\renewcommand{\algorithmiccomment}[1]{\bgroup\hfill//~#1\egroup}
\newlength{\fixboxwidth}
\newtheorem{theorem}{Theorem}[section]
\newtheorem{Property}[theorem]{Property}
\newtheorem{lemma}{Lemma}[section]
\newtheorem{remark}{Remark}[section]
\def\V{\mathfrak{V}}
\def\ve{\varepsilon}
\def\ve{\varepsilon}
\def\R{\mathbb{R}}
\def\MG{\operatorname{\texttt{MG}}}
\def\th{\textrm{th}}
\def\cW{\mathcal{W}}
\def\cV{\mathcal{V}}
\def\cB{\mathcal{B}}
\def\I{\mathcal{I}}
\def\J{\mathcal{J}}
\def\L{\mathcal{L}}
\def\W{\mathfrak{W}}
\def\H{\mathcal{H}}
\def\T{\mathcal{T}}
\def\O{\mathcal{O}}
\def\<{\big\langle}
\def\>{\big\rangle}
\def\Cond{\operatorname{Cond}}
\def\diiv{\operatorname{div}}
\def\dim{{\operatorname{dim}}}
\def\Img{\operatorname{Im}}
\def\Ker{\operatorname{Ker}}
\def\Span{\operatorname{span}}
\def\ds{\displaystyle}
\newcommand{\br}[1]{{(#1)}}
\newcommand{\Abr}[1]{A^{(#1)}}
\newcommand{\Bbr}[1]{B^{(#1)}}
\newcommand{\Vbr}[1]{\V^{(#1)}}
\newcommand{\Lbr}[1]{\Lambda^{(#1)}}
\newtheorem{Theorem}{Theorem}[section]
\newtheorem{Proposition}[Theorem]{Proposition}
\newtheorem{Corollary}[Theorem]{Corollary}
\newtheorem{Remark}[Theorem]{Remark}
\newtheorem{Example}[Theorem]{Example}
\definecolor{redcol}{RGB}{255, 0, 0}
\definecolor{bluecol}{RGB}{0, 0, 180}
\newcommand{\revise}[1]{{\color{black}#1}}
\title{Fast eigenpairs computation\\ with operator adapted wavelets and hierarchical subspace correction}
\date{\today}
\author{Hehu Xie\thanks{{\texttt{hhxie@lsec.cc.ac.cn}}. LSEC, NCMIS, Institute of Computational Mathematics, Academy of Mathematics and Systems
Science, Chinese Academy of Sciences, Beijing 100190, China, and University of Chinese Academy of Sciences, Beijing 100049, China.},\ \
Lei Zhang\footnote{{\texttt{lzhang2012@sjtu.edu.cn}}.
School of Mathematical Sciences, Institute of Natural Sciences, and Ministry of Education Key
Laboratory of Scientific and Engineering Computing (MOE-LSC), Shanghai Jiao Tong University,
800 Dongchuan Road, Shanghai 200240, China.
},\ \
Houman Owhadi\footnote{{\texttt{owhadi@caltech.edu}}. California Institute of Technology, Computing \& Mathematical Sciences,
MC 9-94 Pasadena, CA 91125.
}
}
\begin{document}

\maketitle
\begin{abstract}
We present a method for the fast computation of the eigenpairs of a bijective positive symmetric linear operator $\mathcal{L}$. The method is based on a combination of operator adapted wavelets (gamblets) with  hierarchical subspace correction. First, gamblets provide a raw but fast approximation of the eigensubspaces of $\mathcal{L}$ by block-diagonalizing $\mathcal{L}$ into  sparse and well-conditioned blocks. Next, the
hierarchical subspace correction method,  computes the eigenpairs associated with the Galerkin restriction of $\mathcal{L}$ to a coarse (low dimensional) gamblet subspace, and then,  corrects those eigenpairs by solving a hierarchy of linear problems in the finer gamblet subspaces (from coarse to fine, using multigrid iteration). The proposed algorithm is robust to the presence of multiple (a continuum of) scales and is shown to be of near-linear complexity when $\L$ is an (arbitrary local, e.g.~differential) operator mapping $\mathcal{H}^s_0(\Omega)$ to $\mathcal{H}^{-s}(\Omega)$ (e.g.~an elliptic PDE with rough coefficients).

\vskip0.3cm \textbf{Keywords.} Multiscale eigenvalue problem, gamblet decomposition, multigrid iteration, subspace correction,
numerical homogenization.

\vskip0.2cm \textbf{AMS subject classifications.} 65N30, 65N25, 65L15,
65B99.
\end{abstract}

\section{Introduction}
\label{sec:introduction}

Solving large scale eigenvalue problems is one of the most fundamental and challenging tasks in modern science
and engineering. Although  high-dimensional eigenvalue problems are ubiquitous in physical sciences, data and imaging sciences,
and machine learning, the class of eigensolvers is not as diverse as that of linear solvers (which comprises
many efficient algorithms such as geometric and algebraic multigrid  \cite{Brandt:1973, Hackbusch:1978}, approximate Gaussian elimination \cite{Kyng:2016}, etc.). In particular, eigenvalue problems may involve operators with nonseparable multiple scales, and the nonlinear interplay between those coupled scales and the eigenvalue problem poses significant challenges for numerical analysis and scientific computing \cite{Arnold:2016,CaoCui,ZhangCaoWang,Malqvist2014a}.

Krylov subspace type methods remain the most reliable and efficient tools for large scale eigenproblems, and alternative approaches such as optimization based methods and nonlinear solver based methods have been pursued in the recent years. For example, the Implicitly Restarted Lanczos/Arnoldi Method (IRLM/IRAM) \cite{Sorensen:1997}, the Preconditioned INVerse ITeration (PINVIT) method \cite{Dyakonov:1980,Bramble:1996,Knyazev:1998}, the Locally Optimal Block Preconditioned Conjugate Gradient (LOBPCG) method \cite{Kynazev:2001,Knyazev:2003}, and the Jacobi-Davidson-type techniques \cite{Bai:2000} have been developed. For those state-of-the-art eigensolvers, the efficient application of preconditioning \cite{Knyazev:1998} is often crucial for the faster convergence and the reduction of computation cost, especially for multiscale eigenproblems.

Recently, two-level \cite{Xu:2001, Malqvist2014a} and multilevel \cite{LinXie_2011,LinXie_2012,LinXie_MultiLevel,Xie_JCP,Xie_IMA} correction methods  have been proposed to reduce the complexity of solving eigenpairs associated with low eigenvalues by first solving a coarse mesh/scale approximation, which can then be corrected by solving linear systems (corresponding to linearized eigenvalue problems)  on a hierarchy of finer meshes/scales. Although the multilevel correction approach has been extended to multigrid methods for linear and nonlinear eigenvalue problems \cite{ChenXieXu,LinXie_2011,LinXie_2012,LinXie_MultiLevel,JiaXieXieXu,Xie_JCP,Xie_IMA}, the regularity estimates required for linear complexity do not hold for PDEs with rough coefficients and a naive application of the correction approach to multiscale eigenvalue problems may converge very slowly.
 For two-level methods \cite{Xu:2001} this lack of robustness can be alleviated by numerical homogenization techniques \cite{Malqvist2014a}, e.g., the so-called Localized Orthogonal Decomposition (LOD) method. For multilevel methods,  gamblets \cite{OwhadiMultigrid:2017,  OwhadiScovel:2017,Owhadi2017a, SchaeferSullivanOwhadi17, OwhScobook2018} (operator-adapted wavelets satisfying three desirable properties: scale orthogonality, well-conditioned multi-resolution decomposition, and localization) provide a natural multiresolution decomposition ensuring robustness for multiscale eigenproblems.  \revise{ As described in \cite[Sec.~5.1.3]{OwhScobook2018}, these three properties are analogous to those
required of Wannier functions \cite{kohn1959, wannier1962dynamics}, which can be characterized as
 linear combinations $\chi_i=\sum_{j} c_{i,j} v_j$  of eigenfunctions $v_{j}$ associated  with eigenvalues
$\lambda_{j}$
such that the size of
$c_{i,j}$ is large
 for $\lambda_j$ close to $\lambda_i$ and small otherwise, and such
 that the resulting  linear combinations  $\chi_i$ are concentrated in space. }



The aim of this paper is therefore to design a fast multilevel numerical method for multiscale eigenvalue problems (e.g. for PDEs that may have rough and highly oscillatory coefficients) associated with a bijective positive symmetric linear operator $\L$, by integrating the multilevel correction approach with the gamblet multiresolution decomposition. In this merger, the gamblet decomposition supplies a hierarchy of coarse (sub)spaces for the multilevel correction method. The overall computational cost is that of solving a sequence of linear problems over this hierarchy (using a gamblet based multigrid approach  \cite{OwhadiMultigrid:2017}).  Recently, Hou et. al. \cite{Hou:2018}  proposed to compute the leftmost eigenpairs of a sparse symmetric positive matrix
by combining the implicitly restarted Lanczos method with a gamblet-like multiresolution decomposition where local eigenfunctions  are used as measurement functions. This paper shows that the gamblet multilevel decomposition (1) enhances the convergence rate of  eigenvalue solvers by enabling (through a gamblet based multigrid method) the fast and robust convergence of  inner iterations (linear solves) in the multilevel correction method, and (2) provides efficient preconditioners for state-of-the-art eigensolvers such as the LOBPCG method.


\paragraph{Outline} This paper is organized as follows: We summarize the gamblet decomposition, its properties, and the gamblet based multigrid method in  Section \S~\ref{sec:gamblet} (see \cite{OwhadiMultigrid:2017,  OwhadiScovel:2017,Owhadi2017a, SchaeferSullivanOwhadi17, OwhScobook2018} for the detailed construction). We present the gamblet based multilevel method for multiscale eigenvalue problems and its rigorous analysis in Section \S~\ref{sec:eigenproblem}. Our theoretical results are numerically illustrated
in Section \S~\ref{sec:numerics} where the proposed method is compared with state-of-the-art eigensolvers (such as LOBPCG). 

\paragraph{Notation} The symbol $C$ denotes generic positive constant that may change from one line
of an estimate to the next. $C$ will be independent from the eigenvalues (otherwise a subscript $\lambda$ will be added), and the dependencies of $C$ will normally be clear from the context or stated explicitly.

\section{Gamblet Decomposition and Gamblet based Multigrid Method}
\label{sec:gamblet}
\def\cB{V}

Although multigrid methods \cite{Brandt:1973, Hackbusch:1978} have been highly  successful in solving elliptic PDEs, their convergence rates can be severely affected by the lack of regularity of the PDE coefficients \cite{wan2000}.  Although classical wavelet based methods \cite{Beylkin:1995,  DorobantuEngquist1998} enable a multi-resolution decomposition of the solution space,  their performance can also be affected by their lack of adaptation to the coefficients of the PDE. The introduction of gamblets in \cite{OwhadiMultigrid:2017} addressed the  problem of designing multigrid/multiresolution methods that are provably robust with respect to rough ($L^\infty$) PDE coefficients.

Gamblets are derived from a game theoretic approach to numerical analysis  \cite{OwhadiMultigrid:2017, OwhadiScovel:2017}.  They (1) are elementary solutions of hierarchical information games associated with the process of computing with partial information and limited resources, (2) have a natural Bayesian interpretation under the mixed strategy emerging from the game theoretic formulation, (3) induce a  multi-resolution decomposition of the solution space that is adapted to the numerical discretization of the underlying PDE. The (fast) gamblet transform has $\mathcal{O}(N\log^{2d+1}N)$ complexity for the first solve and $\mathcal{O}(N\log^{d+1}N)$ for subsequent solves  to achieve grid-size accuracy in $H^1$-norm for elliptic problems \cite{OwhScobook2018}.


\subsection{The abstract setting}\label{secset}
We introduce the formulation of gamblets with an abstract setting since its application is not limited to  scalar elliptic problems such as examples \ref{example1} and \ref{example2}. Let $(\cB,\|\cdot\|), (\cB^*,\|\cdot\|_*)$ and $(\cB_0,\|\cdot\|_0)$ be Hilbert spaces such that  $\cB\subset \cB_0 \subset \cB^*$ and such that  the natural embedding $i:\cB_0 \rightarrow \cB^*$ is compact and dense. Let $(V^*,\|\cdot\|_*)$ be the dual of $(V,\|\cdot\|)$ using  the  dual pairing obtained from the  Gelfand triple.

Let the operator $\L$ be a symmetric positive linear bijection mapping $\cB$ to $\cB^*$.
Write $[\cdot,\cdot]$ for the duality pairing between $\cB^*$ and $\cB$ (derived from the Riesz duality between $\cB_0$ and itself) such that
\begin{equation}
\|u\|^2=[\L u, u]\text{ for } u\in \cB\,.
\end{equation}
The corresponding inner product on $\cB$ is defined by
\begin{equation}
	\<u, v\> : = [\L u, v]\quad\text{for }u, v\in \cB,
\end{equation}
and $\|\cdot\|_*$ is the corresponding dual-norm on $\cB^*$, i.e.
\begin{equation}
\|\phi\|_*=\sup_{v\in \cB, v\neq 0} \frac{[\phi, v]}{\|v\|}\text{ for } \phi\in \cB^*\,.
\end{equation}
Given $g\in \cB^*$, we will  consider the solution  $u$ of the variational problem
\begin{equation}
\<u,v\>=[g, v], \quad\text{ for  }v\in\cB\,.
\label{eq:problem}
\end{equation}

\begin{Example}\label{example1}
Let $\Omega$  be a bounded open subset of $\R^d$ (of arbitrary dimension $d\in \mathbb{N}^*$) with uniformly Lipschitz  boundary.  Given $s\in \mathbb{N}$, let
\begin{equation}\label{eqkjdlhejdjhii}
\L\,:\, \H^s_0(\Omega)\rightarrow \H^{-s}(\Omega)
\end{equation}%
be a continuous linear bijection  between $\H^s_0(\Omega)$ and $\H^{-s}(\Omega)$, where $\H^s_0(\Omega)$ is the Sobolev space of order $s$ with zero trace, and $\H^{-s}(\Omega)$ is the topological dual of $\H^s_0(\Omega)$ \cite{Adams:2003}. Assume $\L$ to be symmetric, positive and local, i.e.
$ [\L u,v]=[u, \L v]$ and $[\L u,u]\geq 0$ for $ u, v\in \H^s_0(\Omega)$ and
 $[\L u,v]=0$ if $u,v$ have disjoint supports in $\Omega$.
In this example $\cB, \cB^*$ and $\cB_0$ are $\H^s_0(\Omega)$, $\H^{-s}(\Omega)$ and $L^2(\Omega)$ endowed with the norms
$\|u\|^2=\int_{\Omega} u \L u$, $\|\phi\|_*^2=\int_{\Omega} \phi \L^{-1}\phi$ and $\|u\|_0=\|u\|_{L^2(\Omega)}$.
\end{Example}

\begin{Example}\label{example2}
Consider Example \ref{example1} with $s=1$,  $\L=-\diiv \big(a(x)  \nabla \cdot\big)$  and $a(x)$ is a symmetric, uniformly elliptic $d\times d$ matrix
with entries in $L^\infty(\Omega)$  such that for all $x\in \Omega$ and $\ell\in \R^d$,
\begin{equation}
\lambda_{\min}(a) |\ell|^2 \leq \ell^T a(x) \ell \leq \lambda_{\max}(a)|\ell|^2.
\end{equation}
Note that
\begin{equation}
\|v\|^2=\int_{\Omega}(\nabla v)^T a \nabla v \ \ \text{ for } v\in \H^1_0(\Omega)\,,
\end{equation}
and
the solution of \eqref{eq:problem} is the solution of the  PDE
\begin{equation}\label{eqn:scalar}
\begin{cases}
-\diiv \big(a(x)  \nabla u(x)\big)=g(x) \quad  x \in \Omega, \\
u=0 \quad \text{on}\quad \partial \Omega\,.
\end{cases}
\end{equation}
\end{Example}


\subsection{Gamblets}

Here we give a brief reminder of the construction of gamblets. See Example \ref{example3} for a concrete example for scalar elliptic equation and Section \S~\ref{sec:spe10} for the numerical implementation, and also \cite{OwhadiMultigrid:2017,  OwhadiScovel:2017,Owhadi2017a, SchaeferSullivanOwhadi17, OwhScobook2018} for more details.

\paragraph{Measurement functions.}
Let $\I^{(1)},\ldots,\I^{(q)}$ be a hierarchy of labels and let  $\phi_i^{(k)}$  be a hierarchy of nested elements of $\cB^*$ such that
\begin{equation}\label{eqejhdgdhdld}
\phi_i^{(k)}=\sum_{j\in \I^{(k+1)}}\pi^{(k,k+1)}_{i,j}\phi_j^{(k+1)}\ \ \text{ for }k\in \{1,\ldots,q-1\}\text{ and }i\in \I^{(k)}\,,
\end{equation}
for some rank $|\I^{(k)}|$, $\I^{(k)}\times \I^{(k+1)}$  matrices $\pi^{(k,k+1)}$ and such that the $(\phi_i^{(k)})_{i\in \I^{(k)}}$ are linearly independent and
 $\pi^{(k,k+1)}\pi^{(k+1,k)}=I_{\I^{(k)}}$ for $k\in \{1,\ldots,q-1\}$ (writing  $I_{\J}$ for the $\J\times \J$ identity matrix and $\pi^{(k+1,k)}$ for $(\pi^{(k,k+1)})^T$). Although not required in the general theory of gamblets \cite{OwhScobook2018} in this paper we assume that the $\phi_i^{(k)}$ are elements of $\cB_0$ and have uniformly well conditioned mass matrices in the sense that
 $C^{-1}|x|^2 \leq \|\sum_i x_i \phi_i^{(k)}\|_0^2\leq C |x|^2$ (for all $x$ and $k$).

\paragraph{Operator adapted pre-wavelets.}
 For $k\in \{1,\ldots,q\}$, let $\Theta^{(k)}$ be the symmetric positive definite matrix with entries $\Theta^{(k)}_{i,j}:=[\phi_i^{(k)},\L^{-1}\phi_j^{(k)}]$ and (writing $\Theta^{(k),-1}$ for the inverse of $\Theta^{(k)}$) let
 \begin{equation}
 \psi_i^{(k)}=\sum_{j\in \I^{(k)}} \Theta^{(k),-1}_{i,j} \phi_j^{(k)}\ \ \text{ for } i\in \I^{(k)}\,.
 \end{equation}
The elements $\psi_i^{(k)}$ form a bi-orthogonal system with respect to the elements $\phi_i^{(k)}$, i.e.
$[\phi_j^{(k)},\psi_i^{(k)}]=\delta_{i,j}$ and
\begin{equation}\label{eqjkekkdgddh}
u^{(k)}:=\sum_{i\in \I^{(k)}}[\phi_i^{(k)},u]\psi_i^{(k)}\,,
\end{equation}
is the $\<\cdot,\cdot\>$ orthogonal projection of $u\in \cB$ onto
\begin{equation}
\V^{(k)}:=\Span\{\psi_i^{(k)}\mid i\in \I^{(k)}\}\,.
\end{equation}
Furthermore $A^{(k)}:=\Theta^{(k),-1}$ can be identified as the stiffness matrix of the $\psi_i^{(k)}$, i.e.
\begin{equation}
A^{(k)}_{i,j}=\<\psi_i^{(k)},\psi_j^{(k)}\>\ \ \text{ for }i,j\in \I^{(k)}\,.
\end{equation}
The  $\psi_i^{(k)}$ are nested pre-wavelets in the sense that $\V^{(k)} \subset \V^{(k+1)}$ and
\begin{equation}
\psi_i^{(k)}=\sum_{j\in \I^{(k+1)}}R^{(k,k+1)}_{i,j} \psi_j^{(k+1)}\,,
\end{equation}
where $R^{(k,k+1)}=A^{(k)}\pi^{(k,k+1)}\Theta^{(k+1)}$ acts as an interpolation matrix.

\paragraph{Gamblets (operator adapted wavelets).}
Let $(\J^{(k)})_{2\leq k\leq q}$ be a hierarchy of labels such that (writing $|\J^{(k)}|$ for the cardinal of $\J^{(k)}$)
$|\J^{(k)}|=|\I^{(k)}|-|\I^{(k-1)}|$.
For $k\in \{2,\ldots,q\}$, let $W^{(k)}$ be a $\J^{(k)}\times \I^{(k)}$ matrix such that (writing $W^{(k),T}$ for the transpose of $W^{(k)}$)
\begin{equation}\label{eqkljwhgdjhgdh}
\Ker(\pi^{(k-1,k)})=\Img(W^{(k),T})\text{ and } W^{(k)}W^{(k),T}=I_{\J^{(k)}}\,.
\end{equation}

Define
\begin{equation}
\chi_i^{(k)}:=\sum_{j\in \I^{(k)}} W^{(k)}_{i,j} \psi_j^{(k)}\ \ \text{ $k\in \{2,\ldots,q\}$ and $i\in \J^{(k)}$ }\,.
\end{equation}
Then $u^{(k)} - u^{(k-1)}$ is the  $\<\cdot,\cdot\>$ orthogonal projection of $u\in \cB$ onto
\begin{equation}
\W^{(k)}:=\Span\{\chi_i^{(k)}\mid i\in \J^{(k)}\}\,.
\end{equation}
We will also write $\J^{(1)}:=\I^{(1)}$, $\chi^{(1)}_i:=\psi_i^{(1)}$,  $\W^{(1)}:=\V^{(1)}$.  We call those operator adapted wavelets $\chi_i^{(k)}$, \emph{gamblets}. Furthermore $\W^{(k)}$ is the $\<\cdot,\cdot\>$-orthogonal complement of $\V^{(k-1)}$ in $\V^{(k)}$, i.e.
$
\V^{(k)}=\V^{(k-1)}\oplus \W^{(k)}\,,
$

\begin{equation}\label{eqn:decomposition}
\V^{(q)}=\V^{(1)}\oplus \W^{(2)}\oplus \cdots \oplus \W^{(q)}\,,
\end{equation}
and writing $\W^{(q+1)}$ for the $\<\cdot,\cdot\>$-orthogonal complement of $\V^{(q)}$ in $\cB$, $u=u^{(1)}+(u^{(2)}-u^{(1)})+\cdots+(u^{(q+1)}-u^{(q)})$ is the multiresolution decomposition of $u$ over $V=\V^{(1)}+\W^{(2)}+\cdots +\W^{(q+1)}$, namely, the \emph{gamblet decomposition} of $u$.

For $k\in \{2,\ldots,q\}$, $B^{(k)}= W^{(k)}A^{(k)}W^{(k),T}$ is the stiffness matrix of the $\chi^{(k)}_i$, i.e.
\begin{equation}
B^{(k)}_{i,j}= \<\chi_i^{(k)},\chi_j^{(k)}\> \ \ \text{ for }i,j\in \J^{(k)}\,.
\end{equation}
and $B^{(1)}:=A^{(1)}$.

\paragraph{Quantitative estimates.} 
Under general stability conditions on the $\phi_i^{(k)}$  \cite{OwhadiMultigrid:2017, Owhadi2017a, OwhadiScovel:2017, SchaeferSullivanOwhadi17, OwhScobook2018} these operator adapted wavelets satisfy the  quantitative estimates of Property \ref{property:optimaldecomposition}, we will first state those estimates and provide an example of their validity in the general setting of Example \ref{example1}.

\begin{Property}
\label{property:optimaldecomposition}
 The following properties are satisfied for some constant  $C>0$ and $H\in (0,1)$:
\begin{enumerate}
\item [1] \emph{Approximation}:
\begin{equation}
\| u - u^{\br{k}}\|_{0}  \leq C H^k \|u - u^{\br{k}}\| \text{ for }u\in \cB,
\label{eqn:l2err}
\end{equation}
and
\begin{equation}
\| u - u^{\br{k}}\|   \leq C H^k \|\L u\|_{0} \text{ for }u\in \L^{-1}\cB_0\,.
\label{eqn:h1err}
\end{equation}

\item [2] \emph{Uniform bounded condition number}: Writing $\Cond(B)$ for the condition number of a matrix $B$  we have
 for $k\in \{1, \cdots, q\}$,
\begin{equation}\label{eqcond}
C^{-1}H^{-2(k-1)}I_{\J^{\br{k}}}\leq B^{(k)} \leq CH^{-2k}I_{\J^{\br{k}}}\text{ and }\Cond(\Bbr{k})\leq CH^{-2}\,.
\end{equation}
and
\begin{equation}
C^{-1}I_{\I^{\br{k}}}\leq A^{(k)} \leq CH^{-2k}I_{\I^{\br{k}}}\,.
\label{eqn:condA}
\end{equation}
\item [3] \emph{Near linear complexity}: The wavelets $\psi_i^{(k)}, \chi_i^{(k)}$ and stiffness matrices $A^{(k)}, B^{(k)}$ can be computed to precision $\varepsilon$ (in $\|\cdot\|$-energy norm for elements of $\cB$ and in Frobenius norm for matrices) in $\operatorname{O}(N \operatorname{polylog} \frac{N}{\varepsilon})$ complexity.
\end{enumerate}
\end{Property}

\begin{figure}[H]
	\begin{center}
			\includegraphics[width=\textwidth]{./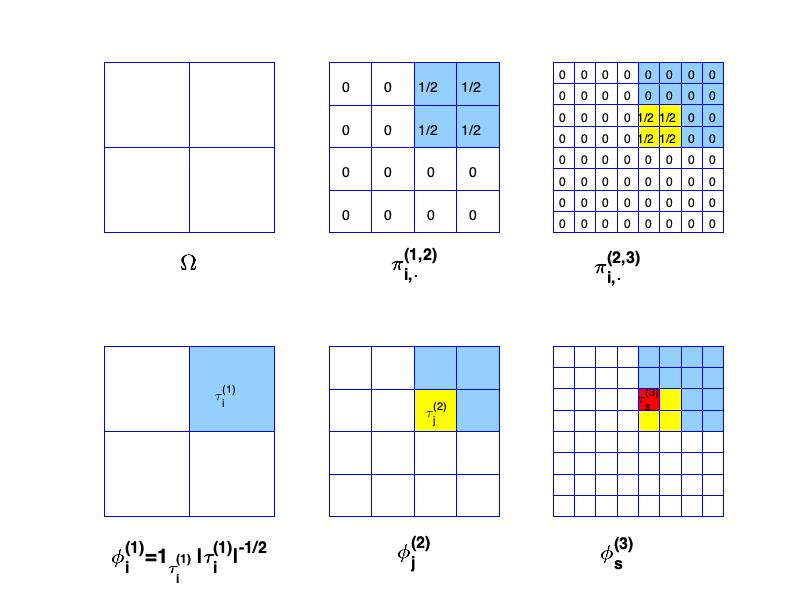}
		\caption{Nested partition of $\Omega=(0,1)^2$ such that the $k$th level corresponds to a uniform partition of $\Omega$ into $2^{-k}\times 2^{-k}$ squares. The top row shows the entries of $\pi^{(1,2)}_{i,\cdot}$ and $\pi^{(2,3)}_{j,\cdot}$. The bottom row shows the support of $\phi_i^{(1)}, \phi_j^{(2)}$ and $\phi_s^{(3)}$. Note that  $j^{(1)}=s^{(1)}=i$ and $s^{(2)}=j$.
}\label{figphipi}
	\end{center}
\end{figure}

\begin{Example}\label{example3}
Consider Example \ref{example1}. Let  $\I^{(q)}$ be the finite set of $q$-tuples of the form $i=(i_1,\ldots,i_q)$.  For $1\leq k<q$ and a $r$-tuple of the form $i=(i_1,\ldots,i_q)$, write $i^{(k)}:=(i_1,\ldots,i_k)$.
For $1\leq k \leq q$ and $i=(i_1,\ldots,i_q)\in \I^{(q)}$,  write $\I^{(k)}:=\{i^{(k)}\,:\, i\in \I^{(q)}\}$.
Let $\delta, h\in (0,1)$.
   Let  $(\tau_i^{(k)})_{i\in \I^{(k)}}$ be  uniformly Lipschitz convex sets forming a nested partition of $\Omega$, i.e. such that
$
   \Omega=\cup_{i\in \I^{(k)}}\tau_i^{(k)},\quad k \in \{1,\ldots, q\}
$
    is a disjoint union except for the boundaries,  and
$\tau_i^{(k)}=\cup_{j\in \I^{(k+1)}: j^{(k)}=i}\tau_j^{(k+1)}, \quad  k \in \{1,\ldots, q-1\}$.
    Assume that each $\tau_i^{(k)}$,  contains a ball  of center $x_i^{(k)}$ and radius $\delta h^k$,
 and is contained in the  ball of center $x_i^{(k)}$ and radius $\delta^{-1} h^k$. Writing $|\tau^{(k)}_i|$ for the volume of $\tau^{(k)}_i$, take
 \begin{equation}
\phi_i^{(k)}:=1_{\tau^{(k)}_i} |\tau^{(k)}_i|^{-\frac{1}{2}}\,.
\end{equation}
The nesting relation \eqref{eqejhdgdhdld} is then satisfied with
$\pi^{(k,k+1)}_{i,j}:=
|\tau^{(k+1)}_j|^{\frac{1}{2}}|\tau^{(k)}_i|^{-\frac{1}{2}}$ for $j^{(k)}=i$ and $\pi^{(k,k+1)}_{i,j}:=0$ otherwise.
Observe also that $\|\sum_i x_i \phi_i^{(k)}\|_{L^2(\Omega)}^2=|x|^2$.
For $i:=(i_1,\ldots,i_{k+1})\in \I^{(k+1)}$
write $i^{(k)}:=(i_1,\ldots,i_k)\in \I^{(k)}$ and
note that
$\pi^{(k,k+1)}$ is cellular in the sense that $\pi^{(k,k+1)}_{i,j}=0$ for $j^{(k)}\not=i$.
Choose $(\J^{(k)})_{2\leq k\leq q}$ to be  a finite set of $k$-tuples of the form $j=(j_1,\ldots,j_k)$ such that $j^{(k-1)}:=(j_1,\ldots,j_{k-1})\in \I^{(k-1)}$ and
$|\J^{(k)}|=|\I^{(k)}|-|\I^{(k-1)}|$. See Figure \ref{figphipi} for an illustration.
 Choose  $W^{(k)}$ as in \eqref{eqkljwhgdjhgdh}
and cellular in the sense that $W^{(k)}_{i,j}=0$ for $i^{(k-1)}\not= j^{(k-1)}$ (see \cite{OwhadiMultigrid:2017, OwhadiScovel:2017,  OwhScobook2018, Owhadi2017a} for examples).
\eqref{eqn:decomposition} then corresponds to a multi-resolution decomposition of $\H^s_0(\Omega)$ that is adapted to the operator $\L$.
\end{Example}

We have the following theorem \cite{OwhadiScovel:2017,  OwhScobook2018,SchaeferSullivanOwhadi17}.

\begin{Theorem}\label{thmgam}
The  properties in Property \ref{property:optimaldecomposition} are satisfied
for Examples \ref{example1} and \ref{example3} with $H=h^s$ and a constant $C$ depending only on
$\delta, \Omega, d, s$,
\begin{equation}
\label{eqkdjhekdfhfu}
 \|\L\|:=\sup_{u\in \H^s_0(\Omega)} \frac{\|\L u\|_{\H^{-s}(\Omega)}}{ \|u\|_{\H^s_0(\Omega)}}\ \text{ and }\
\|\L^{-1}\|:=\sup_{u\in \H^s_0(\Omega)} \frac{ \|u\|_{\H^s_0(\Omega)}}{\|\L u\|_{\H^{-s}(\Omega)}}\,.
\end{equation}
Furthermore, the wavelets $\chi_i^{(k)}$ and $\psi_i^{(k)}$ are exponentially localized, i.e.
\begin{equation}
\|\psi_i^{(k)}\|_{\H^s(\Omega\setminus B(x_i^{(k)},nh^k))} \leq  C h^{-sk}e^{-n/C}\text{ and }\|\chi_i^{(k)}\|_{\H^s(\Omega\setminus B(x_i^{(k)},nh^{k-1}))} \leq  C h^{-sk}e^{-n/C}\,,
\label{eqn:decay}
\end{equation}
 and
the wavelets $\psi_i^{(k)}, \chi_i^{(k)}$ and stiffness matrices $A^{(k)}, B^{(k)}$ can be computed to precision $\varepsilon$ (in $\|\cdot\|$-energy norm for elements of $\cB$ and in Frobenius norm for matrices)
 in $\operatorname{O}(N \log^{2d+1} \frac{N}{\varepsilon})$ complexity \cite{OwhScobook2018}.
\end{Theorem}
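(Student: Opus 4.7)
The theorem bundles four separate assertions---the two approximation bounds, the spectral bounds on $A^{(k)}$ and $B^{(k)}$, the exponential localization, and the complexity estimate---and I would establish them in that order since each rests on the preceding ones.

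\textbf{Approximation.} Since $u^{(k)}$ is the $\langle\cdot,\cdot\rangle$-orthogonal projection of $u$ onto $\V^{(k)}$, the formulas $\psi_j^{(k)}=\sum_i\Theta^{(k),-1}_{j,i}\phi_i^{(k)}$ combined with the bi-orthogonality $[\phi_i^{(k)},\psi_j^{(k)}]=\delta_{i,j}$ force the residual $w:=u-u^{(k)}$ to satisfy $[\phi_i^{(k)},w]=0$ for every $i\in\I^{(k)}$, i.e.\ $w$ has zero weighted averages on all cells $\tau_i^{(k)}$. A Bramble--Hilbert / Poincaré argument on each uniformly Lipschitz cell (of diameter $\leq\delta^{-1}h^k$) then yields $\|w\|_{L^2(\Omega)}\leq C h^{sk}\|w\|_{\H^s_0(\Omega)}$, which is \eqref{eqn:l2err} with $H=h^s$. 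For \eqref{eqn:h1err} I would apply Aubin--Nitsche duality: $\|w\|^2=\langle u,w\rangle=[\L u,w]=[\L u-P^{(k)}\L u,w]$, where $P^{(k)}$ denotes $L^2$-projection onto $\Span\{\phi_i^{(k)}\}$, and then combine the first estimate with the $L^2$-approximation bound $\|\L u-P^{(k)}\L u\|_0\leq C h^{sk}\|\L u\|_0$.

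\textbf{Condition numbers.} Using $A^{(k)}=(\Theta^{(k)})^{-1}$ and the identity $c^T\Theta^{(k)} c=\|\sum_i c_i\phi_i^{(k)}\|_*^2$, the lower bound $A^{(k)}\geq C^{-1} I$ follows from $\|\phi\|_*\leq C\|\phi\|_0$ (a consequence of $\|\L^{-1}\|<\infty$) together with the assumed mass stability $\|\sum_i c_i\phi_i^{(k)}\|_0\leq C|c|$. The reverse bound $A^{(k)}\leq Ch^{-2sk}I$ is the more delicate Bernstein-type estimate, which I would derive by constructing, for each $c$, a test element $v\in\H^s_0(\Omega)$ (via the approximation property of item~1) realizing $\|\sum_i c_i\phi_i^{(k)}\|_*\geq C^{-1}h^{sk}|c|$. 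The bounds on $B^{(k)}$ then follow from the scale-orthogonality $\W^{(k)}\perp\V^{(k-1)}$: any $u\in\W^{(k)}$ satisfies $[\phi_j^{(k-1)},u]=0$ for all $j\in\I^{(k-1)}$, so the approximation estimate at level $k-1$ upgrades to $\|u\|_0\leq C h^{s(k-1)}\|u\|$, giving the lower bound $B^{(k)}\geq C^{-1} h^{-2s(k-1)}I$, while the upper bound is inherited from $A^{(k)}$ through $B^{(k)}=W^{(k)}A^{(k)}W^{(k),T}$ and the isometry $W^{(k)}W^{(k),T}=I$.

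\textbf{Exponential localization and complexity.} This is the technical heart of the argument and the main obstacle. Each gamblet $\psi_i^{(k)}$ admits the variational characterization as the unique minimizer of $\|v\|^2$ over $v\in\cB$ subject to $[\phi_j^{(k)},v]=\delta_{i,j}$ for all $j\in\I^{(k)}$. Given $i\in\I^{(k)}$ and a radius $r=nh^k$, I would construct a competitor $\tilde v$ that agrees with $\psi_i^{(k)}$ outside $B(x_i^{(k)},r)$, vanishes on $B(x_i^{(k)},r-Ch^k)$, and is glued in the intervening annular layer so as to preserve all constraints $[\phi_j^{(k)},\tilde v]=\delta_{i,j}$---this is the step where locality of $\L$ and the cellular nesting of the $\phi_j^{(k)}$ are essential, and where the bookkeeping is most delicate. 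Minimality of $\psi_i^{(k)}$ then forces the energy on a thin shell to dominate a fixed fraction of the energy on a larger concentric shell; iterating this shell estimate across $\sim n$ concentric shells yields geometric decay, hence \eqref{eqn:decay}, with rate depending only on $s,d,\delta,\|\L\|,\|\L^{-1}\|$. The analogous argument applied to the variational characterization of $\chi_i^{(k)}$ inside $\W^{(k)}$ handles the wavelet case. Finally, the complexity bound follows by truncating each $\psi_i^{(k)},\chi_i^{(k)}$ to a ball of radius $O(h^k\log(N/\varepsilon))$: the resulting local linear problems involve $O(\log^d(N/\varepsilon))$ degrees of freedom each, and summing over all cells and all scales, combined with the polylogarithmic condition number needed to solve each to precision $\varepsilon$, yields the claimed $O(N\log^{2d+1}(N/\varepsilon))$ aggregate cost.
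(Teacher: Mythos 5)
First, a point of reference: the paper itself gives no proof of Theorem \ref{thmgam} --- it is imported wholesale from \cite{OwhadiScovel:2017, OwhScobook2018, SchaeferSullivanOwhadi17} --- so you are reconstructing several chapters of external work rather than an argument contained in this paper. Your outline does follow the standard route of those references (orthogonality of the residual to the measurement functions plus a Poincar\'e-type inequality for the approximation bounds; Riesz stability of the $\phi_i^{(k)}$ in $\cB_0$ and $\cB^*$ for the conditioning of $A^{(k)}$ and $B^{(k)}$; the constrained-energy-minimization characterization of $\psi_i^{(k)}$ together with an iterated cutoff argument on concentric annuli for the exponential decay; and truncation to balls of radius $O(h^k\log\frac{N}{\varepsilon})$ for the complexity), so at the level of architecture it is sound.

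There is, however, a concrete gap in your first step when $s\geq 2$. The residual $w=u-u^{(k)}$ only has vanishing \emph{averages} on the cells $\tau_i^{(k)}$, and a cell-by-cell Bramble--Hilbert argument from that single constraint yields only $\|w\|_0\leq Ch^{k}\|w\|_{\H^1}$, not $\|w\|_0\leq Ch^{sk}\|w\|_{\H^s}$: locally, a nonzero affine function with zero average has vanishing second and higher derivatives, so no local estimate of the claimed order can follow from the zero-average constraint alone. The rate $h^{sk}$ is recovered globally, e.g.\ by combining the first-order Poincar\'e bound with the interpolation inequality $\|w\|_{\H^1}\leq C\|w\|_{0}^{1-1/s}\|w\|_{\H^s}^{1/s}$ and absorbing, which is how the cited references handle indicator measurement functions. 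The same issue propagates into your duality step: the displayed inequality $\|\L u-P^{(k)}\L u\|_0\leq Ch^{sk}\|\L u\|_0$ is false as written ($L^2$-projection onto piecewise constants is only first-order accurate), and what is actually needed is a bound on the $\H^{-s}$-norm of $\L u-P^{(k)}\L u$, obtained by pairing against $w-P^{(k)}w$ and reusing the corrected $L^2$ estimate. The remaining parts of your sketch are directionally correct but remain descriptions rather than proofs --- in particular the lower bound $B^{(k)}\geq C^{-1}H^{-2(k-1)}I_{\J^{(k)}}$ still needs the $L^2$-Riesz stability $\|\sum_i c_i\chi_i^{(k)}\|_0^2\geq C^{-1}|c|^2$, which you use implicitly, and the constraint-preserving gluing in the localization step is precisely where the bulk of the technical work in \cite{OwhScobook2018} lives.
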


\begin{Remark}
Rigorous exponential decay/localization results such as \eqref{eqn:decay} have been pioneered in
\cite{MaPe:2012} for the localized orthogonal decomposition (LOD)  basis functions.
Although gamblets are derived from a  different perspective (namely, a game theoretic approach),
 from the numerical point of view, gamblets can be seen as a multilevel generalization of optimal recovery splines \cite{micchelli1977survey} and of numerical homogenization basis functions such as RPS (rough polyharmonic splines) \cite{OwhadiZhangBerlyand:2014} and variational multiscale/LOD basis functions \cite{hughes1998variational, MaPe:2012}.
\end{Remark}

\begin{Remark}
For Examples \ref{example1} and \ref{example3},
the wavelets $\psi_i^{(k)}, \chi_i^{(k)}$ and stiffness matrices $A^{(k)}, B^{(k)}$ can also be computed in $\operatorname{O}(N \log^{2d} \frac{N}{\varepsilon})$ complexity using the incomplete Cholesky factorization approach of \cite{SchaeferSullivanOwhadi17}.
\end{Remark}

\paragraph{Discrete case}
From now on we will consider the situation where $\cB$ is finite-dimensional and
$\V^{(q)}=\cB$. In the setting of Example  \ref{example1} we will identify $\cB$ with the linear space spanned by the finite-elements $\tilde{\psi}_i$ (e.g.~piecewise linear or bi-linear tent functions on a fine mesh/grid in the setting of Example \ref{example2}) used  to discretize the operator $\L$, use $\I^{(q)}$ to label the elements $\tilde{\psi}_i$ and set $\psi_i^{(q)}=\tilde{\psi}_i^{(q)}$ for $i\in \I^{(q)}$.
The gamblet transform \cite{OwhadiMultigrid:2017,  OwhadiScovel:2017,  OwhScobook2018} is then summarized in Algorithm \ref{alggtphiior} and we have the decompsoition
\begin{equation}\label{eqn:decompositionexacter}
\cB=\W^{(1)}\oplus \W^{(2)}\oplus \cdots \oplus \W^{(q)}\,.
\end{equation}

\begin{algorithm}[H]
\caption{The Gamblet Transform.}\label{alggtphiior}
\begin{algorithmic}[1]
\STATE\label{step3g} $\psi^{(q)}_i= \tilde{\psi}_i$
\STATE\label{step5g} $A^{(q)}_{i,j}= \< \psi_i^{(q)}, \psi_j^{(q)}\>$
\FOR{$k=q$ to $2$}
\STATE\label{step7g} $B^{(k)}= W^{(k)}A^{(k)}W^{(k),T}$
\STATE\label{step9g}  $\chi^{(k)}_i=\sum_{j \in \I^{(k)}} W_{i,j}^{(k)} \psi_j^{(k)}$
\STATE\label{step12g} $ R^{(k-1,k)}=\pi^{(k-1,k)} (I^{(k)}-A^{(k)} W^{(k),T}B^{(k),-1}
W^{(k)})$
\STATE\label{step13g} $A^{(k-1)}= R^{(k-1,k)}A^{(k)}R^{(k,k-1)}$
\STATE\label{step14g} $\psi^{(k-1)}_i=\sum_{j \in  \I^{(k)}} R_{i,j}^{(k-1,k)} \psi_j^{(k)}$
\ENDFOR
\end{algorithmic}
\end{algorithm}

\paragraph{Fast gamblet transform}
The acceleration of Algorithm \ref{alggtphiior} to
$\operatorname{O}(N \log^{2d+1} \frac{N}{\varepsilon})$ complexity is based on the truncation and localization of the computation of the interpolation matrices $R^{(k,k+1)}$ that is enabled by the exponential decay of  gamblets and the uniform bound on $\Cond(B^{(k)})$.
In the setting of Examples \ref{example1} and \ref{example3}, this acceleration is equivalent to localizing the computation of each gamblet $\psi_i^{(k)}$ to a sub-domain centered on $\tau_i^{(k)}$ and of diameter $\mathcal{O}( H^k \log \frac{1}{H^k})$.
We refer to
\cite{OwhadiMultigrid:2017,  OwhadiScovel:2017,  OwhScobook2018} for a detailed description of this acceleration.

\paragraph{Higher order problems}
Although the local linear elliptic operators of Example \ref{example1} are used as prototypical examples,
the proposed theory and algorithms is presented in the abstract setting of linear operators on Hilbert spaces to not only emphasize
the generality of the proposed method (which could also be applied  to Graph Laplacians with well behaved gamblets) but also to clarify/simplify its application  to higher order eigenvalue problems. For such applications the method is directly applied to the SPD matrix representation $A$ of the discretized  operator as described in \cite[Chap.~21]{OwhScobook2018}. The identification of level $q$ gamblets $\psi_i^{(q)}$ in Step \ref{step3g} of Algorithm \ref{alggtphiior} with the finite elements $\tilde{\psi}_i$ used to discretize the operator is, when the gamblet transform is applied to the SPD matrix $A$,  equivalent to the identification of level $q$ gamblets $\psi_i^{(q)}$  with  the unit vectors of $\R^N$. The nesting matrices $\pi^{(k-1,k)}$ remain those associated with the Haar pre-wavelets of Example \ref{example3} (the algorithm does not require the explicit input of measurement functions, only those nesting matrices are used as inputs and they remain unchanged). Of course fine scale finite elements $\tilde{\psi}_i$ (used to discretize the operator) have to be of sufficient accuracy for the approximation of the required eigenpairs (see \cite{brenner2014c0, zhang2017multi} and references therein for further discussion of the discretization issue).

\subsection{Gamblet based Multigrid Method}

The gamblet decomposition enables the construction of efficient multigrid solvers and preconditioners. Suppose we have computed the decomposition \eqref{eqn:decompositionexacter}, the stiffness matrices $A^{(k)}$ and interpolation matrices $R^{(k-1,k)}$ in Algorithm \ref{alggtphiior}, or more precisely their numerical approximations using the fast gamblet transform \cite{OwhadiMultigrid:2017,  OwhadiScovel:2017, SchaeferSullivanOwhadi17, OwhScobook2018}, to a degree that is sufficient to obtain grid-size accuracy in the resolution of the discretization of \eqref{eq:problem}.
We will write $R^{(k,k-1)}:=(R^{(k-1,k)})^T$ for the restriction matrix associated with the interpolation matrix $R^{(k-1,k)}$.

For $g^{(k)}\in \R^{\I^{(k)}}$ consider the linear system
\begin{equation}\label{eqjhhkhguyguyyj}
\Abr{k} z = g^{(k)}.
\end{equation}
Algorithm \ref{alg:multigrid} provides a multigrid approximation $\MG(k, z_0, g^{(k)})$ of the solution $z$ of
\eqref{eqjhhkhguyguyyj} based on an initial guess $z_0$ and a number of iterations $k$.
In that algorithm,  $m_1$ and $m_2$ are nonnegative integers (and $p=1$ or $2$.  $p=1$
corresponds to a $\cV$-cycle method and $p=2$ corresponds to a $\cW$-cycle method). $\Lbr{k}$ is an upper bound for the spectral radius of $\Abr{k}$. Under Condition \ref{property:optimaldecomposition} we take $\Lbr{k}= C H^{-2k}$ where $C$
and $H$ are the constants appearing in the bound $A^{(k)} \leq CH^{-2k}I_{\I^{\br{k}}}$.

\begin{Remark}
We use the simple Richardson iteration in the smoothing step of Algorithm \ref{alg:multigrid}. In practice, Gauss-Seidel and CG can also be used as a smoother.
\end{Remark}

\begin{Remark}\label{Computational_Work_Proposition}
The number of operations  required in the $k$-th level iteration defined by Algorithm \ref{alg:multigrid}
is $\ds\O(N_k(\log \frac{N_k}{\ve})^{2d+1})$, where $N_k:=\textrm{dim}(\V^{\br{k}})$.
\end{Remark}

\begin{algorithm}[H]
\caption{Gamblet based Multigrid ($k$-th Level Iteration)}\label{alg:multigrid}
For $k=1$, $\MG(1, z_0, g^{(1)})$ is the solution obtained from a direct method. Namely
\begin{eqnarray}
A^{(1)}\MG(1, z_0, g^{(1)}) = g^{(1)}.
\end{eqnarray}
For $k>1$, $\MG(k, z_0, g^{(k)})$ is obtained recursively in three steps,
\begin{enumerate}
\item Presmoothing: For $1\leq \ell \leq m_1$, let
\begin{eqnarray}
z_\ell = z_{\ell-1} + \frac{1}{\Lbr{k}} (g^{(k)}-\Abr{k} z_{\ell-1}),
\end{eqnarray}
\item Error Correction: Let $g^{(k-1)} := R^{(k-1, k)}(g^{(k)}-\Abr{k} z_0)$ and $q_0^{(k-1)} = 0$. For $1\leq i\leq p$, let
\begin{eqnarray}
q_i^{(k-1)} = \MG(k-1, q_{i-1}^{(k-1)}, g^{(k-1)}).
\end{eqnarray}
Then $z_{m_1+1} := z_{m_1} + R^{(k, k-1)}q_p^{(k-1)}$.
\item Postsmoothing: For $m_1+2\leq \ell \leq m_1+m_2 +1$, let
\begin{eqnarray}
z_\ell = z_{\ell-1} + \frac{1}{\Lbr{k}} (g^{(k)}-\Abr{k} z_{\ell-1}).
\end{eqnarray}
\end{enumerate}
Then the output of the $k^{\th}$ level iteration is
\begin{eqnarray}
\MG(k, z_0, g^{(k)}) : = z_{m_1+m_2+1}.
\end{eqnarray}
\end{algorithm}

%

Items 1 and 2 of Property \ref{property:optimaldecomposition} (i.e. the bounds on approximation errors and condition numbers)  imply the following result of $\cV$-cycle convergence.


\begin{Theorem}[Convergence of the $k^{\th}$ Level Iteration]\label{Convergence_MG_k_Theorem}
Let $m_1=m_2=m/2$, and $k$ be the level number of Algorithm \ref{alg:multigrid}, and $p=1$.
For any $0<\theta<1$, there exists  $m$ independent from $k$ such that
\begin{equation}
\|z-\MG(k, z_0, g)\|_{A^{(k)}} \leq \theta \|z-z_0\|_{A^{(k)}}, 
\label{eqn:vcycleconvergence}
\end{equation}
\end{Theorem}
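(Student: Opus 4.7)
I plan to follow the classical symmetric multigrid convergence theory of Hackbusch~\cite{Hackbusch:1978}, which reduces a level-independent V-cycle contraction to two level-wise ingredients: a \emph{smoothing property} for the Richardson iteration and a \emph{strong approximation property} for the two-grid coarse correction, both with constants independent of $k$. Property~\ref{property:optimaldecomposition} is designed to supply exactly these two ingredients, and an induction on $k$ then delivers~\eqref{eqn:vcycleconvergence}.

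\textbf{Smoothing property.} Because $\Lbr{k}\geq\rho(\Abr{k})$ (as guaranteed by~\eqref{eqn:condA}), the Richardson propagator $S^{(k)}:=I-\Abr{k}/\Lbr{k}$ is symmetric with spectrum in $[0,1]$. A one-line spectral maximization of $\lambda(1-\lambda/\Lbr{k})^{2m}$ over $\lambda\in\sigma(\Abr{k})$ gives, for every error vector $e$ and every $m\geq1$,
\[
\bigl\|\Abr{k}(S^{(k)})^{m}e\bigr\|_{\ell^{2}}^{2}\;\leq\;\frac{\Lbr{k}}{2m+1}\,\|e\|_{\Abr{k}}^{2},
\]
which is the form of smoothing property I will use in both the pre- and post-smoothing halves of the V-cycle.

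\textbf{Approximation property.} Let $P_{k-1}$ denote the $\Abr{k}$-orthogonal projection of $\R^{\I^{(k)}}$ onto the coarse subspace associated with $\V^{(k-1)}\subset \V^{(k)}$. Under the identification $v\in\R^{\I^{(k)}}\leftrightarrow u=\sum_{i}v_{i}\psi_{i}^{(k)}\in \V^{(k)}$, the matrix $I-P_{k-1}$ acts as the functional coarsening $u\mapsto u-u^{(k-1)}$ of~\eqref{eqjkekkdgddh}, so that $\|(I-P_{k-1})v\|_{\Abr{k}}=\|u-u^{(k-1)}\|$. Combining~\eqref{eqn:l2err}, \eqref{eqn:h1err} and the uniform mass-matrix equivalence postulated for the measurement functions $\phi_{i}^{(k)}$ at the start of Section~\ref{secset}, I will establish the strong approximation property
\[
\|(I-P_{k-1})v\|_{\Abr{k}}^{2}\;\leq\;\frac{C}{\Lbr{k}}\,\|\Abr{k}v\|_{\ell^{2}}^{2}
\]
with $C$ independent of $k$. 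The crucial cancellation is that the factor $H^{2(k-1)}$ arising from squaring~\eqref{eqn:l2err} exactly compensates for the factor $\Lbr{k}\asymp H^{-2k}$ coming from the smoother.

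\textbf{Conclusion by induction.} Multiplying the two inequalities gives the two-grid contraction $\|(I-P_{k-1})(S^{(k)})^{m/2}e\|_{\Abr{k}}\leq\sqrt{C/(2m+1)}\,\|e\|_{\Abr{k}}$, which can be driven below any prescribed $\theta$ by taking $m$ sufficiently large. The V-cycle propagator at level $k$ is then controlled in terms of its counterpart at level $k-1$ by the standard Hackbusch recursion, and a straightforward induction initialized with the exact-solve identity $E_{1}=0$ at the coarsest level yields~\eqref{eqn:vcycleconvergence} with the same $\theta$ at every level. The main obstacle is the approximation property itself: the argument must translate the abstract gamblet estimates \eqref{eqn:l2err}--\eqref{eqn:h1err}, stated in the $\|\cdot\|_{0}$ and $\|\cdot\|$ norms on $\cB_{0}$ and $\cB$, into a matrix inequality involving $\ell^{2}$ norms on coefficient vectors, and it is precisely here that the uniform mass-matrix hypothesis on the $\phi_{i}^{(k)}$ enters in an essential way.
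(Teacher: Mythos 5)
Your proposal follows essentially the same route as the paper: both reduce the level-independent V-cycle contraction to a smoothing property for the damped Richardson iteration (justified by the spectral bound \eqref{eqn:condA}) plus an approximation property for the coarse-grid correction, the latter obtained by identifying the coarse-correction error with $\|u-u^{(k-1)}\|$ and invoking the gamblet estimate \eqref{eqn:h1err} together with the uniform conditioning of the measurement-function mass matrices, with the same key cancellation of $H^{2(k-1)}$ against $\Lambda^{(k)}\asymp H^{-2k}$. The paper packages the two ingredients in the form of \cite[Theorem 3.9]{Olshanskii:2014} rather than Hackbusch's recursion, and its actual argument uses only \eqref{eqn:h1err} (not \eqref{eqn:l2err}), but these are inessential variants of the same classical framework.
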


\begin{proof}
Theorem \ref{Convergence_MG_k_Theorem} follows  from the following
 smoothing  and approximation properties introduced in \cite[Section 3.3.7]{Olshanskii:2014}:
	
	\emph{Smoothing property}: The iteration matrix on every grid level can be written as $S^{(k)} = I - M^{(k),-1} A^{(k)}$, where $M^{(k)}$ is symmetric and satisfies
	\begin{equation}\label{eqjhgugybhbh}
		M^{(k)}  \geq A^{(k)}
	\end{equation}
	
	\emph{Approximation property}: It holds true that
	\begin{equation}\label{eqjkffjhbjhbb}
		\| A^{(k),-1} - R^{(k,k-1)} A^{(k-1),-1}R^{(k-1,k)} \|_2 \leq C \|M^{(k)}\|_2^{-1}
	\end{equation}
	Taking $\ds M^{(k)} = \Abr{k} I^{(k)}=C H^{-2k}$ in Algorithm \ref{alg:multigrid} implies the smoothing property  \eqref{eqjhgugybhbh}  by equation \eqref{eqn:condA} in Property \ref{property:optimaldecomposition}.
There are two approaches to proving the approximation property \eqref{eqjkffjhbjhbb}.
The first one would be to adapt the classical approach as presented in \cite[p.130]{Olshanskii:2014} (in that approach \eqref{eqjkffjhbjhbb} is implied by equations \eqref{eqn:l2err} and \eqref{eqn:h1err} and the it requires the mass matrix of the gamblets to be well conditioned which follows from \cite[Thm.~6.3]{yoo2018noising}). Here we present a second approach.
 First observe that $D:=A^{(k),-1} - R^{(k,k-1)} A^{(k-1),-1}R^{(k-1,k)}$ is symmetric and positive. Indeed using $R^{(k-1,k)}=A^{(k-1)}\pi^{(k-1,k)}A^{(k),-1}$,
we have for $z=A^{(k)}y$, $z^T D z=y^T A^{(k)}y -y^T \pi^{(k,k-1)} A^{(k-1)}\pi^{(k-1,k)} y =\|u^{(k)}-u^{(k-1)}\|^2$ with $u^{(k)}=\sum_{i\in \I^{(k)}} y_i \psi_i^{(k)}$ (see \cite[Prop.~13.30]{OwhScobook2018} for details). Therefore $\|D\|_2=\sup_{|x|=1} x^T D x$. Now take $g=\sum_i x_i \phi_i^{(k)}$ and $u=\L^{-1} g$. Then $x^TA^{(k),-1}x=\|u\|^2$ and $x^T R^{(k,k-1)} A^{(k-1),-1}R^{(k-1,k)} x= \|u^{(k-1)}\|^2$.
Therefore $x^T D x =\|u\|^2-\|u^{(k-1)}\|^2=\|u-u^{(k-1)}\|^2$. Using  equation \eqref{eqn:h1err} in Property \ref{property:optimaldecomposition} we have $\|u-u^{(k-1)}\|^2 \leq C H^{2(k-1)} \|g\|_0^2$.
Using  $\|g\|_0^2\leq C |x|^2$ we deduce that
$\|D\|_2 \leq C H^{2(k-1)} $ which implies the result for $\ds M^{(k)} = \Abr{k} I^{(k)}=C H^{-2k}$ (a factor $H^{-2}$ is absorbed into $C$).

 	We conclude the proof of \eqref{eqn:vcycleconvergence} by applying \cite[Theorem 3.9]{Olshanskii:2014}, and taking $\ds \theta \geq \frac{C}{C+m}$, where $C$ is the constant in \eqref{eqjkffjhbjhbb}.

\end{proof}

\begin{Remark}
The LOD \cite{MaPe:2012} based   Schwarz\index{Schwarz} subspace decomposition/correction method of \cite{KornhuberYserentant16,Kornhuber:2018} also leads to a robust  two-level preconditioning method for PDEs with rough coefficients.
From a multigrid perspective, a multilevel version of \cite{KornhuberYserentant16,Kornhuber:2018}  would be closer to a domain decomposition/additive multigrid method compared to the proposed gamblet multigrid, which is a variant of the multiplicative multigrid method.
\end{Remark}

\section{Gamblet Subspace Correction Method for Eigenvalue Problem}
\label{sec:eigenproblem}

We will now describe the gamblet based multilevel  correction method.
Consider the abstract setting of Section \ref{secset} and write $\<\cdot,\cdot\>_0$ for the scalar product associated with the norm $\|\cdot\|_0$ placed on $\cB_0$. Since $[\cdot,\cdot]$ is the dual product between $\cB^*$ and $\cB$ induced by the Gelfand triple $\cB \subset \cB_0 \subset \cB^*$  we will also write $[u,v]:=\<u,v\>_0$ for $u,v\in \cB_0$.

Consider the  eigenvalue problem:
Find $(\lambda, v )\in \R\times \cB$, such that $\langle v,v\rangle=1$, and
\begin{eqnarray}\label{weak_eigenvalue_problem}
\langle v,w\rangle&=&\lambda  [v,w],\quad \forall w\in \cB.
\end{eqnarray}
The compact embedding property implies that the eigenvalue problem (\ref{weak_eigenvalue_problem})
has a sequence of eigenvalues $\{\lambda_j \}$ (see \cite{BabuskaOsborn_Book,Chatelin}):
$$0<\lambda_1\leq \lambda_2\leq\cdots\leq\lambda_\ell\leq\cdots,\ \ \
\lim_{\ell\rightarrow\infty}\lambda_\ell=\infty,$$
with associated eigenfunctions
$$v_1, v_2, \cdots, v_\ell, \cdots,$$
where $\langle v_i,v_j\rangle=\delta_{i,j}$ ($\delta_{i,j}$ denotes the Kronecker function).
In the sequence $\{\lambda_j\}$, the $\lambda_j$'s are repeated according to their geometric multiplicity.
For our analysis,  recall the following definition for the smallest eigenvalue (see \cite{BabuskaOsborn_Book,Chatelin})
\begin{eqnarray}\label{Smallest_Eigenvalue}
\lambda_1 = \min_{0\neq w\in V}\frac{\langle w,w\rangle}{[w,w]}.
\end{eqnarray}

Define the subspace approximation problem for eigenvalue problem (\ref{weak_eigenvalue_problem})
on  $\V^{\br{k}}$ as follows: Find $(\bar\lambda^{\br{k}}, \bar v^{\br{k}})\in \R\times \V^{\br{k}}$
such that $\langle\bar v^{(k)}, \bar v^{(k)}\rangle=1$ and
\begin{eqnarray}\label{Weak_Eigenvalue_Discrete}
\langle \bar v^{\br{k}},w\rangle
&=&\bar\lambda^{\br{k}} [\bar v^{(k)},w],\quad\ \  \ \forall w\in \V^{\br{k}}.
\end{eqnarray}
From \cite{BabuskaOsborn_1989,BabuskaOsborn_Book,Chatelin},
the discrete eigenvalue problem (\ref{Weak_Eigenvalue_Discrete}) has eigenvalues:
$$0<\bar \lambda_{1}^{\br{k}}\leq \bar \lambda_{2}^{\br{k}}\leq\cdots\leq \bar \lambda_{j}^{\br{k}}
\leq\cdots\leq \bar\lambda_{N_k}^{\br{k}},$$
and corresponding eigenfunctions
$$\bar v_{1}^{\br{k}}, \bar v_{2}^{\br{k}},\cdots, \bar v_j^{\br{k}} \cdots, \bar v_{N_k}^{\br{k}},$$
where $\langle\bar v_i^{\br{k}},\bar v_j^{\br{k}}\rangle=\delta_{i,j}, 1\leq i,j\leq N_k$, and $N_k:=\dim(\V^{(k)})$.

From the min-max principle \cite{BabuskaOsborn_1989,BabuskaOsborn_Book}, we have the following
upper bound result
\begin{eqnarray}\label{Uppero_Bound_Result}
\lambda_i \leq \bar\lambda_i^{(k)},\ \ \ \ 1\leq i\leq N_k.
\end{eqnarray}

Define
\begin{eqnarray}
\eta(\V^{\br{k}})&=&\sup_{f\in \cB_0,\|f\|_0=1}\inf_{w \in \V^{\br{k}}}\|\L^{-1}f-w\|.\label{eta_a_h_Def}
\end{eqnarray}

Let $M(\lambda_i)$ denote the eigenspace corresponding to the
eigenvalue $\lambda_i$, namely,
\begin{eqnarray}
M(\lambda_i):=\big\{v\in \cB\mid \langle v,w\rangle=\lambda_i  [v,w],\quad \forall w\in \cB \big\},
\end{eqnarray}
and define
\begin{eqnarray}
\delta_k(\lambda_i)=\sup_{v\in M(\lambda_i), \|v\|=1}\inf_{w\in\V^{\br{k}}}\|v-w\|.
\end{eqnarray}

\begin{Proposition}\label{propCondition_1}
Property \ref{property:optimaldecomposition} implies
\begin{equation}\label{eqCondition_1}
\eta(\V^{\br{k}})\leq C H^k,\ \ \ \delta_k(\lambda_i)\leq C \sqrt{\lambda_i} H^k\ \ \text{and}\ \ \delta_k(\lambda_i) \leq \sqrt{\lambda_i} \eta(\V^{\br{k}})\,,
\end{equation}
for $k\in \{1,\ldots,q\}$ where $C$ and $H$ are the constants appearing in Property \ref{property:optimaldecomposition}.
\end{Proposition}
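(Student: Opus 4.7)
The plan is to derive all three inequalities directly from the approximation bound \eqref{eqn:h1err} of Property \ref{property:optimaldecomposition}, using the elementary identity that $\|\L u\|_0$ controls the approximation error in $\|\cdot\|$-norm.

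First I would prove the bound on $\eta(\V^{(k)})$. Pick an arbitrary $f \in \cB_0$ with $\|f\|_0 = 1$ and let $u := \L^{-1} f$, so that $u \in \L^{-1}\cB_0$ and $\L u = f$. Using the candidate $w = u^{(k)} \in \V^{(k)}$ from \eqref{eqjkekkdgddh} and applying \eqref{eqn:h1err} gives
\begin{equation*}
\inf_{w\in \V^{(k)}} \|\L^{-1} f - w\| \leq \|u - u^{(k)}\| \leq C H^k \|\L u\|_0 = C H^k.
\end{equation*}
Taking the supremum over $f$ yields $\eta(\V^{(k)}) \leq C H^k$.

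Next I would prove the comparison $\delta_k(\lambda_i) \leq \sqrt{\lambda_i}\,\eta(\V^{(k)})$, from which the remaining estimate $\delta_k(\lambda_i) \leq C\sqrt{\lambda_i}\,H^k$ follows immediately. Fix $v \in M(\lambda_i)$ with $\|v\|=1$. From \eqref{weak_eigenvalue_problem} with $w=v$, $1 = \|v\|^2 = \lambda_i [v,v] = \lambda_i \|v\|_0^2$, so $\|v\|_0 = 1/\sqrt{\lambda_i}$. Identifying $\L v$ with $\lambda_i v \in \cB_0$ (valid since the duality pairing on $\cB^*\times\cB$ extends $\langle\cdot,\cdot\rangle_0$), set $f := \lambda_i v$, so $\L^{-1}f = v$ and $\|f\|_0 = \lambda_i \|v\|_0 = \sqrt{\lambda_i}$. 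Since the map $f \mapsto \inf_{w\in \V^{(k)}} \|\L^{-1}f - w\|$ is positively homogeneous, applying the definition of $\eta$ to $f/\|f\|_0$ yields
\begin{equation*}
\inf_{w\in \V^{(k)}} \|v - w\| = \|f\|_0 \inf_{w\in \V^{(k)}} \bigl\|\L^{-1}(f/\|f\|_0) - w/\|f\|_0\bigr\| \leq \sqrt{\lambda_i}\,\eta(\V^{(k)}).
\end{equation*}
Taking the supremum over unit-norm $v \in M(\lambda_i)$ gives $\delta_k(\lambda_i) \leq \sqrt{\lambda_i}\,\eta(\V^{(k)})$, and combining with the first step yields $\delta_k(\lambda_i) \leq C \sqrt{\lambda_i} H^k$.

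There is no substantive obstacle here; the entire argument rests on recognizing that an eigenfunction $v$ is automatically an element of $\L^{-1}\cB_0$ with $\|\L v\|_0 = \lambda_i \|v\|_0 = \sqrt{\lambda_i}$, which is exactly the quantity that appears on the right-hand side of \eqref{eqn:h1err}. The mildest care required is just to keep the Gelfand-triple identification $[\L v, w] = \langle v, w\rangle_0 \lambda_i$ straight, so that $\L v$ can be treated as the element $\lambda_i v \in \cB_0$ rather than only as an element of $\cB^*$.
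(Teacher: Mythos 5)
Your argument is correct and is precisely the intended one: the paper states Proposition \ref{propCondition_1} without proof, and the standard justification is exactly what you give, namely applying \eqref{eqn:h1err} to $u=\L^{-1}f$ for the bound on $\eta(\V^{\br{k}})$, and observing that an $\|\cdot\|$-normalized eigenfunction $v\in M(\lambda_i)$ satisfies $\L v=\lambda_i v\in\cB_0$ with $\|\L v\|_0=\sqrt{\lambda_i}$, so that the homogeneity of $f\mapsto\inf_{w\in\V^{\br{k}}}\|\L^{-1}f-w\|$ yields $\delta_k(\lambda_i)\leq\sqrt{\lambda_i}\,\eta(\V^{\br{k}})$ and hence $\delta_k(\lambda_i)\leq C\sqrt{\lambda_i}H^k$. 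No gaps; the only care needed is the Gelfand-triple identification of $\L v$ with $\lambda_i v$, which you handle explicitly.
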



In order to provide the error estimate for the numerical scheme (\ref{Weak_Eigenvalue_Discrete}), we define the corresponding projection operator $\mathcal P_k$ as follows
\begin{eqnarray}\label{FEM_Projection}
\langle \mathcal P_k u, w\rangle = \langle u, w\rangle,\ \ \ \ \forall w\in \V^{\br{k}}.
\end{eqnarray}
It is obvious that
\begin{eqnarray*}
\|u-\mathcal P_k u\| = \inf_{w\in\V^{\br{k}}}\|u-w\|.
\end{eqnarray*}

The following Rayleigh quotient expansion of the eigenvalue error is a useful tool
to obtain error estimates for eigenvalue approximations.

\begin{lemma}(\cite{BabuskaOsborn_1989})\label{Rayleigh_quotient_expansion_lem}
Assume $(\lambda,v)$ is an eigenpair for the eigenvalue problem (\ref{weak_eigenvalue_problem}). Then for any $w \in V\backslash\{0\}$,
the following expansion holds:
\begin{equation}\label{Rayleigh_quotient_expansion}
\frac{\langle w,w\rangle}{[w,w]} - \lambda = \frac{\langle w-u,w-u\rangle}{[w,w]} - \lambda\frac{[w-u,w-u]}{[w,w]},\ \ \ \forall u\in M(\lambda).
\end{equation}
\end{lemma}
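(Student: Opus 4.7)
The plan is to prove the identity by expanding both sides and using the eigenvalue relation satisfied by $u\in M(\lambda)$. Since $u$ is an eigenfunction for $\lambda$, I would exploit the defining property $\langle u, w\rangle = \lambda [u, w]$ for all $w\in \cB$, which in particular gives $\langle u, u\rangle = \lambda [u, u]$ and $\langle w, u\rangle = \lambda [w, u]$ for any test vector $w$.

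First I would expand the bilinear forms appearing on the right-hand side by bilinearity:
\begin{equation*}
\langle w-u, w-u\rangle = \langle w,w\rangle - 2\langle w, u\rangle + \langle u,u\rangle,
\end{equation*}
\begin{equation*}
[w-u, w-u] = [w,w] - 2[w,u] + [u,u].
\end{equation*}
Subtracting $\lambda$ times the second expansion from the first, the cross terms combine into $-2(\langle w,u\rangle - \lambda[w,u])$ and the pure $u$-terms into $\langle u,u\rangle - \lambda[u,u]$. Both of these vanish by the eigenvalue identity, leaving
\begin{equation*}
\langle w-u, w-u\rangle - \lambda[w-u, w-u] = \langle w,w\rangle - \lambda[w,w].
\end{equation*}
Dividing through by $[w,w]\neq 0$ yields exactly the claimed expansion \eqref{Rayleigh_quotient_expansion}.

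There is no real obstacle here; the key is simply to use the symmetry of $\langle\cdot,\cdot\rangle$ and $[\cdot,\cdot]$ together with the eigenvalue equation tested against $u$ and against $w$. The only point requiring a small remark is that \eqref{weak_eigenvalue_problem} is stated for unit-norm eigenfunctions, but the defining property $\langle u,\phi\rangle = \lambda[u,\phi]$ (for all $\phi\in\cB$) extends by linearity to every $u\in M(\lambda)$ regardless of normalization, which is what we need since the lemma quantifies over all $u\in M(\lambda)$. The hypothesis $w\neq 0$ is used only to guarantee $[w,w]>0$ so that the division is legitimate, which follows from the positivity of $[\cdot,\cdot]=\langle\cdot,\cdot\rangle_0$ on $\cB_0$.
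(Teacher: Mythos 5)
Your argument is correct: the identity follows by bilinearity and symmetry, with the cross terms and pure $u$-terms cancelling via the eigenvalue relation $\langle u,\phi\rangle=\lambda[u,\phi]$, and the division is justified since $[w,w]>0$ for $w\neq 0$. Note that the paper does not supply its own proof of this lemma (it is stated with a citation to Babu\v{s}ka--Osborn), so there is nothing to compare against; your self-contained verification is the standard one and fills that gap cleanly.
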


For simplicity we will from now on restrict the presentation to the identification of a simple eigenpair $(\lambda,v)$ (the numerical method and results  can naturally be extended to multiple eigenpairs).
Let $E\,:\, \cB \rightarrow M(\lambda_i)$ be the spectral projection operator \cite{BabuskaOsborn_1989} defined by
\begin{eqnarray}\label{Spectral_Projection_Operator}
E=\frac{1}{2\pi \textrm{i}}\int_{\Gamma}\big(z-\L\big)^{-1}dz,
\end{eqnarray}
where $\Gamma$ is a Jordan curve in $\mathbb{C}$ enclosing the desired eigenvalue $\lambda_i$
and no other eigenvalues.


We introduce the following lemma from \cite{Strang:1973} before stating error
estimates of the subspace projection method. 
\begin{lemma}(\cite[Lemma 6.4]{Strang:1973})\label{Strang_Lemma}
For any exact eigenpair $(\lambda,v)$ of (\ref{weak_eigenvalue_problem}), the following equality holds
\begin{eqnarray*}\label{Strang_Equality}
(\bar{\lambda}_{j}^{\br{k}}-\lambda)[\mathcal P_kv, \bar v_j^{\br{k}}]
=\lambda [v-\mathcal P_kv, \bar v_{j}^{\br{k}}],\ \ \
j = 1, \cdots, N_k.
\end{eqnarray*}
\end{lemma}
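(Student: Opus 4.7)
The identity is purely algebraic, relying on three ingredients: (i) the continuous eigenvalue relation for $(\lambda,v)$, (ii) the discrete eigenvalue relation for $(\bar\lambda_j^{(k)},\bar v_j^{(k)})$, and (iii) the Galerkin orthogonality \eqref{FEM_Projection} that defines $\mathcal{P}_k$. The plan is to compute $\langle \mathcal{P}_k v,\bar v_j^{(k)}\rangle$ in two different ways and then split $v=\mathcal{P}_k v+(v-\mathcal{P}_k v)$.

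First, since $\bar v_j^{(k)}\in \V^{(k)}$, the definition \eqref{FEM_Projection} of the projection $\mathcal{P}_k$ applied with $u=v$ and $w=\bar v_j^{(k)}$ yields
\begin{equation*}
\langle \mathcal{P}_k v,\bar v_j^{(k)}\rangle = \langle v,\bar v_j^{(k)}\rangle.
\end{equation*}
Next, test the continuous eigenvalue equation \eqref{weak_eigenvalue_problem} with $w=\bar v_j^{(k)}\in \cB$, which gives $\langle v,\bar v_j^{(k)}\rangle=\lambda[v,\bar v_j^{(k)}]$. Similarly, test the discrete eigenvalue equation \eqref{Weak_Eigenvalue_Discrete} with $w=\mathcal{P}_k v\in \V^{(k)}$, using symmetry of $\langle\cdot,\cdot\rangle$ and $[\cdot,\cdot]$, to obtain $\langle \mathcal{P}_k v,\bar v_j^{(k)}\rangle=\bar\lambda_j^{(k)}[\mathcal{P}_k v,\bar v_j^{(k)}]$.

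Chaining these three equalities gives $\lambda[v,\bar v_j^{(k)}]=\bar\lambda_j^{(k)}[\mathcal{P}_k v,\bar v_j^{(k)}]$. Finally, I write $v=\mathcal{P}_k v+(v-\mathcal{P}_k v)$ on the left-hand side and use bilinearity of $[\cdot,\cdot]$ to rearrange into
\begin{equation*}
(\bar\lambda_j^{(k)}-\lambda)\,[\mathcal{P}_k v,\bar v_j^{(k)}] = \lambda\,[v-\mathcal{P}_k v,\bar v_j^{(k)}],
\end{equation*}
which is exactly the claimed identity.

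There is no real obstacle here: the only subtlety worth flagging is making sure one uses $\mathcal{P}_k v$ (not $v$) as the admissible test function in the discrete eigenvalue equation, since that is what allows the Galerkin orthogonality to match the two inner products $\langle v,\bar v_j^{(k)}\rangle$ and $\langle\mathcal{P}_k v,\bar v_j^{(k)}\rangle$. The symmetry of both bilinear forms is used implicitly and holds by assumption on $\L$.
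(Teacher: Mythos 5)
Your proof is correct and is essentially the standard argument behind this identity (using Galerkin orthogonality to equate $\langle \mathcal{P}_k v,\bar v_j^{(k)}\rangle$ with $\langle v,\bar v_j^{(k)}\rangle$, invoking both eigenvalue relations, then splitting $v=\mathcal{P}_k v+(v-\mathcal{P}_k v)$). The paper itself states this lemma as a citation to Strang's book without reproducing a proof, so there is no in-paper argument to compare against; your reconstruction matches the expected derivation and your remark about using $\mathcal{P}_k v$ rather than $v$ as the test function in the discrete problem correctly identifies the one step where care is needed.
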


The following lemma gives the error estimates for the gamblet subspace approximation, which is a direct application of the subspace approximation theory for eigenvalue problems, see \cite[Lemma 3.6, Theorem 4.4]{BabuskaOsborn_1989} and \cite{Chatelin}.
\begin{lemma}\label{Err_Eigen_Global_Lem}
Let  $(\lambda,v)$ denote an exact eigenpair of the eigenvalue problem (\ref{weak_eigenvalue_problem}).
Assume the eigenpair approximation $(\bar\lambda_i^{\br{k}},\bar v_i^{\br{k}})$ has the property that
$\bar\mu_i^{\br{k}}=1/\bar\lambda_i^{\br{k}}$ is closest to $\mu=1/\lambda$.
The corresponding spectral projection $E_{i,k}: V\mapsto {\mathrm span}\{\bar v_i^{\br{k}}\}$ is defined as follows
\begin{eqnarray*}
\langle E_{i,k}w, \bar v_i^{\br{k}}\rangle = \langle w, \bar v_i^{\br{k}}\rangle,\ \ \ \ \forall w\in V.
\end{eqnarray*}

The eigenpair approximations
$(\bar \lambda_{i}^{\br{k}},\bar v_{i}^{\br{k}})$ $(i = 1, 2, \cdots, N_k)$ has the following error estimates
\begin{eqnarray}
\|E_{i,k}v- v\|
&\leq& \sqrt{1+\frac{1}{\lambda_1\delta_{\lambda}^{(k),2}}\eta^2(\V^{\br{k}})}\delta_k(\lambda_i),\label{Err_Eigenfunction_Global_1_Norm} \\
\|E_{i,k}v- v\|_0
&\leq& \Big(1+\frac{1}{\lambda_1\delta_{\lambda}^{(k)}}\Big)\eta(\V^{\br{k}})\|E_{i,k}v-v\|,\label{Err_Eigenfunction_Global_0_Norm}
\end{eqnarray}
where $\delta_{\lambda}^{(k)}$ is defined as follows
\begin{eqnarray}\label{Definition_Delta}
\delta_{\lambda}^{(k)} &:=&\min_{j\neq i} \Big|\frac{1}{\bar\lambda_j^{\br{k}}}-\frac{1}{\lambda}\Big|
\end{eqnarray}
and $\delta_{\lambda}^{(k),2} = (\delta_{\lambda}^{(k)})^2$.
\end{lemma}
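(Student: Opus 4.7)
The plan is to follow the classical Babuska-Osborn framework: split the eigenfunction error via the energy Galerkin projection $\mathcal{P}_k$ and extract the spectral gap $\delta_\lambda^{(k)}$ through Strang's identity (Lemma~\ref{Strang_Lemma}). I would set $e := v - E_{i,k}v$ and write $e = (v - \mathcal{P}_k v) + (\mathcal{P}_k v - E_{i,k}v)$. Since $v - \mathcal{P}_k v$ is $\langle\cdot,\cdot\rangle$-orthogonal to $\V^{(k)}$ while $\mathcal{P}_k v - E_{i,k}v \in \V^{(k)}$, the Pythagorean identity gives $\|e\|^2 = \|v - \mathcal{P}_k v\|^2 + \|\mathcal{P}_k v - E_{i,k}v\|^2$, and $\|v - \mathcal{P}_k v\| \leq \delta_k(\lambda_i)$ is immediate from the definition of $\delta_k$.

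Next, expanding in the $\langle\cdot,\cdot\rangle$-orthonormal basis $\{\bar v_j^{(k)}\}$ of $\V^{(k)}$ and using $E_{i,k}v = \langle v, \bar v_i^{(k)}\rangle \bar v_i^{(k)}$, I obtain $\mathcal{P}_k v - E_{i,k}v = \sum_{j\neq i}\langle v, \bar v_j^{(k)}\rangle \bar v_j^{(k)}$. The identity $\langle v, \bar v_j^{(k)}\rangle = \bar\lambda_j^{(k)}[\mathcal{P}_k v, \bar v_j^{(k)}]$ (from $\bar v_j^{(k)}$ being a discrete eigenfunction) combined with Lemma~\ref{Strang_Lemma} yields
$$\langle v, \bar v_j^{(k)}\rangle = \frac{\lambda\,\bar\lambda_j^{(k)}}{\bar\lambda_j^{(k)} - \lambda}\,[v - \mathcal{P}_k v, \bar v_j^{(k)}],$$
whose prefactor equals $1/|1/\lambda - 1/\bar\lambda_j^{(k)}| \leq 1/\delta_\lambda^{(k)}$ by the definition of $\delta_\lambda^{(k)}$. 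Using that $\{\sqrt{\bar\lambda_j^{(k)}}\,\bar v_j^{(k)}\}$ is $[\cdot,\cdot]$-orthonormal on $\V^{(k)}$, I deduce $\sum_j |[v-\mathcal{P}_k v, \bar v_j^{(k)}]|^2 \leq \|v-\mathcal{P}_k v\|_0^2/\bar\lambda_1^{(k)} \leq \|v-\mathcal{P}_k v\|_0^2/\lambda_1$, and a standard Aubin-Nitsche duality based on the dual solution $\L^{-1}f$ and the definition of $\eta(\V^{(k)})$ delivers $\|v-\mathcal{P}_k v\|_0 \leq \eta(\V^{(k)})\|v-\mathcal{P}_k v\|$. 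Chaining these bounds gives $\|\mathcal{P}_k v - E_{i,k}v\|^2 \leq \eta^2(\V^{(k)}) \|v - \mathcal{P}_k v\|^2 / (\lambda_1\,\delta_\lambda^{(k),2})$; substitution into the Pythagorean identity yields (\ref{Err_Eigenfunction_Global_1_Norm}).

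For (\ref{Err_Eigenfunction_Global_0_Norm}), I split $\|e\|_0 \leq \|v-\mathcal{P}_k v\|_0 + \|\mathcal{P}_k v - E_{i,k}v\|_0$, and bound the first summand by $\eta(\V^{(k)})\|v-\mathcal{P}_k v\| \leq \eta(\V^{(k)})\|e\|$ by the Aubin-Nitsche estimate above. For the second summand the same expansion gives $\|\mathcal{P}_k v - E_{i,k}v\|_0^2 = \sum_{j\neq i}|\langle v, \bar v_j^{(k)}\rangle|^2/\bar\lambda_j^{(k)}$; applying the Strang identity and absorbing the factor $1/\bar\lambda_j^{(k)}$ (bounded by $1/\lambda_1$) together with the bound on $\sum_j|[v-\mathcal{P}_k v, \bar v_j^{(k)}]|^2$ produces $\|\mathcal{P}_k v - E_{i,k}v\|_0 \leq \eta(\V^{(k)})\|e\|/(\lambda_1\,\delta_\lambda^{(k)})$, yielding the advertised factor $(1 + 1/(\lambda_1 \delta_\lambda^{(k)}))\eta(\V^{(k)})$.

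The main obstacle I anticipate is the careful bookkeeping of the two coexisting orthogonality structures on $\{\bar v_j^{(k)}\}$ — orthonormal in $\langle\cdot,\cdot\rangle$ but $[\cdot,\cdot]$-orthogonal with weights $1/\bar\lambda_j^{(k)}$ — so that one recovers exactly the stated combination of $\eta(\V^{(k)})$, $1/\lambda_1$ and $\delta_\lambda^{(k)}$ rather than a weaker version. Once that bookkeeping is in place, everything else is routine Babuska-Osborn theory and this is why the lemma is stated as a direct application of the cited references.
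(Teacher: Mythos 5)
Your proposal is correct and follows essentially the same route as the paper's proof: the orthogonal split $v-E_{i,k}v = (v-\mathcal P_k v) + (\mathcal P_k v - E_{i,k}v)$, the expansion of $\mathcal P_k v - E_{i,k}v$ in the discrete eigenbasis with coefficients controlled via Strang's identity and the gap $\delta_\lambda^{(k)}$, Bessel's inequality in the $[\cdot,\cdot]$-orthonormal rescaling $\{\sqrt{\bar\lambda_j^{(k)}}\,\bar v_j^{(k)}\}$, and the Aubin--Nitsche duality $\|v-\mathcal P_k v\|_0\leq\eta(\V^{(k)})\|v-\mathcal P_k v\|$. The only cosmetic difference is that the paper writes the Bessel step using the $L^2$-normalized basis $\bar v_j^{(k)}/\|\bar v_j^{(k)}\|_0$ while you use $\sqrt{\bar\lambda_j^{(k)}}\,\bar v_j^{(k)}$, which are the same vectors.
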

\begin{proof}
Following a classical duality argument found in  finite element method, we have
\begin{eqnarray}\label{L2_Energy_Estiate}
&&\|(I-\mathcal P_k)u\|_0=\sup_{\|g\|_0=1}[(I-\mathcal P_k)u,g]
=\sup_{\|g\|_0=1}\langle(I-\mathcal P_k)u,\mathcal L^{-1}g\rangle\nonumber\\
&&=\sup_{\|g\|_0=1}\langle(I-\mathcal P_k)u,(I-\mathcal P_k)\mathcal L^{-1}g\rangle
\leq \eta(\V^{\br{k}})\|(I-\mathcal P_k)u\|.
\end{eqnarray}

Since $(I-E_{i,k})\mathcal P_kv\in\V^{\br{k}}$ and
$\langle(I-E_{i,k})\mathcal P_kv, \bar v_i^{\br{k}}\rangle=0$,
the following orthogonal expansion holds
\begin{eqnarray}\label{Orthogonal_Decomposition}
(I-E_{i,k})\mathcal P_kv=\sum_{j\neq i}\alpha_j\bar v_{j}^{\br{k}},
\end{eqnarray}
where $\alpha_j=\langle\mathcal P_kv,\bar v_{j}^{\br{k}}\rangle$. From Lemma \ref{Strang_Lemma}, we have
\begin{eqnarray}\label{Alpha_Estimate}
\alpha_j&=&\langle\mathcal P_kv,\bar v_{j}^{\br{k}}\rangle = \bar\lambda_{j}^{\br{k}}
[\mathcal P_kv,\bar v_j^{\br{k}}]
=\frac{\bar\lambda_{j}^{\br{k}}\lambda}{\bar\lambda_{j}^{\br{k}}-\lambda}\big[v-\mathcal P_kv,\bar v_j^{\br{k}}\big]\nonumber\\
&=&\frac{1}{\mu-\bar\mu_{j}^{\br{k}}}\big[v-\mathcal P_kv,\bar v_j^{\br{k}}\big],
\end{eqnarray}
where $\mu = 1/\lambda$ and $\bar\mu_{j}^{\br{k}} = 1/\bar\lambda_j^{\br{k}}$.

From the property of eigenvectors $\bar v_1^{\br{k}},\cdots, \bar v_{m}^{\br{k}}$, the following identities hold
\begin{eqnarray*}
1 = \langle\bar v_j^{\br{k}},\bar v_j^{\br{k}}\rangle= \bar\lambda_j^{\br{k}}\big[\bar v_j^{\br{k}},\bar v_j^{\br{k}}\big]=\bar\lambda_j^{\br{k}}\|\bar v_j^{\br{k}}\|_0^2,
\end{eqnarray*}
which leads to the following property
\begin{eqnarray}\label{Equality_u_j}
\|\bar v_j^{\br{k}}\|_0^2=\frac{1}{\bar\lambda_j^{\br{k}}}=\bar\mu_j^{\br{k}}.
\end{eqnarray}
From (\ref{Weak_Eigenvalue_Discrete}) and definitions of eigenvectors $\bar v_1^{\br{k}},\cdots, \bar u_m^{\br{k}}$,
we have the following equalities
\begin{eqnarray}\label{Orthonormal_Basis}
\langle\bar v_j^{\br{k}},\bar v_{i}^{\br{k}}\rangle=\delta_{ij},
\ \ \ \ \ \Big[\frac{\bar v_j^{\br{k}}}{\|\bar v_j^{\br{k}}\|_0},
\frac{\bar v_{i}^{\br{k}}}{\|\bar v_{i}^{\br{k}}\|_0}\Big]=\delta_{ij},\ \ \ 1\leq i, j\leq N_k.
\end{eqnarray}
Combining (\ref{Orthogonal_Decomposition}), (\ref{Alpha_Estimate}), (\ref{Equality_u_j})
and (\ref{Orthonormal_Basis}),
the following estimates hold
\begin{eqnarray}\label{Equality_5}
&&\|(I-E_{i,k})\mathcal P_kv\|^2=\Big\|\sum_{j\neq i}\alpha_j\bar v_j^{\br{k}}\Big\|^2
=\sum_{j\neq i}\alpha_j^2=\sum_{j\neq i}\Big(\frac{1}{\mu-\bar\mu_{j}^{\br{k}}}\Big)^2
\Big|\big[v-\mathcal P_kv,\bar v_j^{\br{k}}\big]\Big|^2\nonumber\\
&&\leq\frac{1}{\delta_{\lambda}^{(k),2}}\sum_{j\neq i}\|\bar v_j^{\br{k}}\|_0^2
\Big|\Big[v-\mathcal P_kv,\frac{\bar v_j^{\br{k}}}{\|\bar v_j^{\br{k}}\|_0}\Big]\Big|^2
=\frac{1}{\delta_{\lambda}^{(k),2}}\sum_{j\neq i}\bar\mu_{j}^{\br{k}}
\Big|\Big[v-\mathcal P_kv,\frac{\bar v_j^{\br{k}}}{\|\bar v_j^{\br{k}}\|_0}\Big]\Big|^2\nonumber\\
&&\leq \frac{\bar\mu_{1}^{\br{k}}}{\delta_{\lambda}^{(k),2}}\|v-\mathcal P_kv\|_0^2.
\end{eqnarray}
From (\ref{L2_Energy_Estiate}), (\ref{Equality_5}) and the orthogonal property
$\langle v-\mathcal P_kv, (I-E_{i,k})\mathcal P_kv\rangle=0$,
we have the following error estimates
\begin{eqnarray*}\label{Error_estimate_Energy_Discrete}
&&\|v-E_{i,k}v\|^2=\|v-\mathcal P_kv\|^2
+\|(I-E_{i,k})\mathcal P_kv\|^2\nonumber\\
&&\leq\|(I-\mathcal P_k)v\|^2
+\frac{\bar\mu_{1}^{\br{k}}}{\delta_{\lambda}^{(k),2}}\|v-\mathcal P_kv\|_0^2
\leq \Big(1+\frac{\bar\mu_{1}^{\br{k}}}{\delta_{\lambda}^{(k),2}}\eta(\V^{\br{k}})^2\Big)\|(I-\mathcal P_k)v\|^2,
\end{eqnarray*}
which is the desired result (\ref{Err_Eigenfunction_Global_1_Norm}).

Similarly, combining (\ref{Orthogonal_Decomposition}), (\ref{Alpha_Estimate}), (\ref{Equality_u_j}) and (\ref{Orthonormal_Basis}),
leads to the following estimates
\begin{eqnarray}\label{Equality_6}
&&\|(I-E_{i,k})\mathcal P_kv\|_0^2 = \Big\|\sum_{j\neq i}\alpha_j\bar v_j^{\br{k}}\Big\|_0^2
=\sum_{j\neq i}\alpha_j^2\|\bar v_j^{\br{k}}\|_0^2 \nonumber\\
&=&\sum_{j\neq i}\Big(\frac{1}{\mu-\bar\mu_j^{\br{k}}}\Big)^2
\Big|\big[v-\mathcal P_kv,\bar v_j^{\br{k}}\big]\Big|^2\|\bar v_j^{\br{k}}\|_0^2\nonumber\\
&\leq&\frac{1}{\delta_{\lambda}^{(k),2}}\sum_{j\neq i}
\Big|\Big[v-\mathcal P_kv,\frac{\bar v_j^{\br{k}}}{\|\bar v_j^{\br{k}}\|_0}\Big]\Big|^2
\|\bar v_j^{\br{k}}\|_0^4\nonumber\\
&=&\frac{1}{\delta_{\lambda}^{(k),2}}\sum_{j\neq i}(\bar\mu_j^{\br{k}})^2
\Big|\Big[v-\mathcal P_kv,\frac{\bar v_j^{\br{k}}}{\|\bar v_j^{\br{k}}\|_0}\Big]\Big|^2
\leq \Big(\frac{\bar\mu_{1}^{\br{k}}}{\delta_{\lambda}^{(k)}}\Big)^2\|v-\mathcal P_kv\|_0^2.
\end{eqnarray}

By (\ref{L2_Energy_Estiate}) and (\ref{Equality_6}), we have the following inequalities
\begin{eqnarray}\label{Equality_8}
\|(I-E_{i,k})\mathcal P_kv\|_0 \leq \frac{\bar\mu_{1}^{\br{k}}}{\delta_{\lambda}^{(k)}}\|v-\mathcal P_kv\|_0
\leq \frac{\bar\mu_{1}^{\br{k}}}{\delta_{\lambda}^{(k)}}\eta(\V^{\br{k}})\|(I-\mathcal P_k)v\|.
\end{eqnarray}
From (\ref{L2_Energy_Estiate}), (\ref{Equality_8}) and the triangle inequality,
we conclude that the following error estimate for the eigenvector approximation in $L^2$-norm holds,
\begin{eqnarray}\label{Inequality_6}
&&\|v-E_{i,k}v\|_0\leq \|v-\mathcal P_kv\|_0
+ \|(I-E_{i,k})\mathcal P_kv\|_0\nonumber\\
&\leq&\|v-\mathcal P_kv\|_0
+ \frac{\bar\mu_{1}^{\br{k}}}{\delta_{\lambda}^{(k)}}\eta(\V^{\br{k}})\|(I-\mathcal P_k)v\|\nonumber\\
&\leq&\Big(1+\frac{\bar\mu_{1}^{\br{k}}}{\delta_{\lambda}^{(k)}}\Big)\eta(\V^{\br{k}})\|(I-\mathcal P_k)v\|\nonumber\\
&\leq&\Big(1+\frac{\bar\mu_{1}^{\br{k}}}{\delta_{\lambda}^{(k)}}\Big)\eta(\V^{\br{k}})\|(I- E_{i,k})v\|.
\end{eqnarray}
This is the second desired result (\ref{Err_Eigenfunction_Global_0_Norm}) and the proof is complete.
\end{proof}

In order to analyze the method which will be given in this section, we state some error estimates in the following lemma.
\begin{lemma}\label{Error_Superclose_Lemma}
Under the conditions of Lemma \ref{Err_Eigen_Global_Lem}, the following error estimates hold
\begin{eqnarray}
\|v-\bar v_i^{\br{k}}\| &\leq& \sqrt{2\Big(1+\frac{1}{\lambda_1\delta_{\lambda}^{(k),2}}\eta^2(\V^{\br{k}})\Big)}
\|(I-\mathcal P_k)v\|,\label{Err_Norm_1}\\
\|\lambda v - \bar\lambda_{i}^{\br{k}}\bar v_i^{\br{k}}\|_0 &\leq& C_\lambda
\eta(\V^{\br{k}})\|v-\bar v_i^{\br{k}}\|, \label{Err_Norm_0_Lambda}\\
\|v-\bar v_i^{\br{k}}\| &\leq& \frac{1}{1-D_\lambda\eta(\V^{\br{k}})}\|v-\mathcal P_kv\|,\label{Err_Norm_1_Superclose}
\end{eqnarray}
where
\begin{eqnarray}
C_\lambda = 2|\lambda|\Big(1+\frac{1}{\lambda_1\delta_{\lambda}^{(k)}}\Big)
+ \bar\lambda_i^{\br{k}}\sqrt{1+\frac{1}{\lambda_1\delta_{\lambda}^{(k),2}}\eta^2(\V^{\br{k}})^2},
\end{eqnarray}
and
\begin{eqnarray}\label{Definition_D_Lambda}
D_\lambda = \frac{1}{\sqrt{\lambda_1}}\left(2|\lambda|\Big(1+\frac{1}{\lambda_1\delta_{\lambda}^{(k)}}\Big)+ \bar\lambda_i^{\br{k}} \sqrt{1+\frac{1}{\lambda_1\delta_{\lambda}^{(k),2}}\eta^2(\V^{\br{k}})}\right).
\end{eqnarray}
\end{lemma}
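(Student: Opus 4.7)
The plan is to derive the three bounds in succession: \eqref{Err_Norm_1} from Lemma \ref{Err_Eigen_Global_Lem} via a Pythagoras-type comparison between $v-\bar v_i^{\br{k}}$ and the spectral projection $E_{i,k}v$, \eqref{Err_Norm_0_Lambda} from a triangle inequality combined with the Rayleigh quotient expansion of Lemma \ref{Rayleigh_quotient_expansion_lem}, and \eqref{Err_Norm_1_Superclose} as a standard Galerkin super-approximation argument that uses \eqref{Err_Norm_0_Lambda} together with the spectral inequality $\|w\|_0\leq\|w\|/\sqrt{\lambda_1}$ that comes from \eqref{Smallest_Eigenvalue}.

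For \eqref{Err_Norm_1}, fix the sign of $\bar v_i^{\br{k}}$ so that $\alpha:=\langle v,\bar v_i^{\br{k}}\rangle\geq 0$. Since $\|v\|=\|\bar v_i^{\br{k}}\|=1$ and $E_{i,k}v=\alpha\bar v_i^{\br{k}}$, direct computation gives $\|v-\bar v_i^{\br{k}}\|^2=2(1-\alpha)$ and $\|v-E_{i,k}v\|^2=(1-\alpha)(1+\alpha)\geq 1-\alpha$, hence $\|v-\bar v_i^{\br{k}}\|\leq\sqrt{2}\,\|v-E_{i,k}v\|$. Plugging in the intermediate estimate $\|v-E_{i,k}v\|^2\leq\bigl(1+\bar\mu_1^{\br{k}}\delta_\lambda^{(k),-2}\eta^2(\V^{\br{k}})\bigr)\|(I-\mathcal P_k)v\|^2$ derived in the proof of Lemma \ref{Err_Eigen_Global_Lem}, together with $\bar\mu_1^{\br{k}}=1/\bar\lambda_1^{\br{k}}\leq 1/\lambda_1$ (min-max principle), produces \eqref{Err_Norm_1} verbatim.

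For \eqref{Err_Norm_0_Lambda}, I split $\lambda v-\bar\lambda_i^{\br{k}}\bar v_i^{\br{k}}=\lambda(v-\bar v_i^{\br{k}})+(\lambda-\bar\lambda_i^{\br{k}})\bar v_i^{\br{k}}$ and control each piece. The norm $\|v-\bar v_i^{\br{k}}\|_0$ is bounded by the triangle inequality through $E_{i,k}v$: the principal term $\|v-E_{i,k}v\|_0$ is handled by \eqref{Err_Eigenfunction_Global_0_Norm} together with $\|v-E_{i,k}v\|\leq\|v-\bar v_i^{\br{k}}\|$, while the residual $\|E_{i,k}v-\bar v_i^{\br{k}}\|_0=(1-\alpha)(\bar\lambda_i^{\br{k}})^{-1/2}=\tfrac12\|v-\bar v_i^{\br{k}}\|^2(\bar\lambda_i^{\br{k}})^{-1/2}$ is re-linearized by using \eqref{Err_Norm_1} to convert the extra factor $\|v-\bar v_i^{\br{k}}\|$ into $\sqrt{\lambda}\,\eta(\V^{\br{k}})$ times a $\sqrt{1+\ldots}$ quantity; the sum yields the coefficient $2|\lambda|\bigl(1+1/(\lambda_1\delta_\lambda^{(k)})\bigr)$. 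For $|\lambda-\bar\lambda_i^{\br{k}}|\|\bar v_i^{\br{k}}\|_0$, apply Lemma \ref{Rayleigh_quotient_expansion_lem} with $w=\bar v_i^{\br{k}}$, $u=v$, giving
\begin{equation*}
\bar\lambda_i^{\br{k}}-\lambda=\bar\lambda_i^{\br{k}}\|v-\bar v_i^{\br{k}}\|^2-\lambda\bar\lambda_i^{\br{k}}\|v-\bar v_i^{\br{k}}\|_0^2;
\end{equation*}
multiplying by $\|\bar v_i^{\br{k}}\|_0=(\bar\lambda_i^{\br{k}})^{-1/2}$ and again absorbing one power of $\|v-\bar v_i^{\br{k}}\|$ via \eqref{Err_Norm_1} delivers the second piece $\bar\lambda_i^{\br{k}}\sqrt{1+\ldots}\,\eta(\V^{\br{k}})\|v-\bar v_i^{\br{k}}\|$; summing matches $C_\lambda$.

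For \eqref{Err_Norm_1_Superclose}, decompose $v-\bar v_i^{\br{k}}=(v-\mathcal P_kv)+(\mathcal P_kv-\bar v_i^{\br{k}})$. The Galerkin orthogonality $\langle v-\mathcal P_kv,w\rangle=0$ for $w\in\V^{\br{k}}$ gives Pythagoras $\|v-\bar v_i^{\br{k}}\|^2=\|v-\mathcal P_kv\|^2+\|\mathcal P_kv-\bar v_i^{\br{k}}\|^2$. Testing the eigenvalue equation for $v$ and the Galerkin equation for $\bar v_i^{\br{k}}$ against the test function $\mathcal P_kv-\bar v_i^{\br{k}}\in\V^{\br{k}}$ yields
\begin{equation*}
\|\mathcal P_kv-\bar v_i^{\br{k}}\|^2=\lambda[v,\mathcal P_kv-\bar v_i^{\br{k}}]-\bar\lambda_i^{\br{k}}[\bar v_i^{\br{k}},\mathcal P_kv-\bar v_i^{\br{k}}]=[\lambda v-\bar\lambda_i^{\br{k}}\bar v_i^{\br{k}},\,\mathcal P_kv-\bar v_i^{\br{k}}].
\end{equation*}
The Cauchy--Schwarz inequality in $\cB_0$ and $\|\cdot\|_0\leq\|\cdot\|/\sqrt{\lambda_1}$ imply $\|\mathcal P_kv-\bar v_i^{\br{k}}\|\leq\|\lambda v-\bar\lambda_i^{\br{k}}\bar v_i^{\br{k}}\|_0/\sqrt{\lambda_1}$, which by \eqref{Err_Norm_0_Lambda} is $\leq D_\lambda\eta(\V^{\br{k}})\|v-\bar v_i^{\br{k}}\|$ with $D_\lambda=C_\lambda/\sqrt{\lambda_1}$ as stated in \eqref{Definition_D_Lambda}. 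Substituting into Pythagoras gives $(1-D_\lambda^2\eta^2(\V^{\br{k}}))\|v-\bar v_i^{\br{k}}\|^2\leq\|v-\mathcal P_kv\|^2$, and \eqref{Err_Norm_1_Superclose} follows from the elementary inequality $\sqrt{1-x^2}\geq 1-x$ on $[0,1]$ (applied with $x=D_\lambda\eta(\V^{\br{k}})$, assumed $<1$).

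The main technical obstacle lies in \eqref{Err_Norm_0_Lambda}: both the residual from the $E_{i,k}v$ decomposition and the Rayleigh quotient remainder are quadratic in $\|v-\bar v_i^{\br{k}}\|$, so converting them into the announced \emph{linear} structure $\eta(\V^{\br{k}})\|v-\bar v_i^{\br{k}}\|$ with the precise constant $C_\lambda$ requires self-referential use of \eqref{Err_Norm_1}. Everything else is routine.
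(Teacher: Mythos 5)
Your treatment of \eqref{Err_Norm_1} and \eqref{Err_Norm_1_Superclose} is correct and closely parallels the paper's. For \eqref{Err_Norm_1} you compute $\|v-\bar v_i^{\br{k}}\|^2=2(1-\alpha)$ and $\|v-E_{i,k}v\|^2=1-\alpha^2$ explicitly, whereas the paper obtains the same factor-$\sqrt{2}$ comparison by inserting $\|v\|\|E_{i,k}v\|\geq\langle v,E_{i,k}v\rangle$; these are equivalent. For \eqref{Err_Norm_1_Superclose} you use Pythagoras plus $\sqrt{1-x^2}\geq 1-x$, while the paper uses a plain triangle inequality and then solves for $\|v-\bar v_i^{\br{k}}\|$; your route is in fact slightly sharper and fully correct.

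The genuine gap is in \eqref{Err_Norm_0_Lambda}. Your strategy of ``re-linearizing'' the quadratic residual $\|E_{i,k}v-\bar v_i^{\br{k}}\|_0=\tfrac12\|v-\bar v_i^{\br{k}}\|^2(\bar\lambda_i^{\br{k}})^{-1/2}$ by feeding one factor of $\|v-\bar v_i^{\br{k}}\|$ back into \eqref{Err_Norm_1} does not reproduce the announced coefficient $2|\lambda|\bigl(1+1/(\lambda_1\delta_\lambda^{(k)})\bigr)$: after that substitution you would pick up an extra factor of order $\sqrt{\lambda/\bar\lambda_i^{\br{k}}}\sqrt{1+\eta^2/(\lambda_1\delta_\lambda^{(k),2})}$, which is not algebraically identical to doubling the main term. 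The paper avoids this entirely and obtains the factor $2$ by a much more elementary step: since $E_{i,k}v=\alpha\bar v_i^{\br{k}}$ with $\alpha\leq 1$,
\begin{equation*}
\|E_{i,k}v-\bar v_i^{\br{k}}\|_0=\|\bar v_i^{\br{k}}\|_0-\|E_{i,k}v\|_0=\frac{1}{\sqrt{\bar\lambda_i^{\br{k}}}}-\|E_{i,k}v\|_0\leq\frac{1}{\sqrt{\lambda}}-\|E_{i,k}v\|_0=\|v\|_0-\|E_{i,k}v\|_0\leq\|v-E_{i,k}v\|_0,
\end{equation*}
using $\lambda\leq\bar\lambda_i^{\br{k}}$ from the min-max principle and the reverse triangle inequality. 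Thus $\|v-\bar v_i^{\br{k}}\|_0\leq 2\|v-E_{i,k}v\|_0$ with no self-referential linearization at all. Likewise, for the Rayleigh-quotient piece you apply Lemma \ref{Rayleigh_quotient_expansion_lem} with $u=v\in M(\lambda)$, producing a bound in terms of $\|v-\bar v_i^{\br{k}}\|^2$, while the paper takes $u=E\bar v_i^{\br{k}}$ and uses the identity $\|\bar v_i^{\br{k}}-E\bar v_i^{\br{k}}\|=\|v-E_{i,k}v\|$; this then allows the mixed product $\|v-E_{i,k}v\|\cdot\|v-\bar v_i^{\br{k}}\|$ and exactly matches $\bar\lambda_i^{\br{k}}\sqrt{1+\ldots}$ in $C_\lambda$, again without any extra $\sqrt{\lambda/\bar\lambda_i^{\br{k}}}$. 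So while your plan is workable up to a constant (and would prove a lemma of the announced form with a larger $C_\lambda$), it does not yield the stated $C_\lambda$, and the simpler reverse-triangle argument is the intended device.
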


\begin{proof}
Let us set $\alpha>0$ such that $E_{i,k}v = \alpha \bar v_{i}^{\br{k}}$. Then it implies that
\begin{eqnarray}\label{Definition_Alpha}
1= \|v\| \geq \|E_{i,k}v\| = \alpha  \|\bar v_{i}^{\br{k}}\| = \alpha.
\end{eqnarray}
Based on the error estimates in Lemma \ref{Err_Eigen_Global_Lem}, the property $\|v\|=\|\bar v_i^{\br{k}}\|=1$
and (\ref{Definition_Alpha}), we have the following estimations
\begin{eqnarray}\label{Error_1}
&&\|v-\bar v_{i}^{\br{k}}\|^2 = \|v-E_{i,k}v\|^2+ \|\bar v_i^{\br{k}}-E_{i,k}v\|^2\nonumber\\
&=& \|v-E_{i,k}v\|^2 + \|\bar v_i^{\br{k}}\|^2 - 2\langle\bar v_i^{\br{k}}, E_{i,k}v\rangle
+ \|E_{i,k}v\|^2\nonumber\\
&=& \|v-E_{i,k}v\|^2 + 1 - 2\|\bar v_i^{\br{k}}\|\|E_{i,k}v\| + \|E_{i,k}v\|^2\nonumber\\
&=& \|v-E_{i,k}v\|^2 + \|v\|^2 - 2\|v\|\|E_{i,k}v\| + \|E_{i,k}v\|^2\nonumber\\
&\leq& \|v-E_{i,k}v\|^2 + \|v\|^2 - 2\langle v, E_{i,k}v\rangle + \|E_{i,k}v\|^2
\leq 2 \|v-E_{i,k}v\|^2.
\end{eqnarray}
(\ref{Err_Eigenfunction_Global_1_Norm}) and (\ref{Error_1}) lead to the desired result \eqref{Err_Norm_1}.

With the help of (\ref{Err_Eigenfunction_Global_0_Norm}) and the property (\ref{Definition_Alpha}) and
 $\|v\|_0 =\frac{1}{\sqrt{\lambda}} \geq \|\bar v_i^{\br{k}}\|_0 = \frac{1}{\sqrt{\bar\lambda_i^{\br{k}}}}$,
 we have the following estimates for $\|v - \bar v_i^{\br{k}}\|_0$
\begin{eqnarray}\label{Error_2}
&&\|v - \bar v_i^{\br{k}}\|_0 \leq \|v - E_{i,k}v\|_0 + \|E_{i,k}v-\bar v_i^{\br{k}}\|_0\nonumber\\
&=& \|v - E_{i,k}v\|_0  + \|\bar v_i^{\br{k}}\|_0 - \|E_{i,k}v\|_0
= \|v - E_{i,k}v\|_0  + \frac{1}{\sqrt{\bar\lambda_{i}^{\br{k}}}} - \|E_{i,k}v\|_0 \nonumber\\
&\leq & \|v - E_{i,k}v\|_0  + \frac{1}{\sqrt{\lambda}} - \|E_{i,k}v\|_0
=  \|v - E_{i,k}v\|_0  + \|v\|_0 - \|E_{i,k}v\|_0 \nonumber\\
&\leq & \|v - E_{i,k}v\|_0  + \|v - E_{i,k}v\|_0 \leq 2\|v - E_{i,k}v\|_0 \nonumber\\
&\leq& 2\Big(1+\frac{1}{\bar\lambda_1^{\br{k}}\delta_{\lambda}^{(k)}}\Big)\eta(\V^{\br{k}})\|v-E_{i,k}v\|
\leq 2\Big(1+\frac{1}{\lambda_1\delta_{\lambda}^{(k)}}\Big)\eta(\V^{\br{k}})\|v- \bar v_i^{\br{k}}\|.
\end{eqnarray}
From the expansion (\ref{Rayleigh_quotient_expansion}), the definition  (\ref{eta_a_h_Def}),
error estimate (\ref{Err_Eigenfunction_Global_1_Norm})
 and the property $\|\bar v_i^{\br{k}} - E \bar v_i^{\br{k}}\| = \|v-E_{i,k}v\|\leq \|v-\bar v_i^{\br{k}}\|$,
the following error estimates hold
\begin{eqnarray}\label{Error_3}
&&|\lambda-\bar\lambda_i^{\br{k}}| \leq \frac{\|\bar v_i^{\br{k}} - E \bar v_i^{\br{k}}\|^2}{\|\bar v_i^{\br{k}}\|_0^2}
 =\frac{\|v - E_{i,k} v\|^2}{\|\bar v_i^{\br{k}}\|_0^2} \nonumber\\
 &\leq& \bar\lambda_i^{\br{k}}\sqrt{1+\frac{1}{\lambda_1\delta_{\lambda}^{(k),2}}\eta^2(\V^{\br{k}})}
\|(I-\mathcal P_k)v\| \|v-\bar v_i^{\br{k}}\|\nonumber\\
&\leq& \lambda\bar\lambda_{i}^{\br{k}}\sqrt{1+\frac{1}{\lambda_1\delta_{\lambda}^{(k),2}}\eta^2(\V^{\br{k}})}
\|(I-\mathcal P_k)\mathcal L^{-1}v\| \|v-\bar v_i^{\br{k}}\|\nonumber\\
&\leq& \lambda\bar\lambda_{i}^{\br{k}}\sqrt{1+\frac{1}{\lambda_1\delta_{\lambda}^{(k),2}}\eta^2(\V^{\br{k}})}
\eta(\V^{\br{k}})\|v\|_0 \|v-\bar v_i^{\br{k}}\|\nonumber\\
&\leq& \sqrt{\lambda}\bar\lambda_{i}^{\br{k}}\sqrt{1+\frac{1}{\lambda_1\delta_{\lambda}^{(k),2}}\eta^2(\V^{\br{k}})}
\eta(\V^{\br{k}})\|v-\bar v_i^{\br{k}}\|.
\end{eqnarray}



Then the combination of (\ref{Error_2}), (\ref{Error_3}) and the property
$\|\bar v_i^{\br{k}}\|_0 = 1/\sqrt{\bar \lambda_{i}^{\br{k}}} \leq 1/\sqrt{\lambda}$ leads to the following estimate
\begin{eqnarray}\label{Error_5}
&&\|\lambda v-\bar\lambda_{i}^{\br{k}}\bar v_i^{\br{k}}\|_0 \leq |\lambda|\| v - \bar v_i^{\br{k}}\|_0
+ \|\bar v_i^{\br{k}}\|_0 |\lambda - \bar \lambda_{i}^{\br{k}}\|_0\nonumber\\
&\leq& \left(2|\lambda|\Big(1+\frac{1}{\lambda_1\delta_{\lambda}^{(k)}}\Big)+
\|\bar v_i^{\br{k}}\|_0\sqrt{\lambda}\bar\lambda_i^{\br{k}}\sqrt{1+\frac{1}{\lambda_1\delta_{\lambda}^{(k),2}}\eta(\V^{\br{k}})^2}
\right)\eta(\V^{\br{k}})\|v-\bar v_i^{\br{k}}\|\nonumber\\
&\leq& \left(2|\lambda|\Big(1+\frac{1}{\lambda_1\delta_{\lambda}^{(k)}}\Big)
+ \bar\lambda_i^{\br{k}}\sqrt{1+\frac{1}{\lambda_1\delta_{\lambda}^{(k),2}}\eta^2(\V^{\br{k}})^2}\right)\eta(\V^{\br{k}})\|v-\bar v_i^{\br{k}}\|,
\end{eqnarray}
which is the desired result (\ref{Err_Norm_0_Lambda}).

We now investigate the distance of $\mathcal P_k v$ from $\bar v_i^{\br{k}}$. First, the following
estimate holds
\begin{eqnarray}\label{Error_6}
&&\|\mathcal P_k v - \bar v_{i}^{\br{k}}\|^2 = \langle \mathcal P_k v-\bar v_{i}^{\br{k}},\mathcal P_k v-\bar v_{i}^{\br{k}}\rangle =
\langle v-\bar v_{i}^{\br{k}},\mathcal P_k v-\bar v_{i}^{\br{k}}\rangle\nonumber\\
&&=[\lambda v-\bar\lambda_{i}^{\br{k}}\bar v_i^{\br{k}}, \mathcal P_k v-\bar v_{i}^{\br{k}}]
\leq \|\lambda v-\bar\lambda_{i}^{\br{k}}\bar v_i^{\br{k}}\|_0 \|\mathcal P_k v-\bar v_{i}^{\br{k}}\|_0\nonumber\\
&&\leq \frac{1}{\sqrt{\lambda_1}} \|\lambda v-\bar\lambda_{i}^{\br{k}}\bar v_i^{\br{k}}\|_0 \|\mathcal P_k v-\bar v_{i}^{\br{k}}\|.
\end{eqnarray}
From (\ref{Error_5}) and (\ref{Error_6}), we have the following estimate
\begin{eqnarray}\label{Error_7}
&&\|\mathcal P_k v-\bar v_{i}^{\br{k}}\|
\leq \frac{1}{\sqrt{\lambda_1}} \|\lambda v-\bar\lambda_{i}^{\br{k}}\bar v_i^{\br{k}}\|_0\nonumber\\
&&\leq \frac{1}{\sqrt{\lambda_1}}\left(2|\lambda|
\Big(1+\frac{1}{\lambda_1\delta_{\lambda}^{(k)}}\Big)+ \bar\lambda_i^{\br{k}}
\sqrt{1+\frac{\bar\mu_{1}^{\br{k}}}{\delta_{\lambda}^{(k),2}}\eta(\V^{\br{k}})^2}
\right)\eta(\V^{\br{k}})\|v-\bar v_i^{\br{k}}\|.
\end{eqnarray}
\eqref{Error_7} and the triangle inequality lead to the following inequality
\begin{eqnarray*}
&&\|v-\bar v_i^{\br{k}}\| \leq \|v-\mathcal P_kv\|  + \|\mathcal P_k v-\bar v_{i}^{\br{k}}\|\nonumber\\
&&\leq \|v-\mathcal P_kv\|+ \frac{1}{\sqrt{\lambda_1}}\left(2|\lambda|\Big(1+\frac{1}{\lambda_1\delta_{\lambda}^{(k)}}\Big)
+ \bar\lambda_i^{\br{k}} \sqrt{1+\frac{\bar\mu_1^{\br{k}}}{\delta_{\lambda}^{(k),2}}\eta^2(\V^{\br{k}})}\right)\eta(\V^{\br{k}})
\|v-\bar v_i^{\br{k}}\|,
\end{eqnarray*}
which in turn implies that
\begin{eqnarray*}\label{Error_8}
\|v-\bar v_i^{\br{k}}\|&\leq& \frac{1}{1-\frac{1}{\sqrt{\lambda_1}}\left(2|\lambda|\Big(1+\frac{1}{\lambda_1\delta_{\lambda}^{(k)}}\Big)+ \bar\lambda_i^{\br{k}} \sqrt{1+\frac{1}{\lambda_1\delta_{\lambda}^{(k),2}}\eta^2(\V^{\br{k}})}\right)\eta(\V^{\br{k}})} \|v-\mathcal P_kv\|\nonumber\\
&\leq&\frac{1}{1-D_\lambda\eta(\V^{\br{k}})}\|v-\mathcal P_kv\|.
\end{eqnarray*}
This completes the proof of the desired result (\ref{Err_Norm_1_Superclose}).
%
\end{proof}


\subsection{One Correction Step}
To describe the multilevel correction method  we first present the ``one correction step''. Given an eigenpair approximation
$(\lambda^{(k,\ell)},v^{(k,\ell)})\in \R\times \V^{\br{k}}$, Algorithm \ref{Multigrid_Smoothing_Step} produces an improved
eigenpair approximation $(\lambda^{(k,\ell+1)},v^{(k,\ell+1)})\in \R\times \V^{\br{k}}$.
In this algorithm, the superscript $(k,\ell)$ denotes the $\ell$-th correction step in the $k$-th level gamblet space.

\begin{algorithm}[H]\caption{One Correction Step}\label{Multigrid_Smoothing_Step}
\begin{enumerate}
\item Let $\widetilde{v}^{(k,\ell+1)}\in \V^{\br{k}}$ be the solution of the linear system
\begin{eqnarray}\label{aux_problem}
\langle \widetilde{v}^{(k,\ell+1)}, w\rangle&=&
\lambda^{(k,\ell)} [v^{(k,\ell)},w],\ \
\ \forall w\in \V^{\br{k}}.
\end{eqnarray}
Approximate $\widetilde{v}^{(k,\ell+1)}$ by $\widehat v^{(k,\ell+1)} = \MG(k, v^{(k,\ell)}, \lambda^{(k,\ell)} v^{(k,\ell)}) $ using Algorithm \ref{alg:multigrid}.

\item  Let $\V^{\br{1}}$ be the coarsest gamblet space, define
$$\V^{\br{1,k}}=\V^{\br{1}}+{\textrm{span}}\{\widehat{v}^{(k,\ell+1)}\}$$ and solve
the subspace eigenvalue problem: Find $(\lambda^{(k,\ell+1)},v^{(k,\ell+1)})\in\R\times \V^{\br{1,k}}$ such
that $\langle v^{(k,\ell+1)},v^{(k,\ell+1)}\rangle = 1$ and
\begin{eqnarray}\label{Eigen_Augment_Problem}
\langle v^{(k,\ell+1)}, w\rangle &=&\lambda^{(k,\ell+1)}[v^{(k,\ell+1)},w],\ \ \
\forall w\in \V^{\br{1,k}}.
\end{eqnarray}
\end{enumerate}

Let ${\texttt{EigenMG}}$ be the function summarizing the action of the steps described above, i.e.
\begin{eqnarray*}
(\lambda^{(k,\ell+1)},v^{(k,\ell+1)})
={\texttt{EigenMG}}(\V^{\br{1}},\lambda^{(k,\ell)}, v^{(k,\ell)},\V^{\br{k}}).
\end{eqnarray*}
\end{algorithm}

\begin{Remark}
\revise{Notice that in \eqref{Eigen_Augment_Problem}, the orthogonalization is only performed in the coarse space $\V^{\br{1,k}}$ with dimension $1+{\mathrm dim}\V^{(1)}$. }
\end{Remark}

For simplicity of notation, we assume that the eigenvalue gap $\delta_\lambda^{\br{k}}$
has a uniform lower bound which is denoted by $\delta_\lambda$ (which can be seen as the "true" separation of the eigenvalues) in the following parts of this paper.
This assumption is reasonable when the mesh size $H$ is small enough.
 We refer to
\cite[Theorem 4.6]{Saad:2011} for details on the dependence of error estimates on the eigenvalue gap.
Furthermore, we also assume the concerned eigenpair approximation $(\lambda^{\br{k,\ell}}, v^{\br{k,\ell}})$
is closet to the exact eigenpair $(\bar\lambda^{\br{k}}, \bar v^{\br{k}})$ of (\ref{Weak_Eigenvalue_Discrete})
and $(\lambda, v)$ of (\ref{weak_eigenvalue_problem}).
\begin{theorem}\label{Error_Estimate_One_Smoothing_Theorem}
Assume there exists exact eigenpair $(\bar\lambda^{\br{k}}, \bar v^{\br{k}})$ such
that the  eigenpair approximation $(\lambda^{(k,\ell)},v^{(k,\ell)})$ satisfies $\|v^{(k,\ell)}\|=1$ and
\begin{eqnarray}\label{Estimate_h_k_b}
\|\bar\lambda^{\br{k}}\bar v^{\br{k}}-\lambda^{(k,\ell)}v^{(k,\ell)}\|_0 \leq C_1 \eta (\V^{\br{1}})\|\bar v^{\br{k}}-v^{(k,\ell)}\|
\end{eqnarray}
for some constant  $C_1$.
The multigrid iteration for the linear equation (\ref{aux_problem}) has the following uniform contraction rate
\begin{eqnarray}\label{Contraction_Rate}
\|\widehat v^{(k,\ell+1)}-\widetilde v^{(k,\ell+1)}\|\leq \theta \|v^{(k,\ell)}-\widetilde v^{(k,\ell+1)}\|
\end{eqnarray}
with $\theta<1$ independent from $k$ and $\ell$.

Then the eigenpair approximation
$(\lambda^{(k,\ell+1)},v^{(k,\ell+1)})\in\mathbb{R}\times \V^{\br{k}}$ produced by
 Algorithm \ref{Multigrid_Smoothing_Step} satisfies
\begin{eqnarray}
\|\bar v^{\br{k}}-v^{(k,\ell+1)}\| &\leq & \gamma \|\bar v^{\br{k}}-v^{(k,\ell)}\|,\label{Estimate_h_k_1_a}\\
\|\bar\lambda^{\br{k}}\bar v^{\br{k}}-\lambda^{(k,\ell+1)}v^{(k,\ell+1)}\|_0&\leq&
\bar C_\lambda \eta (\V^{\br{1}})\|\bar v^{\br{k}}-v^{(k,\ell+1)}\|,\label{Estimate_h_k_1_b}
\end{eqnarray}
where the constants $\gamma$, $\bar C_\lambda$ and $D_\lambda$ are defined as follows
\begin{eqnarray}\label{Gamma_Definition}
\gamma = \frac{1}{1-\bar D_\lambda \eta(\V^{\br{1}})}
\Big(\theta+(1+\theta)\frac{C_1}{\sqrt{\lambda_1}}\eta (\V^{\br{1}})\Big)\,.
\end{eqnarray}
\begin{eqnarray}\label{Definition_C_Bar}
\bar C_\lambda = 2|\lambda|\Big(1+\frac{1}{\lambda_1\delta_\lambda}\Big)
+ \bar\lambda_i^{\br{1}}\sqrt{1+\frac{1}{\lambda_1\delta_\lambda^2}\eta^2(\V^{\br{1}})^2},
\end{eqnarray}
\begin{eqnarray}\label{Definition_D_Lambda_Bar}
\bar D_\lambda = \frac{1}{\sqrt{\lambda_1}}\left(2|\lambda|\Big(1+\frac{1}{\lambda_1\delta_\lambda}\Big)+ \bar\lambda_i^{\br{1}} \sqrt{1+\frac{1}{\lambda_1\delta_\lambda^2}\eta^2(\V^{\br{1}})}\right).
\end{eqnarray}
\end{theorem}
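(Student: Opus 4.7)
The plan is to decompose the analysis into the two stages of Algorithm \ref{Multigrid_Smoothing_Step}: (i) the approximate solve of \eqref{aux_problem}, producing $\widehat v^{(k,\ell+1)}$, and (ii) the Rayleigh--Ritz step on the augmented coarse space $\V^{(1,k)}$ producing $(\lambda^{(k,\ell+1)}, v^{(k,\ell+1)})$.

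For stage (i), I would first bound $\|\widetilde v^{(k,\ell+1)} - \bar v^{(k)}\|$ by Galerkin orthogonality. Since $\langle \bar v^{(k)}, w\rangle = \bar\lambda^{(k)}[\bar v^{(k)}, w]$ for every $w \in \V^{(k)}$, subtracting this from \eqref{aux_problem} yields
\[
\langle \widetilde v^{(k,\ell+1)} - \bar v^{(k)},\, w\rangle = [\lambda^{(k,\ell)}v^{(k,\ell)} - \bar\lambda^{(k)}\bar v^{(k)},\, w], \qquad w \in \V^{(k)}.
\]
Testing with $w = \widetilde v^{(k,\ell+1)} - \bar v^{(k)}$, applying Cauchy--Schwarz together with the elementary inequality $\|\cdot\|_0 \leq \lambda_1^{-1/2}\|\cdot\|$ implied by \eqref{Smallest_Eigenvalue}, and invoking hypothesis \eqref{Estimate_h_k_b} gives
\[
\|\widetilde v^{(k,\ell+1)} - \bar v^{(k)}\| \leq \frac{C_1}{\sqrt{\lambda_1}}\,\eta(\V^{(1)})\,\|\bar v^{(k)} - v^{(k,\ell)}\|.
\]
Combining this with the multigrid contraction hypothesis \eqref{Contraction_Rate} through the triangle inequality $\|v^{(k,\ell)} - \widetilde v^{(k,\ell+1)}\| \leq \|v^{(k,\ell)} - \bar v^{(k)}\| + \|\bar v^{(k)} - \widetilde v^{(k,\ell+1)}\|$ produces
\[
\|\widehat v^{(k,\ell+1)} - \bar v^{(k)}\| \leq \Bigl(\theta + (1+\theta)\tfrac{C_1}{\sqrt{\lambda_1}}\eta(\V^{(1)})\Bigr)\|\bar v^{(k)} - v^{(k,\ell)}\|.
\]

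For stage (ii), I would reinterpret $(\lambda^{(k,\ell+1)}, v^{(k,\ell+1)})$ as the Rayleigh--Ritz approximation of the Galerkin eigenpair $(\bar\lambda^{(k)}, \bar v^{(k)})$ when $\V^{(k)}$ is viewed as the \emph{ambient} Hilbert space and $\V^{(1,k)} \subset \V^{(k)}$ as the discretization subspace. The proofs of Lemmas \ref{Err_Eigen_Global_Lem} and \ref{Error_Superclose_Lemma} use only the abstract Hilbert-space/spectral structure and transfer verbatim to this setting, so the analog of \eqref{Err_Norm_1_Superclose} delivers
\[
\|\bar v^{(k)} - v^{(k,\ell+1)}\| \leq \frac{1}{1 - \bar D_\lambda\,\eta(\V^{(1,k)})}\,\inf_{w \in \V^{(1,k)}}\|\bar v^{(k)} - w\|.
\]
Since $\widehat v^{(k,\ell+1)} \in \V^{(1,k)}$, the infimum is bounded above by $\|\bar v^{(k)} - \widehat v^{(k,\ell+1)}\|$; and the inclusion $\V^{(1)} \subset \V^{(1,k)}$ forces $\eta(\V^{(1,k)}) \leq \eta(\V^{(1)})$. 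Chaining with the stage-(i) bound yields \eqref{Estimate_h_k_1_a} with $\gamma$ as in \eqref{Gamma_Definition}. The companion estimate \eqref{Estimate_h_k_1_b} follows from the analog of \eqref{Err_Norm_0_Lambda} applied in the same transferred setting, producing the constant $\bar C_\lambda$ from \eqref{Definition_C_Bar}.

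The main obstacle I anticipate is stage (ii): rigorously justifying that Lemmas \ref{Err_Eigen_Global_Lem} and \ref{Error_Superclose_Lemma}, originally proved for the pair $(\cB, \V^{(k)})$, really do transfer to $(\V^{(k)}, \V^{(1,k)})$. This requires verifying that the spectral-gap quantity for the augmented problem on $\V^{(1,k)}$ is bounded below by $\delta_\lambda$, and that $\bar\lambda_i^{(1)}$ majorizes the relevant coarse eigenvalue on $\V^{(1,k)}$, so that the constants $\bar C_\lambda, \bar D_\lambda$ of \eqref{Definition_C_Bar}--\eqref{Definition_D_Lambda_Bar} correctly play the role of $C_\lambda, D_\lambda$ from Lemma \ref{Error_Superclose_Lemma}. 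Once this transfer is established, the remainder is routine triangle-inequality bookkeeping.
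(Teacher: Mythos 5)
Your proposal follows essentially the same route as the paper's proof: the two-stage decomposition (Galerkin orthogonality and Cauchy--Schwarz to bound $\|\bar v^{\br{k}}-\widetilde v^{(k,\ell+1)}\|$, then the triangle inequality with the multigrid contraction to control $\|\bar v^{\br{k}}-\widehat v^{(k,\ell+1)}\|$, then reinterpreting the Rayleigh--Ritz step on $\V^{\br{1,k}}$ as a subspace approximation of the eigenproblem \eqref{Weak_Eigenvalue_Discrete} and invoking the analogs of Lemmas \ref{Err_Eigen_Global_Lem} and \ref{Error_Superclose_Lemma} with $\eta(\V^{\br{1,k}})\le\eta(\V^{\br{1}})$). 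The "main obstacle" you flag at the end is precisely what the paper handles implicitly by citing ``Lemmas \ref{Err_Eigen_Global_Lem}, \ref{Error_Superclose_Lemma}, and their proof'' and by replacing $\bar\lambda_i^{\br{k}}$ with the upper bound $\bar\lambda_i^{\br{1}}$ and $\delta_\lambda^{(k)}$ with the uniform lower bound $\delta_\lambda$ in the constants $\bar C_\lambda,\bar D_\lambda$, so your plan is sound.
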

\begin{proof}
From \eqref{Smallest_Eigenvalue}, (\ref{Weak_Eigenvalue_Discrete}) and (\ref{aux_problem}), we have for  $w\in \V^{\br{k}}$
\begin{eqnarray*}\label{One_Correction_1}
&&\langle \bar v^{\br{k}}-\widetilde{v}^{(k,\ell+1)}, w\rangle
=[(\bar\lambda^{\br{k}}\bar v^{\br{k}}-\lambda^{(k,\ell)}v^{(k,\ell)}),w]\nonumber\\
&\leq&\|\bar\lambda^{\br{k}}\bar v^{\br{k}}-\lambda^{(k,\ell)}v^{(k,\ell)}\|_0\|w\|_0
\leq C_1\eta(\V^{\br{k}})\|\bar v^{\br{k}}-v^{(k,\ell)}\| \|w\|_0 \nonumber\\
&\leq& \frac{1}{\sqrt{\lambda_1}}C_1\eta(\V^{\br{k}})\|\bar v^{\br{k}}-v^{(k,\ell)}\| \|w\|.
\end{eqnarray*}
Taking $w = \bar v^{\br{k}}-\widetilde{v}^{(k,\ell+1)}$ we deduce from  \eqref{Estimate_h_k_b} that
\begin{eqnarray}\label{One_Correction_2}
\|\bar v^{\br{k}}-\widetilde{v}^{(k,\ell+1)}\| &\leq&\frac{C_1}{\sqrt{\lambda_1}}\eta (\V^{\br{1}})\| \bar v^{\br{k}}-v^{(k,\ell)}\|. 
\end{eqnarray}
Using \eqref{Contraction_Rate} and \eqref{One_Correction_2} we deduce that
\begin{eqnarray}
\|\bar v^{\br{k}}-\widehat{v}^{(k,\ell+1)}\|&\leq& \|\bar v^{\br{k}}-\widetilde{v}^{(k,\ell+1)}\|  + \|\widetilde{v}^{(k,\ell+1)}-\widehat{v}^{(k,\ell+1)}\|\nonumber\\
&\leq& \|\bar v^{\br{k}}-\widetilde{v}^{(k,\ell+1)}\|  + \theta \|\widetilde{v}^{(k,\ell+1)}-v^{(k,\ell)}\|\nonumber\\
&\leq& \|\bar v^{\br{k}}-\widetilde{v}^{(k,\ell+1)}\|  + \theta \|\widetilde{v}^{(k,\ell+1)}- \bar {v}^{(k)}\|
+\theta\| \bar {v}^{(k)}-v^{(k,\ell)}\|\nonumber\\
&\leq& (1+\theta)\|\bar v^{\br{k}}-\widetilde{v}^{(k,\ell+1)}\|+\theta\| \bar {v}^{(k)}-v^{(k,\ell)}\|\nonumber\\
&\leq& \Big(\theta+(1+\theta)\frac{C_1}{\sqrt{\lambda_1}}\eta (\V^{\br{1}})\Big)\| \bar{v}^{(k)}-v^{(k,\ell)}\|.
\end{eqnarray}
The eigenvalue problem \eqref{Eigen_Augment_Problem} can be seen as
a low dimensional subspace approximation of the eigenvalue problem (\ref{Weak_Eigenvalue_Discrete}).
Using  (\ref{Err_Norm_1_Superclose}), Lemmas \ref{Err_Eigen_Global_Lem}, \ref{Error_Superclose_Lemma}, and their proof,
we obtain that
\begin{eqnarray}\label{Error_u_u_h_2}
\|\bar v^{\br{k}}-v^{(k,\ell+1)}\| &\leq& \frac{1}{1-\bar D_\lambda \eta(\V^{\br{1,k}})}\inf_{w^{\br{1,k}}\in
\V^{\br{1,k}}}\|\bar v^{\br{k}}-w^{\br{1,k}}\|\nonumber\\
&\leq& \frac{1}{1-\bar D_\lambda \eta(\V^{\br{1}})}\|\bar v^{\br{k}}-\widehat{v}^{(k,\ell+1)}\|\nonumber\\
&\leq& \gamma \|\bar v^{\br{k}}-v^{(k,\ell)}\|,
\end{eqnarray}
and
\begin{eqnarray}\label{Error_u_u_h_2_Negative}
\|\bar{\lambda}^{\br{k}}\bar v^{\br{k}}-\lambda^{(k,\ell+1)} v^{(k,\ell+1)}\|_0&\leq&
\bar C_\lambda\eta(\V^{\br{1,k}})\|\bar v^{\br{k}}-v^{(k,\ell+1)}\|\nonumber\\
&\leq& \bar C_\lambda\eta(\V^{\br{1}})\|\bar v^{\br{k}}-v^{(k,\ell+1)}\|.
\end{eqnarray}
Then we have the desired results (\ref{Estimate_h_k_1_a}) and (\ref{Estimate_h_k_1_b})
and conclude the proof.
\end{proof}
\begin{remark}
Definition (\ref{Gamma_Definition}),  Theorem \ref{Convergence_MG_k_Theorem},
Lemmas  \ref{Err_Eigen_Global_Lem} and \ref{Error_Superclose_Lemma} imply that $\gamma$ is less than $1$ when $\eta(\V^{\br{1}})$ is small enough. If $\lambda$ is large or the spectral gap $\delta_\lambda$ is small, then we need to use a smaller $\eta(\V^{\br{1}})$ or $H$. Furthermore, we can
increase the multigrid smoothing steps $m_1$ and $m_2$ to reduce $\theta$ and then $\gamma$. These theoretical restrictions do not limit practical applications where (in numerical implementations), $H$ is simply chosen (just) small enough so that the  number of elements of corresponding coarsest space (just) exceeds  the required number of eigenpairs  ($H$ and the coarsest space are adapted to the  number of eigenpairs to be computed).
\end{remark}


\subsection{Multilevel Method for Eigenvalue Problem}
In this subsection, we introduce the multilevel method
based on the subspace correction method defined in Algorithm \ref{Multigrid_Smoothing_Step} and
the properties of gamblet spaces. This multilevel method can achieve
the same order of accuracy as the direct solve of the eigenvalue problem on the finest (gamblet) space.
The multilevel method is presented in Algorithm \ref{Full_Multigrid}.

\begin{algorithm}[ht]
\caption{Multilevel Correction Scheme}
\begin{enumerate}
\item Define the following eigenvalue problem in $\V^{\br{1}}$:
Find $(\lambda^{\br{1}}, v^{\br{1}})\in \R\times \V^{\br{1}}$ such that $\langle v^{\br{1}},v^{\br{1}}\rangle=1$ and
\begin{equation*}
\langle v^{\br{1}}, w^{\br{1}}\rangle = \lambda^{\br{1}} [v^{\br{1}}, w^{\br{1}}], \quad \forall w^{\br{1}}\in  \V^{\br{1}}.
\end{equation*}
$(\lambda^{\br{1}},v^{\br{1}})\in\R\times \V^{\br{1}}$ is the initial eigenpair approximation.
\item For $k=2,\cdots,q$, do the following iterations
\begin{itemize}
\item Set $\lambda^{(k,0)}=\lambda^{\br{k-1}}$ and $v^{(k,0)} = v^{\br{k-1}}$.
\item Perform the following subspace correction steps for $\ell=0,\cdots, \varpi-1$:
\begin{eqnarray*}
(\lambda^{(k,\ell+1)}, v^{(k,\ell+1)})=
 {\texttt{EigenMG}}(\V^{\br{1}},\lambda^{(k,\ell)},v^{(k,\ell)},\V^{\br{k}}).
\end{eqnarray*}
\item Set $\lambda^{\br{k}}=\lambda^{(k,\varpi)}$ and $v^{\br{k}}=v^{(k,\varpi)}$.
\end{itemize}
End Do
\end{enumerate}
Finally, we obtain an eigenpair approximation
$(\lambda^{\br{q}},v^{\br{q}})\in \R\times \V^{\br{q}}$ in the finest gamblet space.
\label{Full_Multigrid}
\end{algorithm}

\begin{theorem}\label{Error_Full_Multigrid_Theorem}
After implementing Algorithm \ref{Full_Multigrid}, the resulting
eigenpair approximation $(\lambda^{\br{q}},v^{\br{q}})$ has the following error estimates
\begin{eqnarray}
\|\bar v^{\br{q}}-v^{\br{q}}\| &\leq&
2\sum_{k=1}^{q-1} \gamma^{(q-k)\varpi}\delta_{k}(\lambda),\label{FM_Err_fun}\\
\|\bar v^{\br{q}}-v^{\br{q}}\|_0 &\leq& 2\Big(1+\frac{1}{\lambda_1\delta_\lambda}\Big)\eta(\V^{\br{1}})
\|\bar v^{\br{q}} - v^{\br{q}}\|, \label{FM_Err_0_Norm}\\
|\bar{\lambda}^{\br{q}}-\lambda^{\br{q}}| &\leq& \lambda^{\br{q}}\|v^{\br{q}} - \bar v^{\br{q}}\|^2,\label{FM_Err_eigen}
\end{eqnarray}
where $\varpi$ is the number of subspace correction steps in Algorithm \ref{Full_Multigrid}.
\end{theorem}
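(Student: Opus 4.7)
I would argue by induction on the level $k$ using the one-correction contraction of Theorem \ref{Error_Estimate_One_Smoothing_Theorem}. Set $e_k := \|\bar v^{(k)} - v^{(k)}\|$. Since the initialization at level $k$ reads $v^{(k,0)} = v^{(k-1)}$ and the latter lies in $\V^{(k-1)} \subset \V^{(k)}$, each of the $\varpi$ correction sweeps contracts the error by a factor $\gamma$, giving
\[
e_k \leq \gamma^{\varpi}\,\|\bar v^{(k)} - v^{(k-1)}\| \leq \gamma^{\varpi}\bigl(\|\bar v^{(k)} - \bar v^{(k-1)}\| + e_{k-1}\bigr).
\]
With $e_1 = 0$ (the coarsest eigenproblem is solved exactly), unrolling this linear recursion telescopes into
\[
e_q \leq \sum_{k=1}^{q-1} \gamma^{(q-k)\varpi}\,\|\bar v^{(k+1)} - \bar v^{(k)}\|,
\]
and \eqref{FM_Err_fun} will follow once $\|\bar v^{(k+1)} - \bar v^{(k)}\| \leq 2\delta_{k}(\lambda)$ is established.

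\textbf{The inter-level jump and hypothesis propagation.} To bound $\|\bar v^{(k+1)} - \bar v^{(k)}\|$, I would pass through the exact eigenfunction $v$ via the triangle inequality, bound each term by \eqref{Err_Norm_1} of Lemma \ref{Error_Superclose_Lemma}, and use the nesting $\V^{(k)} \subset \V^{(k+1)}$ to get $\delta_{k+1}(\lambda) \leq \delta_{k}(\lambda)$, producing the factor $2$. Independently, for Theorem \ref{Error_Estimate_One_Smoothing_Theorem} to apply at each level, the residual hypothesis \eqref{Estimate_h_k_b} must hold after the reinitialization $v^{(k,0)} = v^{(k-1)}$; I would verify this by splitting
\[
\|\bar\lambda^{(k)}\bar v^{(k)} - \lambda^{(k-1)} v^{(k-1)}\|_0 \leq \|\bar\lambda^{(k)}\bar v^{(k)} - \bar\lambda^{(k-1)}\bar v^{(k-1)}\|_0 + \|\bar\lambda^{(k-1)}\bar v^{(k-1)} - \lambda^{(k-1)} v^{(k-1)}\|_0,
\]
controlling the first piece via \eqref{Err_Norm_0_Lambda} applied between the two nested discretizations, and the second by the induction hypothesis at level $k-1$ with constant $\bar C_\lambda$. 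Thus a single constant $C_1 = \bar C_\lambda$ can be propagated through all levels.

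\textbf{The $L^2$ and eigenvalue estimates.} For \eqref{FM_Err_0_Norm} I would replay the duality argument of \eqref{L2_Energy_Estiate} adapted to the subspace eigenproblem \eqref{Eigen_Augment_Problem}: since $v^{(q)}$ is the Galerkin eigenfunction on $\V^{(1,q)} \supset \V^{(1)}$, testing the error equation against $\L^{-1}(\bar v^{(q)} - v^{(q)})$ and invoking \eqref{eta_a_h_Def} with $\V^{(1)}$ as the trial space brings out the factor $\eta(\V^{(1)})$; the constant $2(1 + 1/(\lambda_1 \delta_\lambda))$ then emerges exactly as in the derivation of \eqref{Err_Eigenfunction_Global_0_Norm}. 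For \eqref{FM_Err_eigen}, I would apply Lemma \ref{Rayleigh_quotient_expansion_lem} with $w = v^{(q)}$ (normalized so that $\langle w,w\rangle/[w,w] = \lambda^{(q)}$) and $u = \bar v^{(q)}$; the non-positive $L^2$-type term may be discarded since $\lambda^{(q)} \geq \bar\lambda^{(q)}$ (because $\V^{(1,q)} \subset \V^{(q)}$), and the relation $[v^{(q)}, v^{(q)}] = 1/\lambda^{(q)}$ converts the remaining Rayleigh quotient into $\lambda^{(q)}\|v^{(q)} - \bar v^{(q)}\|^2$.

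\textbf{Main obstacle.} The principal technical difficulty is the uniform bookkeeping across levels: ensuring that the constants $C_1$, $\bar C_\lambda$, $\bar D_\lambda$ and the contraction factor $\gamma$ from Theorem \ref{Error_Estimate_One_Smoothing_Theorem} can be chosen independently of $k$ (in particular $\gamma < 1$), so that the telescoping sum is genuinely geometric and produces the claimed bound. This is secured by taking $\eta(\V^{(1)}) = \mathcal{O}(H)$ small enough, after which the remainder of the argument reduces to a routine geometric-series manipulation and two short post-processing computations for the $L^2$ and eigenvalue estimates.
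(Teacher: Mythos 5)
Your proposal follows the paper's proof essentially step for step: the same seeded recursion $e_1=0$, $e_k \leq \gamma^\varpi(\|\bar v^{(k)} - \bar v^{(k-1)}\| + e_{k-1})$, the same triangle split through the exact eigenfunction $v$ to bound the inter-level jump by $2\delta_{k-1}(\lambda)$ and telescope to \eqref{FM_Err_fun}, the same propagation of the residual hypothesis \eqref{Estimate_h_k_b} with $C_1=\bar C_\lambda$, and the same invocations of the Rayleigh-quotient expansion (Lemma \ref{Rayleigh_quotient_expansion_lem}) and the $L^2$ duality estimate for \eqref{FM_Err_eigen} and \eqref{FM_Err_0_Norm}. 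The only visible difference is that you spell out the level-transition verification of \eqref{Estimate_h_k_b} a little more explicitly, where the paper disposes of it by ``recursive argument,'' so this is the same proof.
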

\begin{proof}
Define $e_{k}:=\bar v^{\br{k}}-v^{\br{k}}$. From step 1
in Algorithm \ref{Full_Multigrid}, it is obvious $e_1=0$.
Then the assumption (\ref{Estimate_h_k_b}) in
Theorem \ref{Error_Estimate_One_Smoothing_Theorem} is satisfied for $k=1$. From the definitions of Algorithms \ref{Multigrid_Smoothing_Step} and \ref{Full_Multigrid},
Theorem \ref{Error_Estimate_One_Smoothing_Theorem} and recursive argument, the assumption (\ref{Estimate_h_k_b})
holds for each level of space $\V^{\br{k}}$ ($k=1,\cdots,q$) with $C_1 = \bar C_\lambda$ in (\ref{Definition_C_Bar}).
Then the convergence rate (\ref{Estimate_h_k_1_a}) is valid for all $k=1, \cdots, q$ and $\ell = 0, \cdots, \varpi-1$.

For $k=2,\cdots,q$, by 
Theorem \ref{Error_Estimate_One_Smoothing_Theorem} and recursive argument,  we have
\begin{eqnarray}\label{FM_Estimate_1}
\|e_k\|&\leq& \gamma^\varpi\|\bar v^{\br{k}}- v^{\br{k-1}}\|\nonumber\\
&\leq& \gamma^\varpi\big(\|\bar v^{\br{k}}-\bar v^{\br{k-1}}\|
+\|\bar v^{\br{k-1}}-v^{\br{k-1}}\|\big)\nonumber\\
&\leq& \gamma^\varpi\big(\|\bar v^{\br{k}}-v\|+\|v-\bar v^{\br{k-1}}\|
+\|\bar v^{\br{k-1}}-v^{\br{k-1}}\|\big)\nonumber\\
&=& \gamma^\varpi \big(\delta_k(\lambda)+\delta_{k-1}(\lambda)+\|e_{k-1}\|\big)\nonumber\\
&\leq& \gamma^\varpi \big(2\delta_{k-1}(\lambda)+\|e_{k-1}\|\big).
\end{eqnarray}

By iterating inequality (\ref{FM_Estimate_1}), the following
inequalities hold
\begin{eqnarray}
\|e_q\|\leq 2\big(\gamma^\varpi\delta_{q-1}(\lambda)+ 
\cdots +\gamma^{(q-1)\varpi}\delta_{1}(\lambda)\big)
\leq 2\sum_{k=1}^{q-1} \gamma^{(q-k)\varpi}\delta_{k}(\lambda).
\end{eqnarray}
which leads to the desired result (\ref{FM_Err_fun}).

From \eqref{Rayleigh_quotient_expansion}, \eqref{Error_2}, \eqref{Error_3} and  \eqref{FM_Err_fun}, we have the following error estimates
\begin{eqnarray*}
\|\bar v^{\br{q}} - v^{\br{q}}\|_0 &\leq& 2\Big(1+\frac{1}{\lambda_1\delta_\lambda}\Big)\eta(\V^{\br{1}})
\|\bar v^{\br{q}} - v^{\br{q}}\|,\nonumber\\
|\bar\lambda^{\br{q}}-\lambda^{\br{q}}| &\leq& \frac{\|v^{\br{q}} - \bar v^{\br{q}}\|^2}{\|v^{\br{q}}\|_0^2}
\leq \lambda^{\br{q}}\|v^{\br{q}} - \bar v^{\br{q}}\|^2,
\end{eqnarray*}
which are the desired results (\ref{FM_Err_0_Norm}) and (\ref{FM_Err_eigen}).

\end{proof}

\begin{remark}
The proof of Theorem \ref{Error_Full_Multigrid_Theorem} implies that the assumption (\ref{Estimate_h_k_b}) in Theorem \ref{Error_Estimate_One_Smoothing_Theorem}
holds for $C_1=\bar C_\lambda$ in each level of space $\V^{\br{k}}$ ($k=1, \cdots, q$).
The structure of Algorithm \ref{Full_Multigrid}, implies that $\bar C_\lambda$ does not change
as the algorithm progresses from the initial space $\V^{\br{1}}$ to the finest one $\V^{\br{q}}$.
\end{remark}

\begin{Corollary}
Let $\gamma$ be the constant in \eqref{Gamma_Definition}. Given the uniform contraction rate $0<\theta<1$ (obtained from Theorem \ref{Convergence_MG_k_Theorem}) and given the bound   $\eta(\V^{\br{1}})\leq C H$ (obtained from Property \ref{property:optimaldecomposition}, which is implied by Theorem \ref{thmgam}) select
 $0<H<1$ small enough so that  $0<\gamma<1$  and then choose the integer $\varpi>1$ to satisfy
\begin{equation}\label{Convergence_Condition}
\frac{\gamma^{\varpi}}{H}<1\,.
\end{equation}
Then the resulting eigenpair approximation $(\lambda^{\br{q}},v^{\br{q}})$
obtained by Algorithm \ref{Full_Multigrid} has  the following error estimates
\begin{eqnarray}\label{FM_Err_fun_Final}
\|v -v^{\br{q}}\| &\leq& C {C_{\lambda}^\prime} \sqrt{\lambda} H^q,\label{FM_Err_fun_Norm_1_Final}\\
\|v-v^{\br{q}}\|_0 &\leq& 2C^2\left( \Big(1+\frac{1}{\lambda_1\delta_\lambda}\Big)\Big(1+H^{q-1}\Big)\right){C_{\lambda}^\prime} H^q,\label{FM_Err_fun_Norm_0_Final}\\
 |\lambda-\lambda^{\br{q}}| &\leq&  \lambda\lambda^{\br{q}} (CC_{\lambda}^\prime)^2H^{2q},\label{FM_Err_Eigenvalue_Final}
\end{eqnarray}
where the constant $C$ comes from Property \ref{property:optimaldecomposition} or Proposition \ref{propCondition_1}
and ${C_{\lambda}^\prime}$ is defined as follows
\begin{eqnarray*}
{C_{\lambda}^\prime} = \left(\sqrt{2\lambda\Big(1+\frac{1}{\lambda_1\delta_\lambda^2}\eta^2(\V^{\br{q}})\Big)}+ 2\frac{1-\Big(\frac{\gamma^\varpi}{H}\Big)^q}{1-\frac{\gamma^\varpi}{H}}\right).
\end{eqnarray*}
\end{Corollary}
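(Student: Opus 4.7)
The plan is to derive the three estimates by combining Theorem \ref{Error_Full_Multigrid_Theorem} (which controls the gap between the multilevel approximation $v^{(q)}$ and the Galerkin eigenpair $\bar v^{(q)}$) with the classical subspace approximation bounds of Lemmas \ref{Err_Eigen_Global_Lem} and \ref{Error_Superclose_Lemma} (which control the gap between $\bar v^{(q)}$ and the exact eigenpair $v$), then to feed in the quantitative bounds on $\eta(\V^{(k)})$ and $\delta_k(\lambda)$ supplied by Proposition \ref{propCondition_1}.

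First I would estimate $\|v - v^{(q)}\|$ via the triangle inequality
\[
\|v - v^{(q)}\| \leq \|v - \bar v^{(q)}\| + \|\bar v^{(q)} - v^{(q)}\|.
\]
For the first term I apply \eqref{Err_Norm_1} of Lemma \ref{Error_Superclose_Lemma} with $(\bar \lambda^{(k)}, \bar v^{(k)})$ replaced by $(\bar \lambda^{(q)}, \bar v^{(q)})$, combined with $\|(I-\mathcal P_q)v\| \leq \delta_q(\lambda)$ and Proposition \ref{propCondition_1}, to obtain
\[
\|v - \bar v^{(q)}\| \leq C\sqrt{2\lambda\bigl(1+\tfrac{1}{\lambda_1\delta_\lambda^2}\eta^2(\V^{(q)})\bigr)}\,H^q.
\]
For the second term I insert the bounds $\delta_k(\lambda) \leq C\sqrt{\lambda}\,H^k$ from Proposition \ref{propCondition_1} into \eqref{FM_Err_fun} and extract a factor $H^q$:
\[
\|\bar v^{(q)} - v^{(q)}\| \leq 2C\sqrt{\lambda}\sum_{k=1}^{q-1}\gamma^{(q-k)\varpi}H^k = 2C\sqrt{\lambda}\,H^q\sum_{j=1}^{q-1}\Bigl(\tfrac{\gamma^\varpi}{H}\Bigr)^j.
\]
The geometric sum converges under the standing hypothesis \eqref{Convergence_Condition} that $\gamma^\varpi/H<1$, and is bounded by $\tfrac{1-(\gamma^\varpi/H)^q}{1-\gamma^\varpi/H}$. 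Summing the two contributions yields exactly the factor $C_\lambda'$ in the statement, establishing \eqref{FM_Err_fun_Norm_1_Final}.

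Next I would derive the $L^2$ estimate \eqref{FM_Err_fun_Norm_0_Final} by another triangle inequality, using \eqref{Err_Eigenfunction_Global_0_Norm} on $\|v - \bar v^{(q)}\|_0$ (with $\eta(\V^{(q)})\leq CH^q$) and \eqref{FM_Err_0_Norm} on $\|\bar v^{(q)} - v^{(q)}\|_0$ (with $\eta(\V^{(1)})\leq CH$); both terms carry the same $(1+\tfrac{1}{\lambda_1\delta_\lambda})$ prefactor and, after factoring $\eta$ and the just-proved $H^q$ bound on $\|v - v^{(q)}\|$ out, yield the claimed $(1+H^{q-1})$ combination.

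Finally I handle the eigenvalue estimate \eqref{FM_Err_Eigenvalue_Final} through the Rayleigh quotient expansion (Lemma \ref{Rayleigh_quotient_expansion_lem}) applied to $w = v^{(q)}$ and $u=v$, noting that the normalization $\langle v^{(q)},v^{(q)}\rangle = 1$ at the end of Algorithm \ref{Multigrid_Smoothing_Step} gives $\|v^{(q)}\|_0^2 = 1/\lambda^{(q)}$; dropping the nonnegative second term in \eqref{Rayleigh_quotient_expansion} and squaring the already-proved $H^q$ bound furnishes the $H^{2q}$ decay.

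I expect no deep obstacle — the argument is essentially careful bookkeeping — but the delicate step is the geometric sum on $\gamma^{(q-k)\varpi}\delta_k(\lambda)$: one has to trade the decreasing factor $\gamma^{(q-k)\varpi}$ against the increasing $H^k$, and this trade works only because the hypothesis \eqref{Convergence_Condition} forces the ratio $\gamma^\varpi/H$ below one, so that $H^q$ can be factored out uniformly while the remaining series stays bounded by the closed-form expression appearing in $C_\lambda'$.
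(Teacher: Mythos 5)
Your proposal is correct and follows essentially the same route as the paper: the same triangle-inequality split of $\|v-v^{\br{q}}\|$ into the Galerkin error (bounded via \eqref{Err_Norm_1} and $\delta_q(\lambda)\le C\sqrt{\lambda}H^q$) and the multilevel correction error (bounded via \eqref{FM_Err_fun} and the geometric sum controlled by \eqref{Convergence_Condition}), followed by the same treatment of the $\|\cdot\|_0$ estimate and the Rayleigh-quotient argument for the eigenvalue. The only cosmetic difference is that your geometric sum starts at $j=1$ rather than $k=0$, which is harmless since both are dominated by the same closed-form bound appearing in $C_\lambda'$.
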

\begin{proof}
From Lemma \ref{Error_Superclose_Lemma}, Theorem \ref{Error_Full_Multigrid_Theorem}, 
(\ref{eqCondition_1}), (\ref{Err_Norm_1}) and (\ref{Convergence_Condition}),
we have the following estimates
\begin{eqnarray}\label{Estimate_Final}
&&\|v-v^{\br{q}}\|  \leq \| v - \bar v^{\br{q}}\| + \|\bar v^{\br{q}}-v^{\br{q}}\|\nonumber\\
&\leq& \sqrt{2\Big(1+\frac{1}{\lambda_1\delta_\lambda^2}\eta^2(\V^{\br{q}})\Big)}\delta_q(\lambda) + 2\sum_{k=1}^{q-1} \gamma^{(q-k)\varpi}\delta_{k}(\lambda)\nonumber\\
&\leq& C\sqrt{2\Big(1+\frac{1}{\lambda_1\delta_\lambda^2}\eta^2(\V^{\br{q}})\Big)}\sqrt{\lambda}H^q + 2C\sum_{k=1}^{q-1}\gamma^{(q-k)\varpi}\sqrt{\lambda}H^k\nonumber\\
&\leq& C\sqrt{2\lambda\Big(1+\frac{1}{\lambda_1\delta_\lambda^2}\eta^2(\V^{\br{q}})\Big)}\sqrt{\lambda}H^q+ 2C\sqrt{\lambda}H^q\sum_{k=0}^{q-1} \Big(\frac{\gamma^\varpi}{H}\Big)^k\nonumber\\
&\leq& C\left(\sqrt{2\lambda\Big(1+\frac{1}{\lambda_1\delta_\lambda^2}\eta^2(\V^{\br{q}})\Big)}+ 2\frac{1-\Big(\frac{\gamma^\varpi}{H}\Big)^q}{1-\frac{\gamma^\varpi}{H}}\right)\sqrt{\lambda}H^q.
\end{eqnarray}
This is the desired result (\ref{FM_Err_fun_Norm_1_Final}).

From (\ref{eqCondition_1}), (\ref{Error_2}), \eqref{FM_Err_fun}, \eqref{FM_Err_0_Norm}
and \eqref{Estimate_Final}, $\|v- v^{\br{q}}\|_0$ has the following
estimates  
\begin{eqnarray*}
&&\|v- v^{\br{q}}\|_0 \leq \|v-\bar v^{\br{q}}\|_0 + \|\bar v^{\br{q}}-v^{\br{q}}\|_0\nonumber\\
&\leq& 2\Big(1+\frac{1}{\lambda_1\delta_\lambda}\Big)\eta(\V^{\br{q}})\|v- \bar v^{\br{q}}\|
+  2\Big(1+\frac{1}{\lambda_1\delta_\lambda}\Big)\eta(\V^{\br{1}})
\|\bar v^{\br{q}} - v^{\br{q}}\|\nonumber\\
&\leq& 2C\Big(1+\frac{1}{\lambda_1\delta_\lambda}\Big)\eta(\V^{\br{q}})
\sqrt{2\Big(1+\frac{1}{\lambda_1\delta_\lambda^2}\eta^2(\V^{\br{q}})\Big)}H^q \nonumber\\
&&+ 4C\Big(1+\frac{1}{\lambda_1\delta_\lambda}\Big)\eta(\V^{\br{1}})
\frac{1-\Big(\frac{\gamma^\varpi}{H}\Big)^q}{1-\frac{\gamma^\varpi}{H}}H^q\nonumber\\
&\leq& 2C\left( \Big(1+\frac{1}{\lambda_1\delta_\lambda}\Big)\Big(1+H^{q-1}\Big)\right){C_{\lambda}^\prime} H^q.
\end{eqnarray*}
From (\ref{Rayleigh_quotient_expansion})
and (\ref{FM_Err_fun_Norm_1_Final}), the error estimate for $|\lambda-\lambda^{\br{q}}|$ can be deduced
as follows
\begin{eqnarray*}
|\lambda-\lambda^{\br{q}}| &\leq& \frac{\|v^{\br{q}} - v\|^2}{\|v^{\br{q}}\|_0^2}
\leq \lambda^{\br{q}}\|v^{\br{q}} -v\|^2
\leq \lambda\lambda^{\br{q}} C^2{C_{\lambda}^\prime}^2H^{2q}.
\end{eqnarray*}
Then  the desired results (\ref{FM_Err_fun_Norm_0_Final}) and (\ref{FM_Err_Eigenvalue_Final}) is obtained
and the proof is complete.

\end{proof}

\begin{remark}
The main computational work of  Algorithm \ref{Multigrid_Smoothing_Step} is to solve the
linear equation (\ref{aux_problem}) by the multigrid method defined in Algorithm \ref{alg:multigrid}. Therefore
Remark \ref{Computational_Work_Proposition} implies the bound $\mathcal O(N(\log(\frac{N}{\varepsilon}))^{2d+1}\log(\varepsilon)/\log(\gamma))$ on the number of operations required to achieve accuracy $\ve$
(see \cite{OwhadiMultigrid:2017,  OwhadiScovel:2017,Owhadi2017a, SchaeferSullivanOwhadi17, OwhScobook2018}).
\end{remark}

\section{Numerical Results}
\label{sec:numerics}
In this section, numerical examples are presented to illustrate the
efficiency of the Gamblet based multilevel correction method for benchmark multiscale eigenvalue problems. 
Furthermore, we will show that the Gamblets can also be used as efficient preconditioner for state-of-the-art eigensolvers such as LOBPCG method. 

\subsection{SPE10}
\label{sec:spe10}

In the first example, we solve the eigenvalue problem (\ref{eqn:scalar}) on  $\Omega = [-1,1]\times [-1,1]$, and the coefficient matrix $a(x)$ is taken from the data of the SPE10 benchmark \\
(\url{http://www.spe.org/web/csp/}). The contrast of $a(x)$ is $\lambda_{\max}(a)/\lambda_{\min}(a) \simeq 1\cdot 10^6$.

The fine mesh $\T_h$ is a regular square mesh with mesh size $h=2(1+2^{q})^{-1}$ and $128\times 128$ interior nodes. At the finest level, we use continuous bilinear nodal basis elements $\varphi_i$ spanned by $\displaystyle\{1,x_1,x_2,x_1 x_2\}$ in each element of $\T_h$. $a(x)$ is piecewise constant over $\T_h$ as illustrated in Figure \ref{fig:spe10}. The measurement function is chosen as in Example \ref{example3}. For the gamblet decomposition, we choose $H = 1/2$, $q=7$. The pre-wavelets $\psi$ and the gamblet decomposition of the solution $u$ for the elliptic equation $-\diiv a(x) \nabla u = \sin(\pi x)\sin(\pi y)$ are shown in Figure \ref{fig:basisa8} and Figure \ref{fig:decompa8}, respectively.

\vspace{-10pt}
\begin{figure}[H]
\centering
\includegraphics[scale = 0.3]{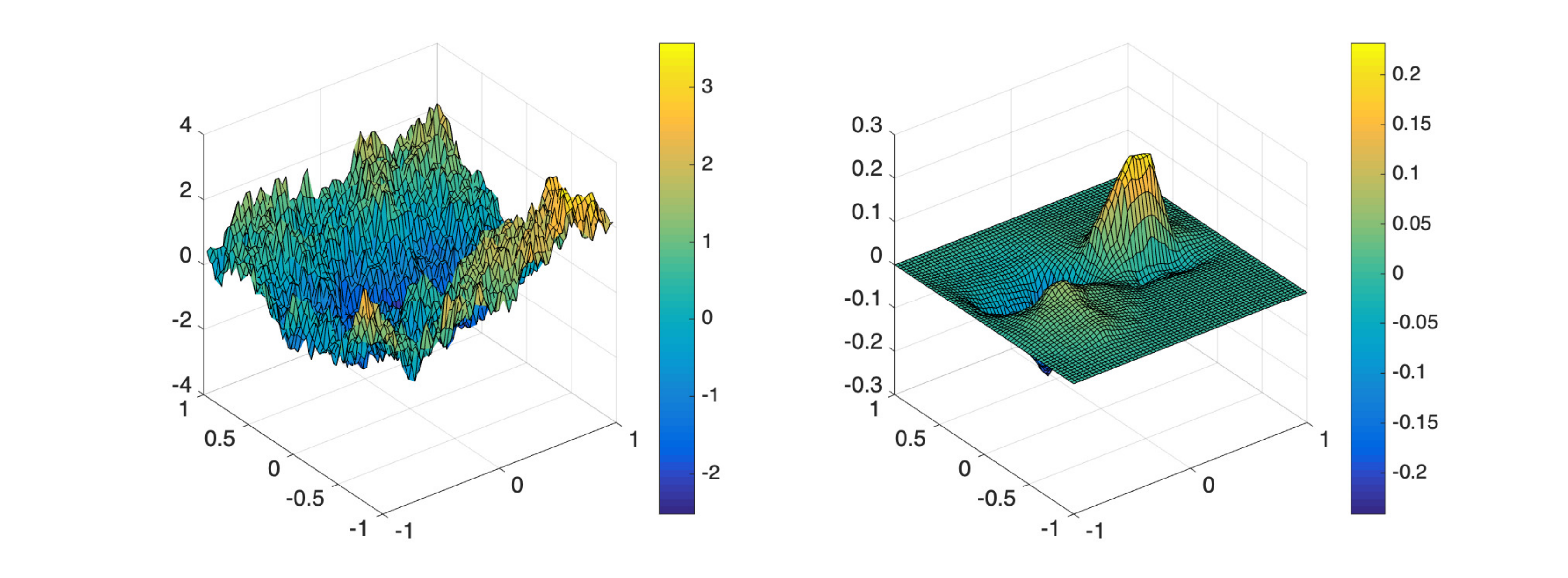}
\caption{Left: coefficient $a(x)$ from SPE10 benchmark, in $\log_{10}$ scale; Right: solution $u$ for the elliptic equation $-\diiv a(x) \nabla u = \sin(\pi x) \sin(\pi y)$.}
\label{fig:spe10}
\end{figure}

\begin{figure}[H]
\centering
\includegraphics[scale = 0.3]{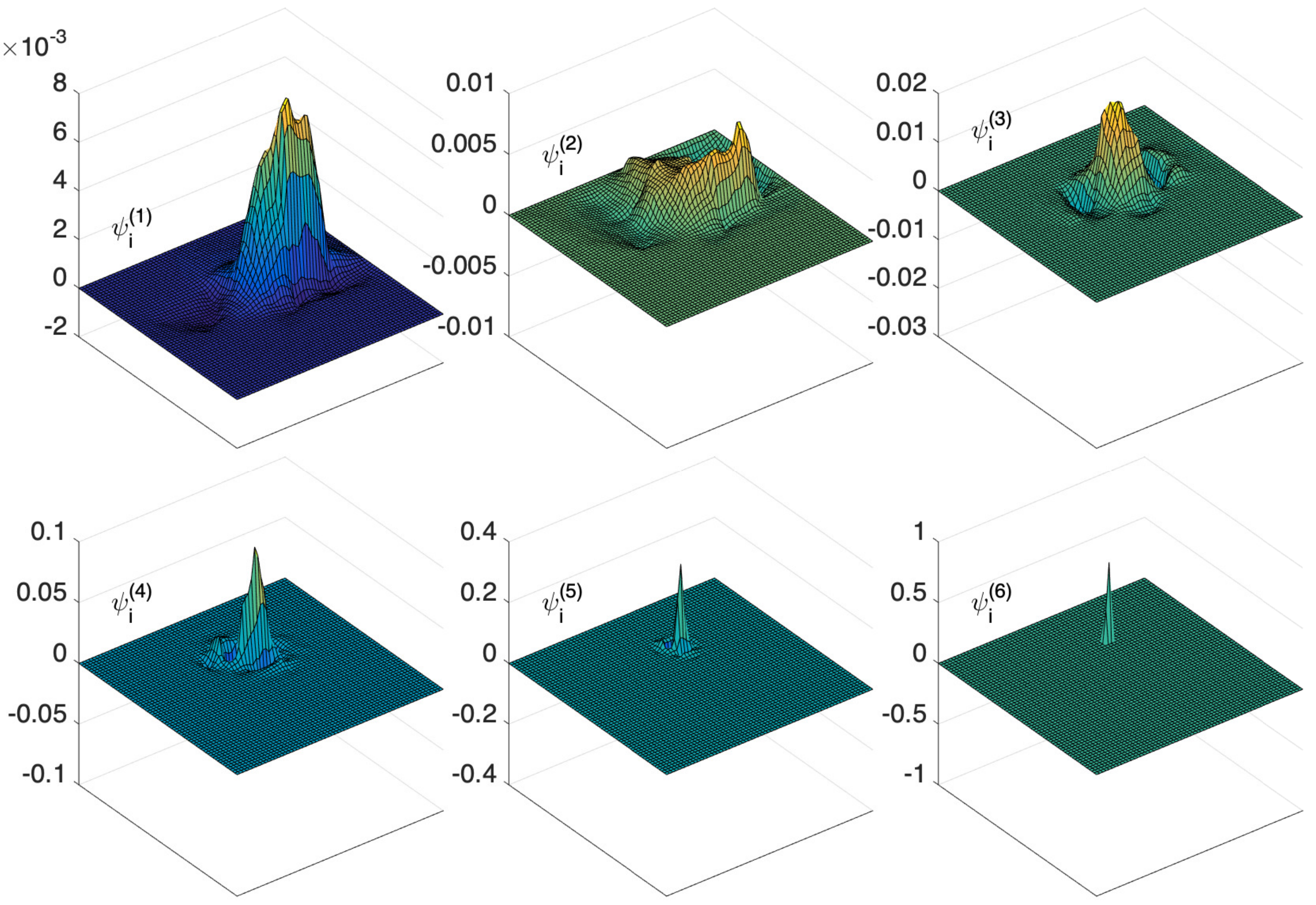}
\caption{Pre-wavelets $\psi$ at different scales.}
\label{fig:basisa8}
\end{figure}

\begin{figure}[H]
\centering
\includegraphics[scale = 0.3]{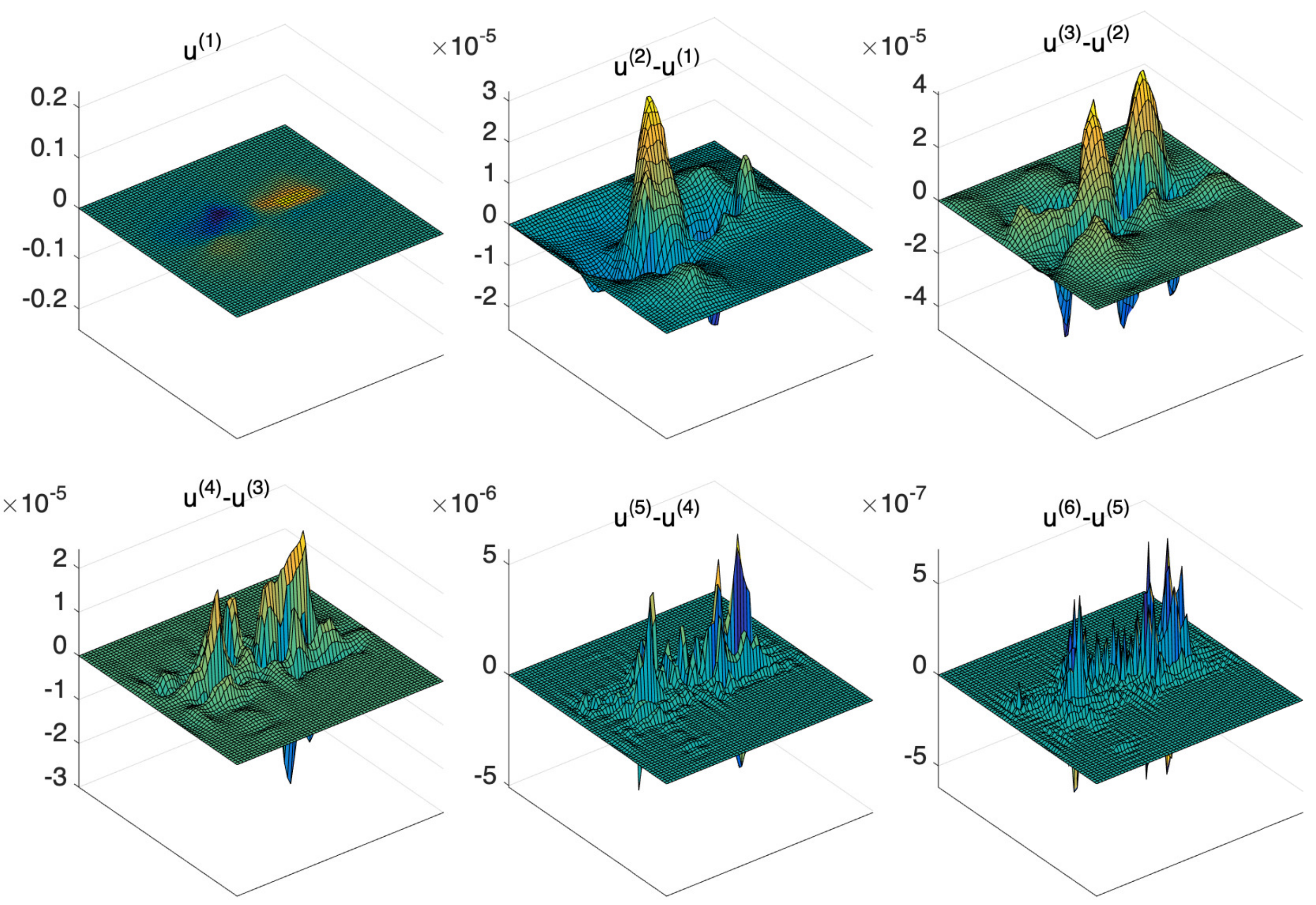}
\caption{Solution for the elliptic equation with $f = \sin(\pi x) \sin(\pi y)$.}
\label{fig:decompa8}
\end{figure}

We calculate the first $12$ eigenvalues using the multilevel correction method in Algorithm \ref{Full_Multigrid}, therefore we actually take $\Vbr{2}$ as the coarsest subspace and the effective mesh size is $H^2 = 1/4$. We choose parameters $m_1=m_2=2$ and $p=1$ in the multigrid iteration step defined in Algorithm \ref{alg:multigrid} to solve the linear equation (\ref{aux_problem}), and use Gauss-Seidel as the smoother.

We compare the gamblet based multilevel correction method with  geometric multigrid multilevel correction method.
In Table \ref{Table_MG_Exam_1}, we show the numerical results for the first 12 eigenvalues, here we take the number of subspace correction steps $\varpi=1$ for $k = 3, \dots, q$. For comparison, we also show the corresponding numerical results in Table \ref{Table_GMG_Exam_1} with the standard geometric multigrid linear solver. We observe much faster convergence for the gamblet based multilevel correction method ($10^6$ smaller for the first eigenvalue).

\begin{table}[H]
\centering
\caption{Relative errors $|(\lambda_i^{(k)}-\lambda_i)/\lambda_i|$ for the gamblet based multilevel correction method, first a few iterations on the coarser levels}\label{Table_MG_Exam_1}.
\begin{tabular}{|c|c|c|c|c|c|c|c|}
\hline
i & k = 2 & k = 3 & k = 4 & k=5 & k = 6 & k = 7 \\
\hline
1   & 6.1568e-2 & 1.3356e-2  & 3.0902e-3 & 1.2586e-3 & 3.8293e-4 & 1.5586e-8  \\
2   & 1.6827e-1 & 3.0270e-2  & 4.6347e-3 & 1.0656e-3 & 2.4616e-4 & 5.3456e-8  \\
3   & 7.9106e-1 & 1.1814e-1  & 2.3155e-2 & 2.8431e-3 & 2.9124e-4 & 4.7883e-6  \\
4   & 5.8274e-1 & 1.9203e-1  & 4.5203e-2 & 7.7621e-3 & 7.7980e-4 & 4.4444e-5  \\
5   & 7.5657e-1 & 1.6533e-1  & 1.6978e-2 & 2.8863e-3 & 3.3941e-4 & 1.0250e-5  \\
6   & 9.4417e-1 & 2.9132e-1  & 5.0443e-2 & 7.0754e-3 & 7.7061e-4 & 4.4771e-5  \\
7   & 1.7033e0  & 2.8337e-1  & 8.1393e-2 & 2.4187e-2 & 4.6014e-3 & 7.2897e-4  \\
8   & 2.4517e0  & 5.0598e-1  & 1.3164e-1 & 2.4945e-2 & 4.6447e-3 & 8.7663e-4  \\
9   & 6.4576e0  & 6.6654e-1  & 2.6205e-1 & 9.9177e-2 & 1.6962e-2 & 3.3086e-3  \\
10 & 6.9955e0  & 6.8507e-1  & 2.4108e-1 & 4.7575e-2 & 1.9529e-2 & 9.6051e-3  \\
11 & 1.0927e1  & 8.6987e-1  & 2.6043e-1 & 7.5851e-2 & 1.9996e-2 & 8.3358e-3  \\
12 & 1.3665e1  & 9.5975e-1  & 3.3355e-1 & 5.9182e-2 & 1.9377e-2 & 7.5015e-3  \\
\hline
\end{tabular}
\end{table}

\begin{Remark}
It is shown in \cite{Malqvist2014a} that for approximate eigenvalues with respect to the LOD coarse spaces on scale $H$, a post-processing step can improve the eigenvalue error from $H^4$ to $H^6$. The post-processing step is a correction with exact solve on the finest level. Since we are using an approximate solve in the correction step, this corresponds to the multilevel correction scheme with one correction step on each level, which is shown in Table \ref{Table_MG_Exam_1}. Comparing Table \ref{Table_MG_Exam_1} with Table 2 in \cite{Malqvist2014a} shows a similar improvement of accuracy at the finer levels (although the coefficients $a(x)$ are not the same, we expect a similar behavior for the approximation errors of eigenvalues). However, with geometric multigrid, the error reduction is very slow, which is shown by Table \ref{Table_GMG_Exam_1}.
\end{Remark}

\begin{table}[ht]
\centering
\caption{Relative errors $|(\lambda_i^{(k)}-\lambda_i)/\lambda_i|$ for multilevel correction with geometric multigrid, first a few iterations on the coarser levels}\label{Table_GMG_Exam_1}
\begin{tabular}{|c|c|c|c|c|c|c|c|}
\hline
i & k = 2 & k = 3 & k = 4 & k=5 & k = 6 & k = 7 \\
\hline
1   & 2.6912e0   & 2.6698e0   & 2.5627e0 & 2.0948e0 & 4.4351e-1 & 5.0859e-2  \\
2   & 2.4310e0   & 2.3886e0   & 2.3037e0 & 1.8812e0 & 4.8645e-1 & 5.4931e-2  \\
3   & 2.3129e0   & 2.2749e0   & 2.1802e0 & 1.8076e0 & 4.9837e-1 & 6.4541e-2  \\
4   & 2.6706e0   & 2.6225e0   & 2.5193e0 & 2.0636e0 & 5.8780e-1 & 9.2958e-2  \\
5   & 3.1593e0   & 2.9673e0   & 2.8141e0 & 2.2948e0 & 6.2242e-1 & 9.8928e-2  \\
6   & 2.7198e0   & 2.5764e0   & 2.4233e0 & 1.9427e0 & 5.3022e-1 & 7.5071e-2  \\
7   & 2.9581e0   & 2.8158e0   & 2.6886e0 & 2.2162e0 & 6.1367e-1 & 9.9160e-2  \\
8   & 2.9712e0   & 2.8012e0   & 2.6446e0 & 2.1981e0 & 6.4002e-1 & 9.3180e-2  \\
9   & 3.7158e0   & 3.2765e0   & 3.0548e0 & 2.4382e0 & 6.8837e-1 & 1.1892e-1  \\
10 & 3.1307e0   & 2.7671e0   & 2.5808e0 & 2.0749e0 & 5.9963e-1 & 8.4462e-2  \\
11 & 3.0937e0   & 2.8429e0   & 2.6748e0 & 2.1673e0 & 5.7858e-1 & 8.8655e-2  \\
12 & 3.1317e0   & 2.7967e0   & 2.6259e0 & 2.1068e0 & 5.8031e-1 & 8.6055e-2  \\
\hline
\end{tabular}

\end{table}

If higher accuracy is pursued, we can take more correction steps at the finest level $k=q$. See Figure \ref{fig:historya8} for the convergence history of both the gamblet based method and the geometric multigrid based method up to $10^{-14}$. The gamblet based method converges much faster than the geometric multigrid based method.

\begin{figure}[H]
\centering
\includegraphics[scale = 0.3]{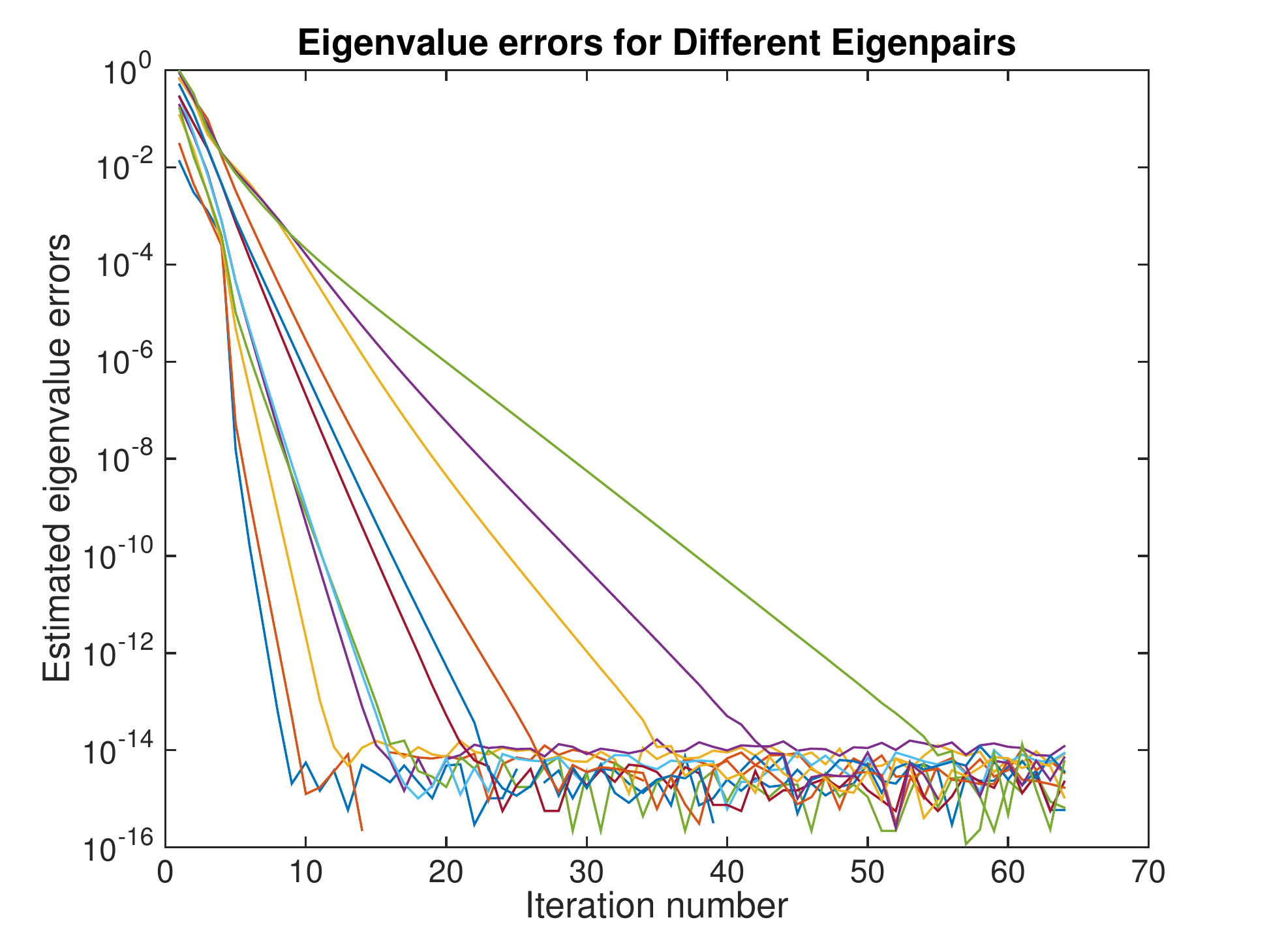}
\includegraphics[scale = 0.3]{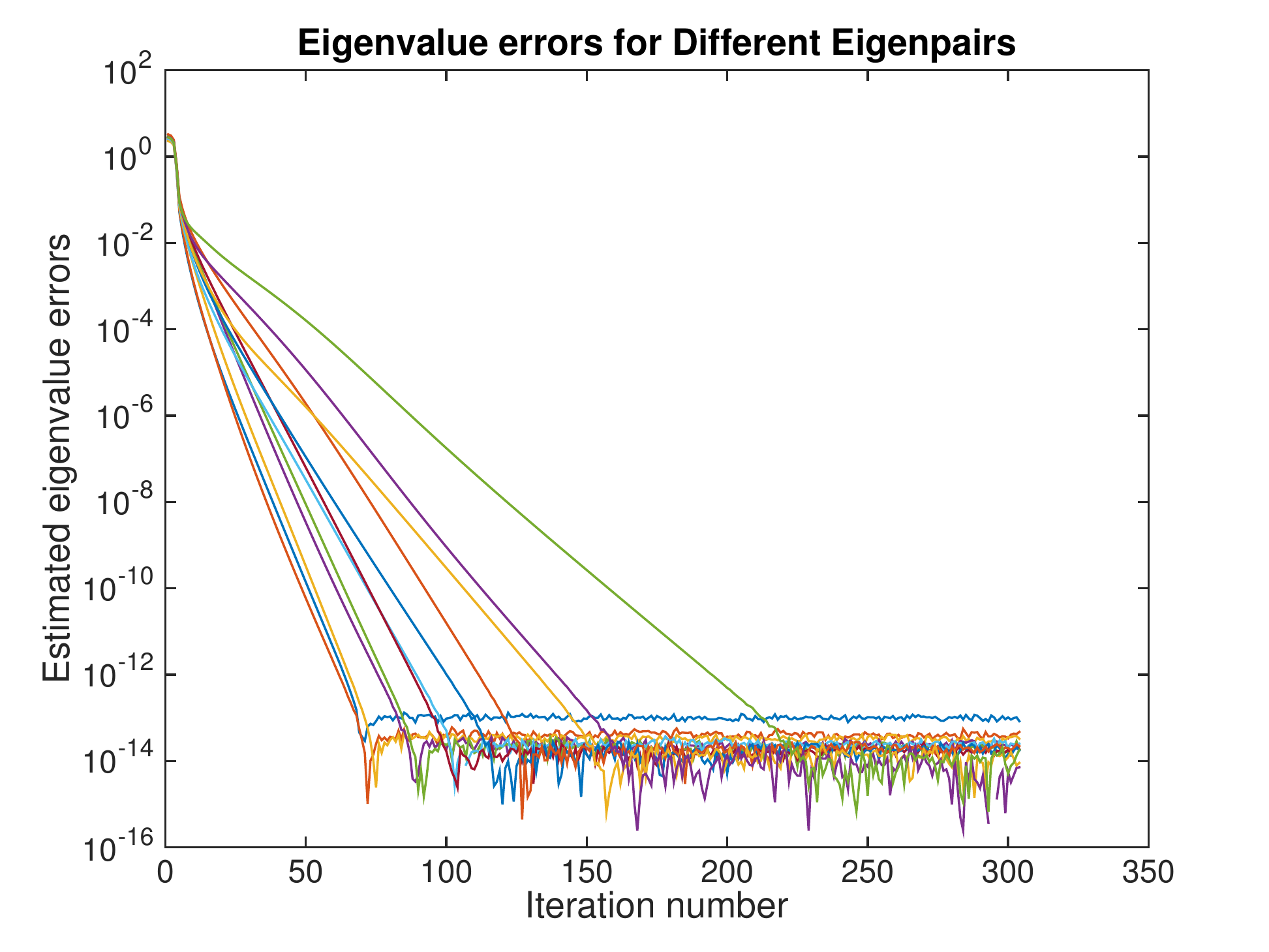}
\caption{Convergence history for first 12 eigenvalues. Left: Gamblet based multilevel method Right: Geometric mutligrid based multilevel method. The iteration number corresponds to the number of correction steps, namely, the outer iteration number. The first a few iterations are on the coarse levels $k = 3, \dots, q-1$, and the following iterations are on the finest level $k = q$.  }
\label{fig:historya8}
\end{figure}

\begin{figure}[H]
\centering
\includegraphics[scale = 0.3]{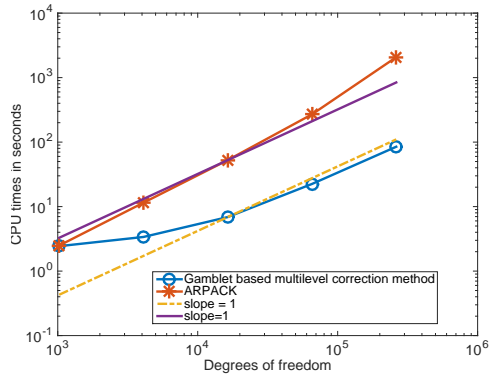}
\caption{CPU time for Gamblet based multilevel correction and ARPACK}
\label{fig:cputime}
\end{figure}

\revise{We now compare the efficiency of the multilevel correction method with the benchmark solver ARPACK (\url{https://www.caam.rice.edu/software/ARPACK/}). We implement the multilevel correction method in C (with a precomputed Gamblet decomposition), and run the code on a machine with two 6-core dual thread Intel Xeon E5-2620 2.00GHz CPUs with 72G memory. We solve for 12 eigenvalues, and stop the multilevel correction method when  relative errors for all eigenvalues are below $10^{-9}$. For comparison, we use the ARPACK library to solve the same eigenvalue problems, and use the geometric multigrid method to solve the corresponding linear systems. The results in Figure \ref{fig:cputime} show that the Gamblet based multilevel correction method achieves a ten-fold acceleration in terms of CPU time. We only plot the ``online" computing time for eigenpairs in Figure \ref{fig:cputime}, the ``offline" precomputing time for the Gamblet decomposition is not included since we only have a Matlab implementation for this part. For the Matlab implementation of the multilevel correction method,  the running time for the ``online" and ``offline" parts are usually proportional, and  the a priori theoretical bound on the complexity of the Gamblets precomputation is $\mathcal{O}(N \ln^{2d+1} N)$.}



\subsection{Random Checkerboard}
In the second example, we consider the eigenvalue problem for the random checkerboard case. Here,  $\Omega = [-1, 1]\times [-1, 1]$
and the matrix $a(x)$ is a realization of random coefficients taking values $20$ or $1/20$ with probability $1/2$
at small scale $\varepsilon=1/64$, see Figure \ref{fig:randomcheckerboard}. The coefficient $a(x)$ has contrast $4\times 10^2$, and is highly oscillatory.

We calculate the first $12$ eigenvalues. The parameters for Algorithm \ref{Full_Multigrid} are $H = 1/2$, $q=7$, and we take $\Vbr{2}$ as the coarsest subspace. We choose $m_1=m_2=2$ and $p=1$, and use Gauss-Seidel as the smoother in Algorithm \ref{alg:multigrid}. We take the number of subspace correction steps $\varpi=1$ for $k = 3, \dots, q-1$, then we run the subspace correction at the finest level $k=q$ until convergence.

\begin{figure}[ht]
\centering
\includegraphics[width=5cm]{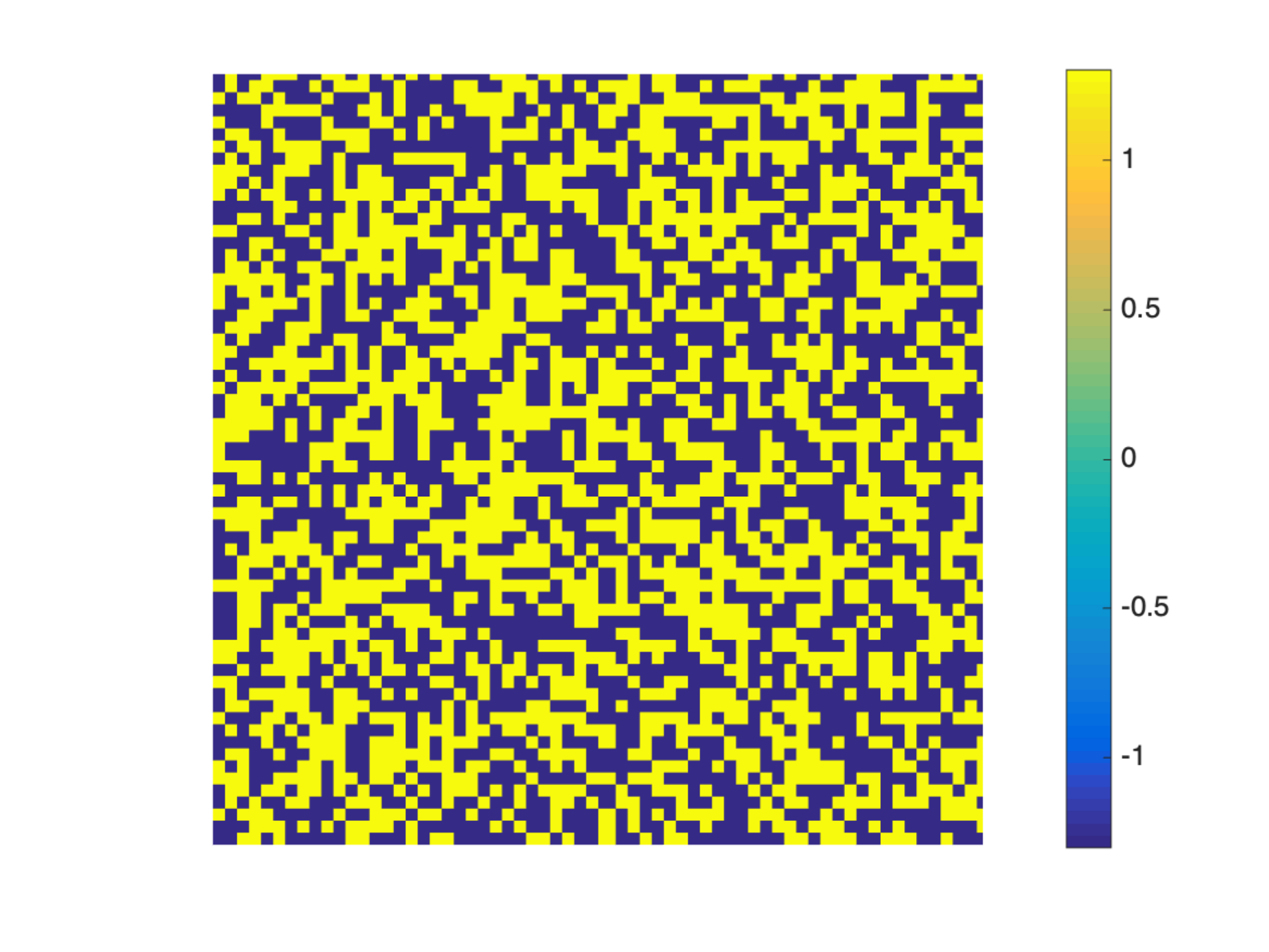}
\caption{Random Checkerboard coefficient, in $\log_{10}$ scale}
\label{fig:randomcheckerboard}
\end{figure}
	The convergence rates  shown in Figure \ref{fig:historya7} suggest a ten fold acceleration \revise{in terms of iteration number} when comparing the gamblet and based multilevel correction method to the geometric multigrid  based multilevel correction method. While it  takes more than 800 iterations for geometric multigrid based multilevel correction method to converge for the first 12 eigenvalues to converge to accuracy $10^{-14}$, the
gamblet based multilevel correction method converges to that accuracy within 70 outer iterations.

\begin{figure}[H]
\centering
\includegraphics[scale = 0.35]{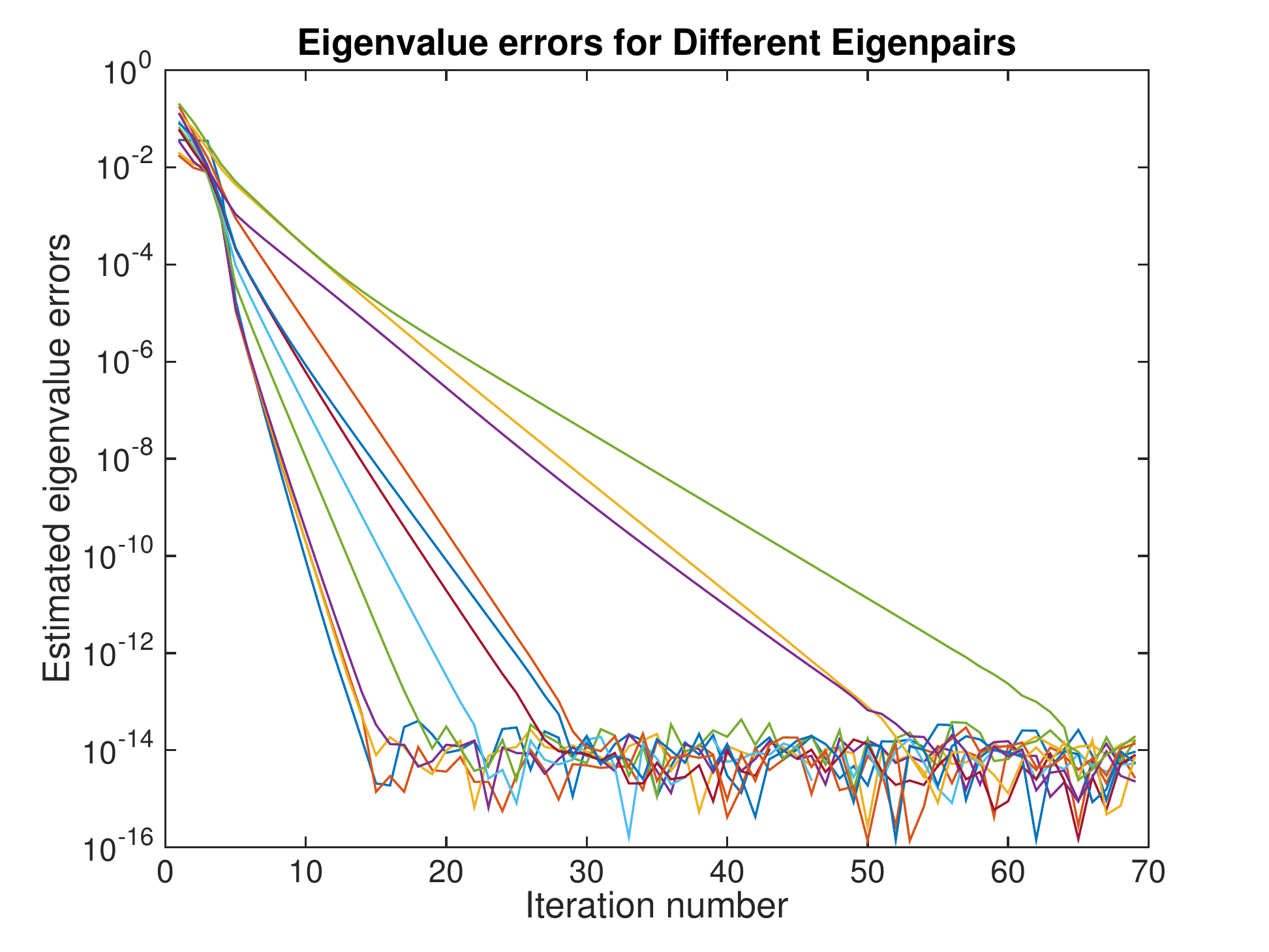}
\includegraphics[scale = 0.35]{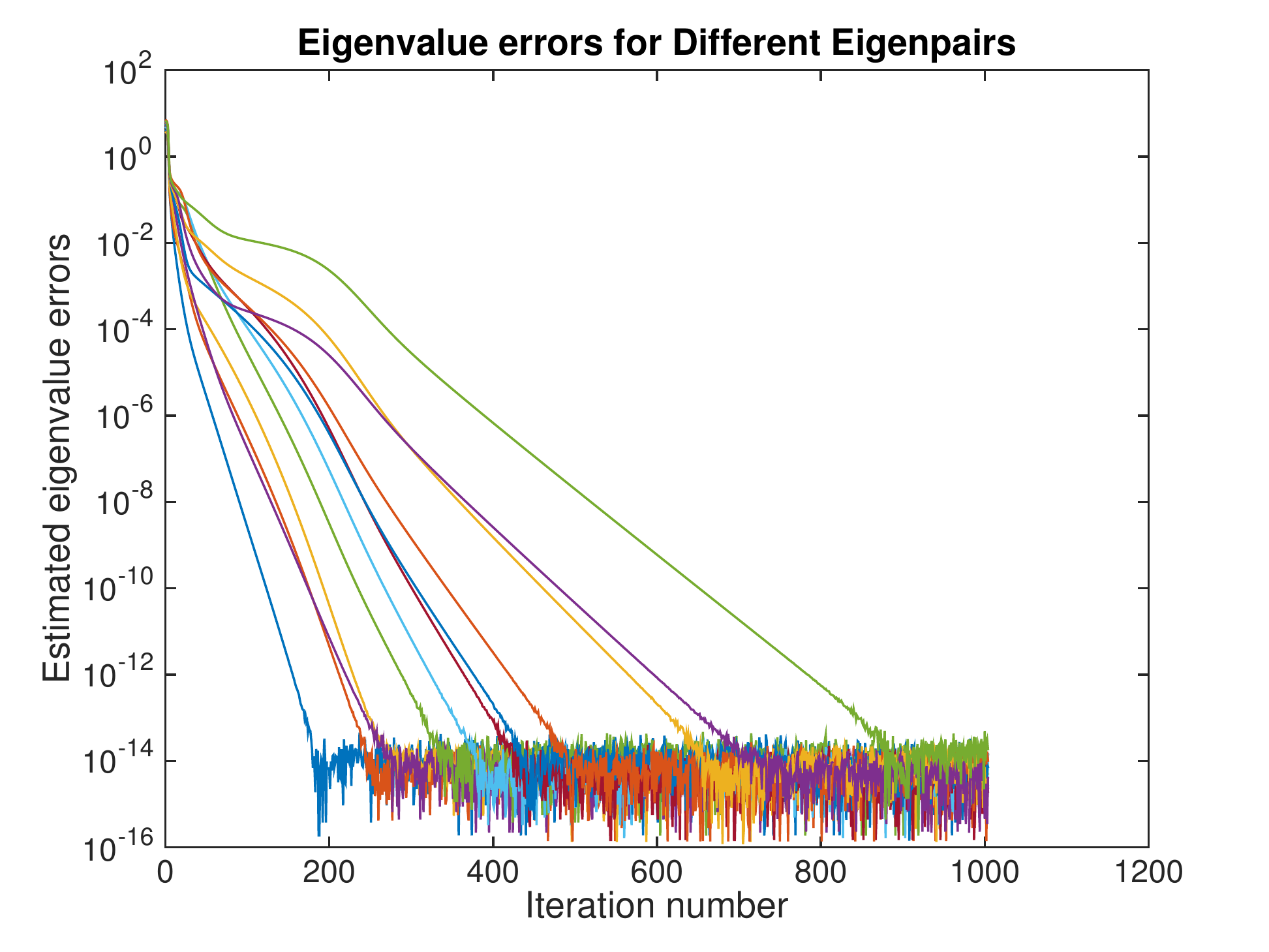}
\caption{Convergence history for first 12 eigenvalues. Left: Gamblet based method Right: Geometric mutligrid based method. The iteration number corresponds to the number of correction steps, namely, the outer iteration number. The first a few iterations are on the coarse level $k = 3, \dots, q-1$, and the following iterations are on the finest level $k = q$.  }
\label{fig:historya7}
\end{figure}

\subsection{\revise{Gamblet Preconditioned LOBPCG Method}}

\revise{In the previous sections, we have proposed the Gamblet based multilevel correction scheme, proved its convergence and numerically demonstrated its performance. In this section, we will show that Gamblets can also be used as an efficient preconditioner for existing eigensolvers. To be precise, we construct the Gamblet based preconditioner for the Locally Optimal Block Preconditioned Conjugate Gradient (LOBPCG) method \cite{Kynazev:2001,Knyazev:2003}, which is a class of widely used eigensolvers}.

A variety of Krylov subspace-based methods are designed to solve a few extreme eigenvalues of symmetric positive matrix \cite{Sorensen:1997,Dyakonov:1980,Bramble:1996,Knyazev:1998,Kynazev:2001,Knyazev:2003,Bai:2000}. Many studies have shown that LOBPCG is one of the most effective method at this task \cite{Knyazev:2017,Duersch:2018} and there are various recent developments of LOBPCG for indefinite eigenvalue problems \cite{Bai:2012}, nonlinear eigenvalue problems \cite{Szyld:2016}, electronic structure calculation \cite{Vecharynski:2015}, and tensor decomposition \cite{Rakhuba:2016}.
The main advantages of LOBPCG are that the costs per iteration and the memory use are competitive with those of the Lanczos method,
linear convergence is theoretically guaranteed and practically observed, it allows utilizing highly efficient matrix-matrix operations, e.g., BLAS 3, and it can directly take advantage of preconditioning, in contrast to the Lanczos method.

LOBPCG can be seen as a generalization of the Preconditioned Inverse Iteration (PINVIT) method \cite{Dyakonov:1980,Bramble:1996,Knyazev:1998}.
The PINVIT method \cite{Dyakonov:1980,Bramble:1996,Knyazev:1998,Kynazev:2001,Knyazev:2003}, can be motivated as an inexact Newton-method for the minimization of the Rayleigh quotient. The Rayleigh quotient $\mu(x)$ for a vector $x$ and a symmetric, positive definite matrix $M$ is defined by
\begin{displaymath}
	\mu(x,M):=\mu(x)= \frac{x^TMx}{x^Tx}	
\end{displaymath}
The global minimum of $\mu(x)$ is achieved at $x = v_1$, with $\lambda_1= \mu(x)$, where $(\lambda_1, v_1)$ is the eigenvalue pair of $M$ corresponding to the smallest eigenvalue $\lambda_1$. This means that minimizing the Rayleigh quotient is equal to computing the smallest eigenvalue. With the following inexact Newton method:
\begin{align*}
	w_{i} &= B^{-1}(Mx_i-\mu(x_i)x_i), \\
	x_{i+1}  &= x_{i}-w_{i}.
\end{align*}
we get the  preconditioned inverse iteration (PINVIT). The preconditioner
$B$ for $M$ have to satisfy $\|I-B^{-1}M\|_{􏱼􏱼M} \leq c <1$. The inexact Newton method can be relaxed by adding a step size $\alpha$
\begin{displaymath}
	x_{i+1} =x_{i}-\alpha_i w_i,
\end{displaymath}
Finding the optimal step size $\alpha_i$ is equivalent to solving the a small eigenvalue problem with respect to $M$ in the subspace $\{x_i, w_i\}$. In \cite{Kynazev:2001} Knyazev used the optimal vector in the subspace $\{x_{i-1}, w_i, x_i \}$ as the next iterate. The resulting method is called locally optimal (block) preconditioned conjugate method (LOBPCG).

In the following comparison, we adopt the Matlab implementation of LOBPCG by Knyazev \cite{Knyazev:2015}. We use the gamblet based multigrid  as a preconditioner in the LOBPCG method, and compare its performance for SPE10 example with \revise{geometric multigrid  preconditioned CG (GMGCG)} and general purpose ILU based preconditioner, the results are shown in Figure \ref{fig:lobpcg}. It is clear that the gamblet preconditioned LOBPCG as well as the gamblet multilevel correction scheme (see Figure \ref{fig:lobpcg}) have better performance than the GMGCG or ILU preconditioned LOBPCG \revise{in terms of iteration number. The Gamblet based LOBPCG converges with the accuracy (residuals) of about $10^{-15}$, with 56 iterations in about 30 seconds (in addition, the precomputation of the Gamblets costs about 18 seconds). While the GMG preconditioned LOBPCG in Figure \ref{fig:lobpcg} fails to converge in 1000 iterations, and the residuals are above $10^{-5}$ when it is stopped at 1000 iterations in about 60 seconds. Although our implementation in Matlab is not optimized in terms of speed, the above observations indicate that the Gamblet preconditioned LOBPCG has potential to achieve even better performance with an optimized implementation.}

\begin{Remark}
The LOBPCG method has a larger subspace for the small Rayleigh-Ritz eigenvalue problem, compared with the multilevel correction scheme in \eqref{Eigen_Augment_Problem}. This could be the reason why the gamblet preconditinoed LOBPCG scheme has fewer (but comparable) outer iterations compared with the multilevel correction scheme shown in Figure \ref{fig:historya8}.  \revise{On the other hand, orthogonalization is crucial for a robust implementation of LOBPCG, and adaptive stopping criteria needs to be used for efficiency. Delicate strategies \cite{Duersch:2018} are proposed in order to ensure the robustness of LOBPCG. Comparing with LOBPCG, the Gamblet based multilevel correction scheme appears to be very robust in our numerical experiments: we only solve an eigenvalue problem at the coarsest level, and still achieve an accuracy of $10^{-14}$ without using any adaptive stopping criteria, for example, see Figure \ref{fig:historya8}}.
\end{Remark}

\begin{figure}[H]
    \centering
    \begin{subfigure}[b]{0.45\textwidth}
        \centering
        \includegraphics[width=\textwidth]{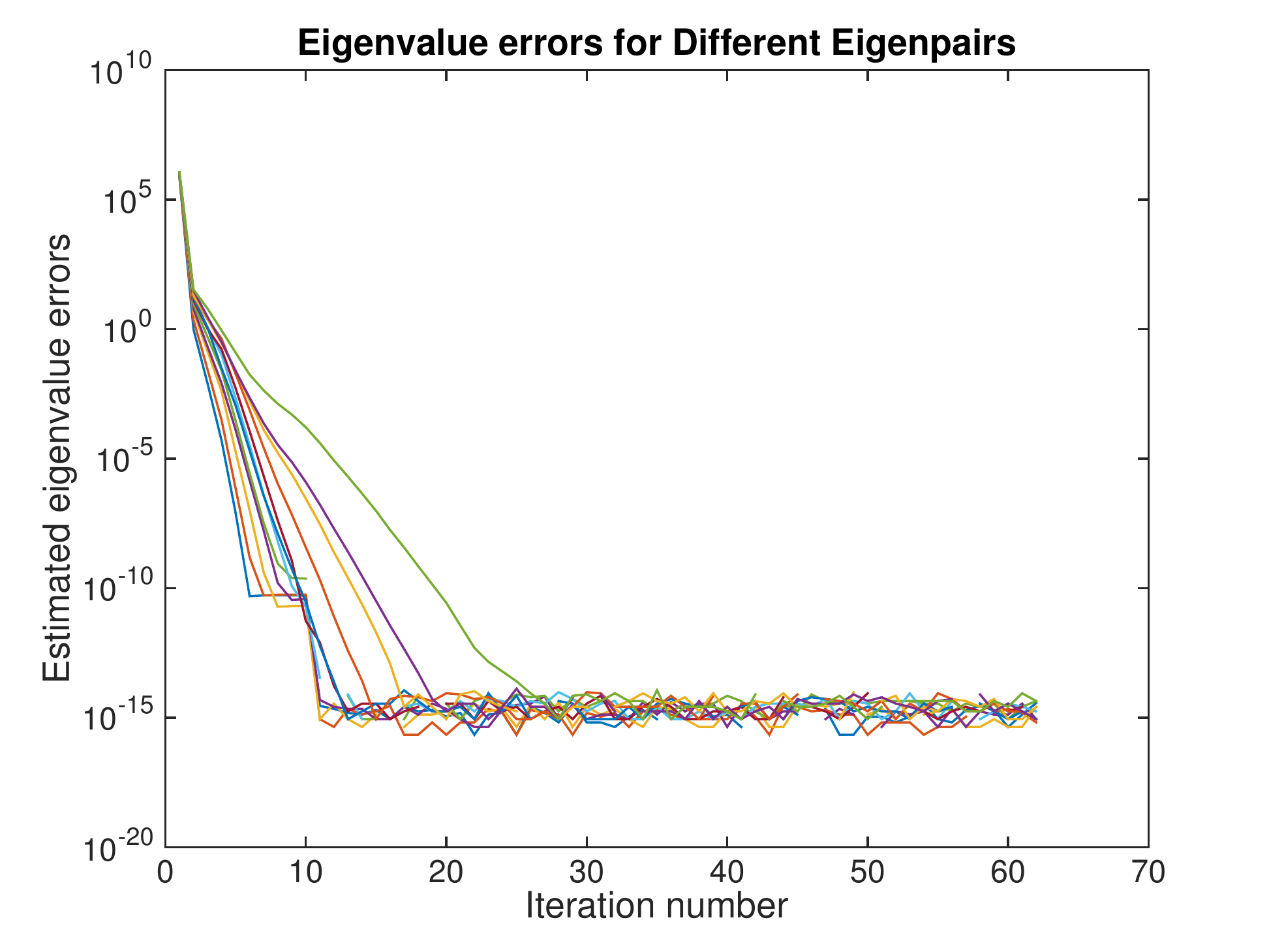}
        \caption{{\footnotesize Eigenvalue errors for the gamblet preconditioned LOBPCG;}}
        \label{fig:eigerr_gamblet_lobpcg}
    \end{subfigure}
    \hfill
    \begin{subfigure}[b]{0.45\textwidth}
        \centering
        \includegraphics[width=\textwidth]{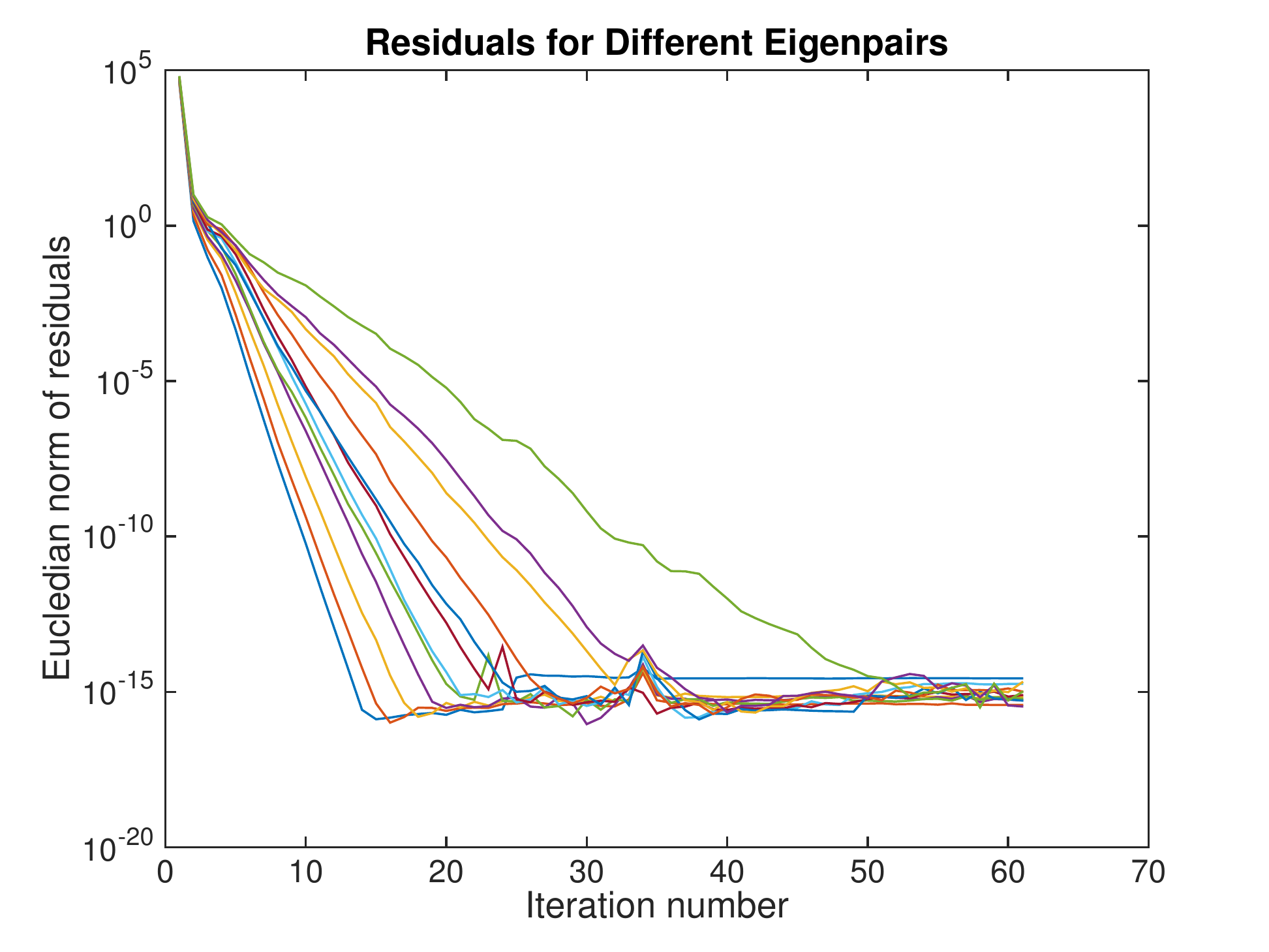}
        \caption{{\footnotesize Residuals for the gamblet preconditioned LOBPCG;}}
        \label{fig:residual_gamblet_lobpcg}
    \end{subfigure}
    \vskip\baselineskip
    \begin{subfigure}[b]{0.45\textwidth}
        \centering
        \includegraphics[width=\textwidth]{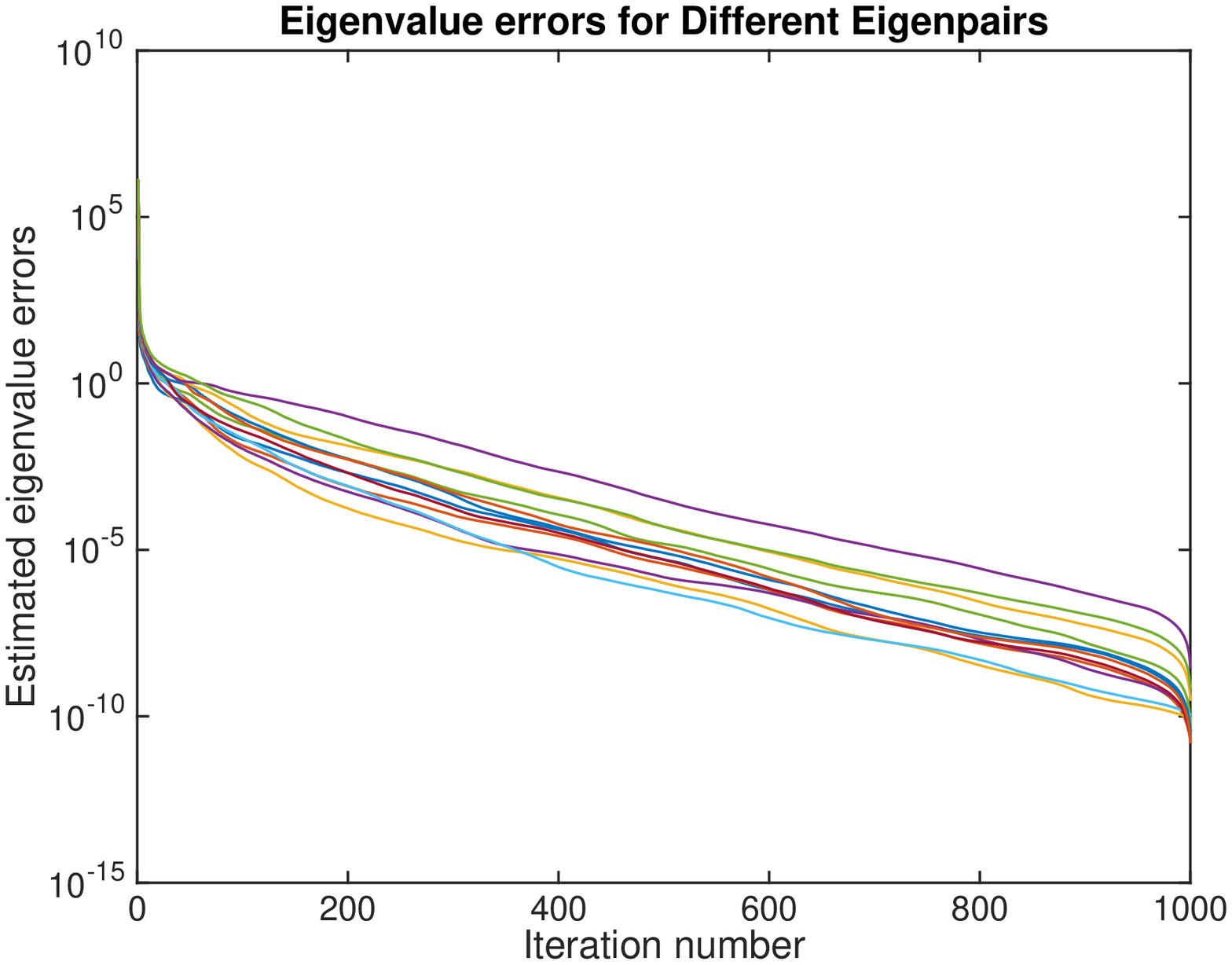}
        \caption{{\footnotesize Eigenvalue errors for the GMGCG preconditioned LOBPCG;}}
        \label{fig:eigerr_ilu_lobpcg}
    \end{subfigure}
    \quad
    \begin{subfigure}[b]{0.45\textwidth}
        \centering
        \includegraphics[width=\textwidth]{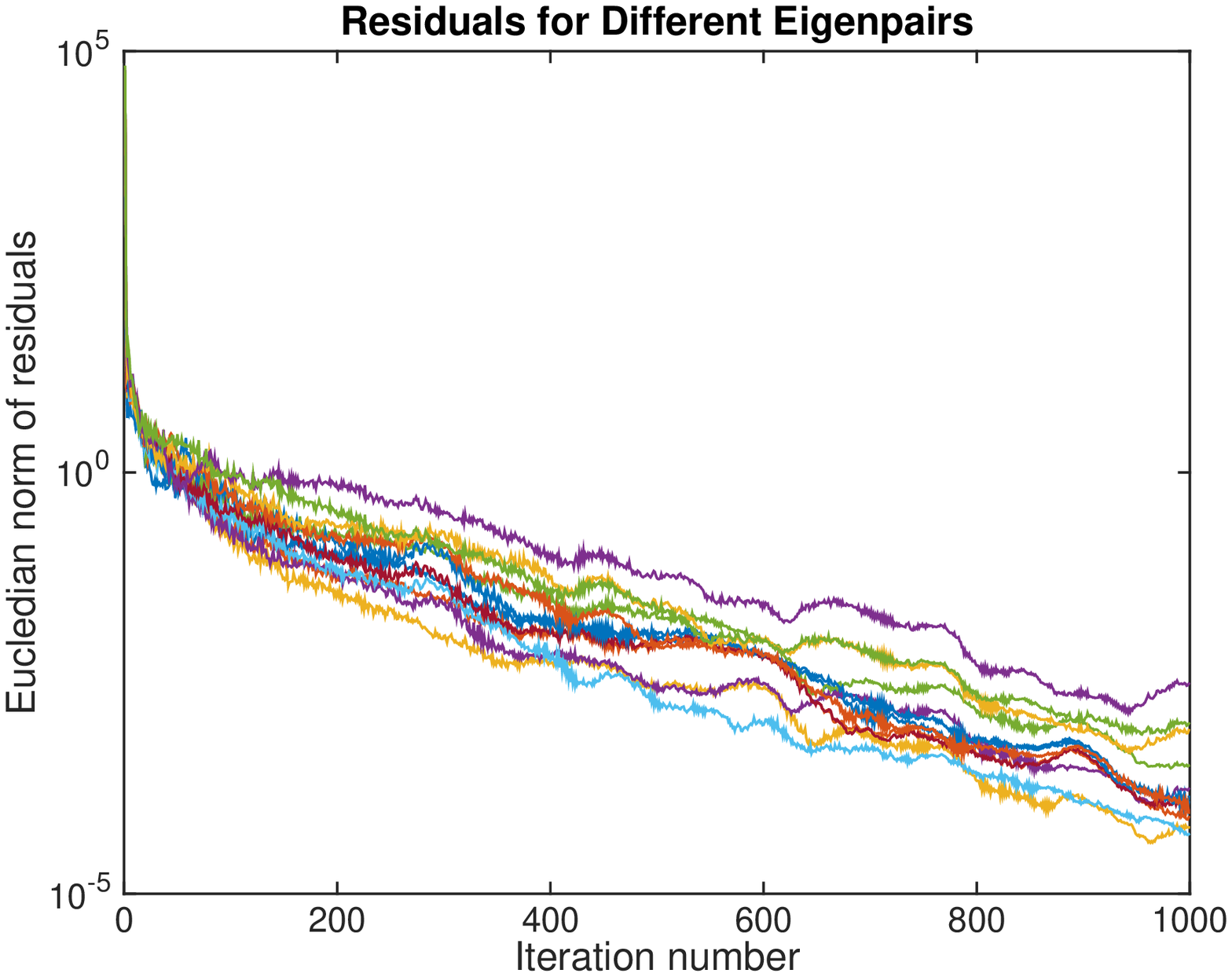}
        \caption{{\footnotesize Residuals for the GMGCG preconditioned LOBPCG.}}
        \label{fig:residual_ilu_lobpcg}
    \end{subfigure}
    \vskip\baselineskip
    \begin{subfigure}[b]{0.45\textwidth}
        \centering
        \includegraphics[width=\textwidth]{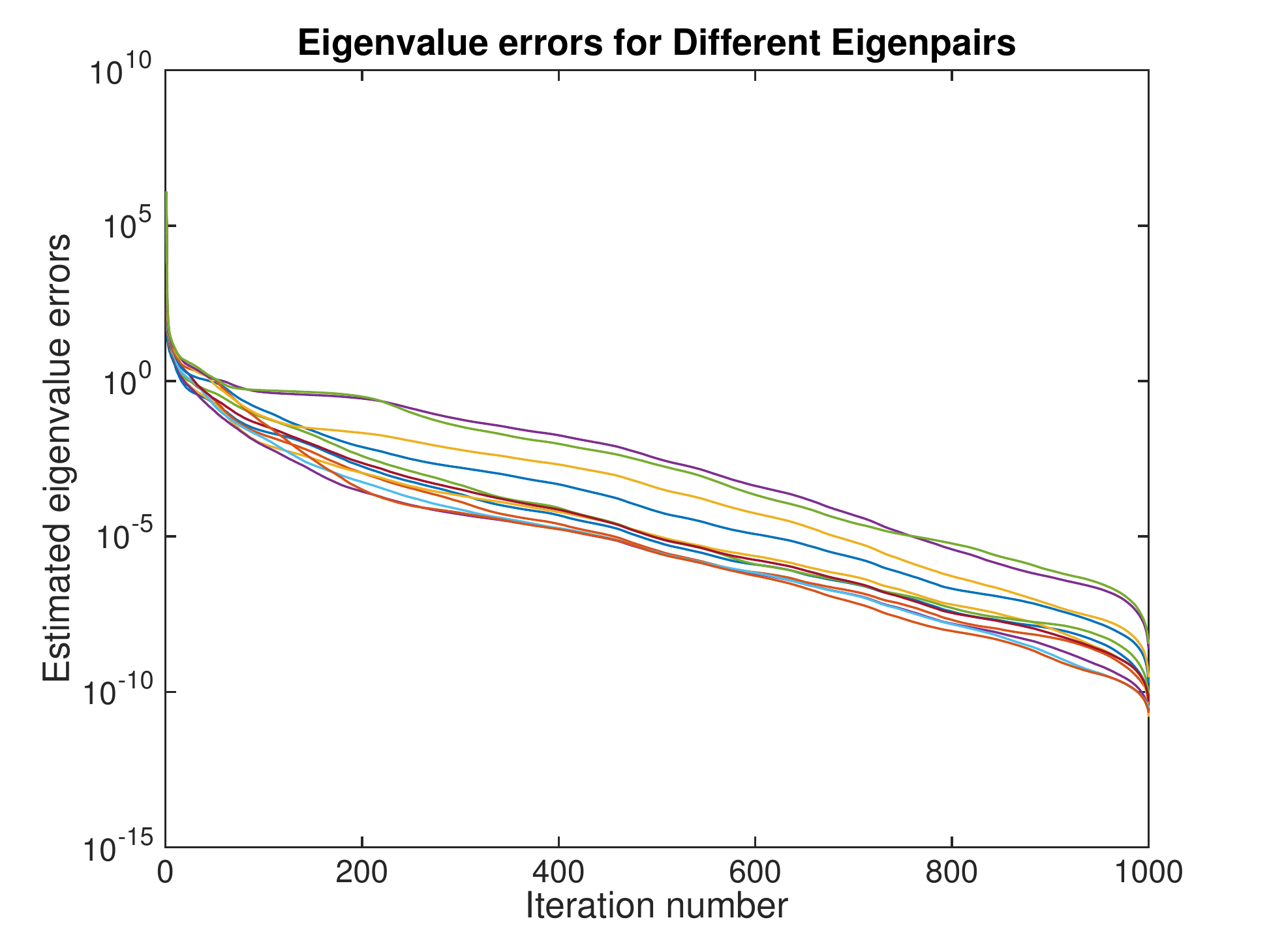}
        \caption{{\footnotesize Eigenvalue errors for the ILU preconditioned LOBPCG;}}
        \label{fig:eigerr_ilu_lobpcg}
    \end{subfigure}
    \quad
    \begin{subfigure}[b]{0.45\textwidth}
        \centering
        \includegraphics[width=\textwidth]{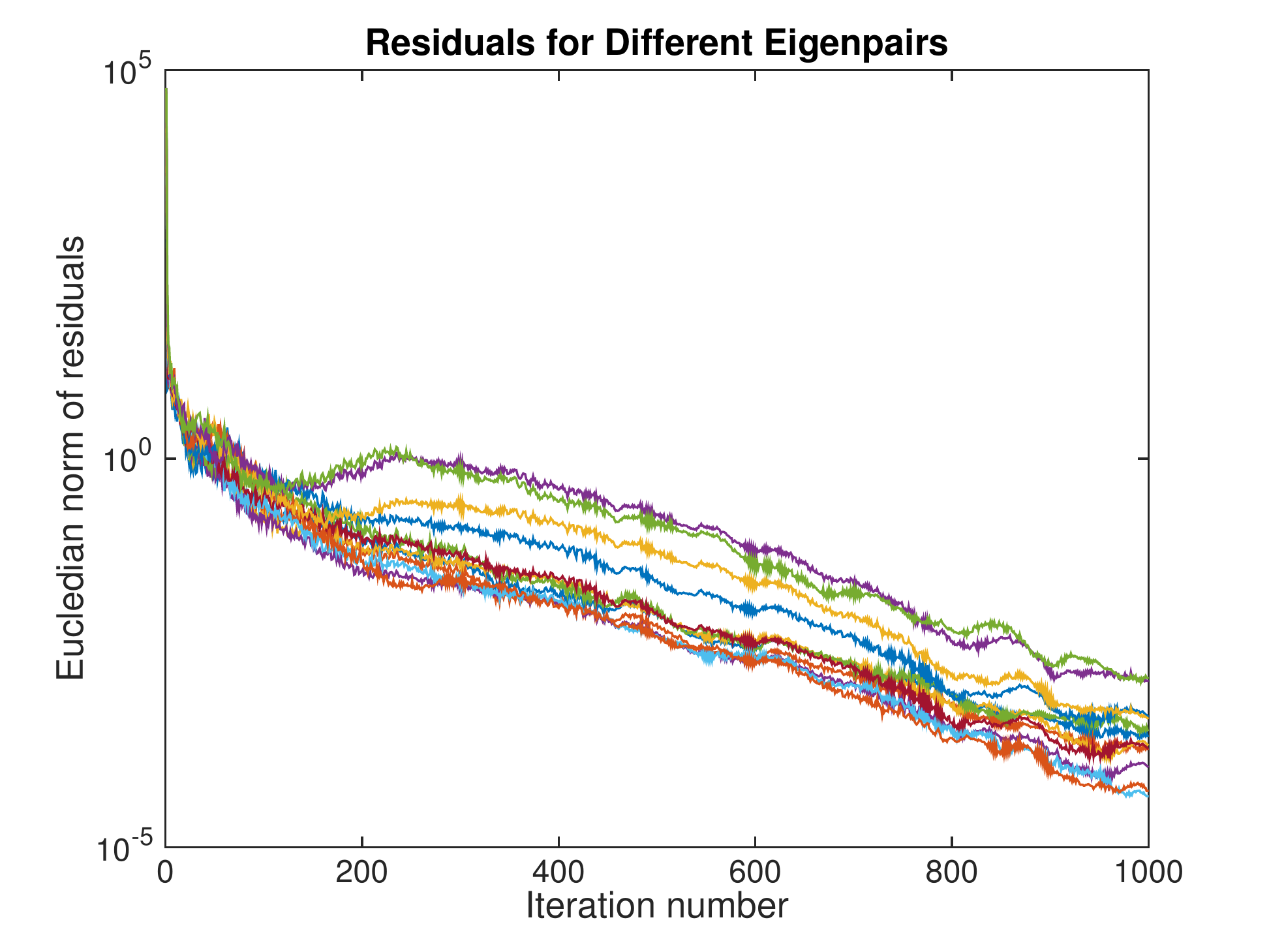}
        \caption{{\footnotesize Residuals for the ILU preconditioned LOBPCG.}}
        \label{fig:residual_ilu_lobpcg}
    \end{subfigure}
    \caption{\small Eigenvalue errors and residuals for the first 12 eigenpairs of the eigenvalue problems for SPE 10 case. Top row: Gamblet preconditioned LOBPCG. Middle row: geometric multigrid preconditioned LOBPCG. Bottom row: ILU preconditioned LOBPCG (using Matlab command \emph{ichol(A,struct('michol','on')))}.}
    \label{fig:lobpcg}
\end{figure}

\paragraph{Combination of Multilevel Correction with LOBPCG}
\revise{We noticed that a ``good" initial value is important for the convergence of the LOBPCG method.   }
Therefore, we propose to combine the multilevel correction scheme and LOBPCG to derive a hybrid method. In this combination, the gamblet based multilevel correction scheme is used to compute, to a high accuracy, an initial approximation for the eigenpairs for the gamblet preconditioned LOBPCG scheme. We use this combined method to solve the so-called Anderson Localization eigenvalue problem in the following subsection. Since LOBPCG is based on the so-called Ky Fan trace minimization principle, at each step the sum of the eigenvalues are minimized \cite{Kressner:2014}. Therefore the convergence rate of different eigenvalues will be balanced.

\subsubsection{Anderson localization}
\label{sec:anderson}
\def\H{\mathcal{H}}

Consider the linear Schr\"{o}dinger operator $\H:=-\Delta + V(x)$ with disorder potential $V(x)$ (as presented  in \cite{Altmann:2018}) whose Anderson localization  \cite{Anderson:1958} properties are analyzed in  \cite{Arnold:2016} and in \cite{Altmann:2018} (see \cite{Billy:2008} and references therein for the ubiquity and importance of localization in wave physics).

\def\ve{\varepsilon}
Let $\Omega:=[-1,1]^2$ be the domain of the operator. By \cite{Altmann:2018}, $V(x)$ is a disorder potential that vary randomly between two values $\ds\beta \geq \frac{1}{\ve^2} \gg \alpha$ on a small scale $\ve$. In the numerical experiment, we choose $\ve = 0.01$, $\beta = 10^4$, and $\alpha = 1$ (the eigenvalue problem becomes more difficult as  $\ve$ becomes smaller). See Figure \ref{fig:historya1c5} for results using the Gamblet based multilevel correction method, Gamblet preconditioned LOBPCG, and the hybrid method.

\begin{figure}[H]
\centering
\includegraphics[width = 0.3\textwidth]{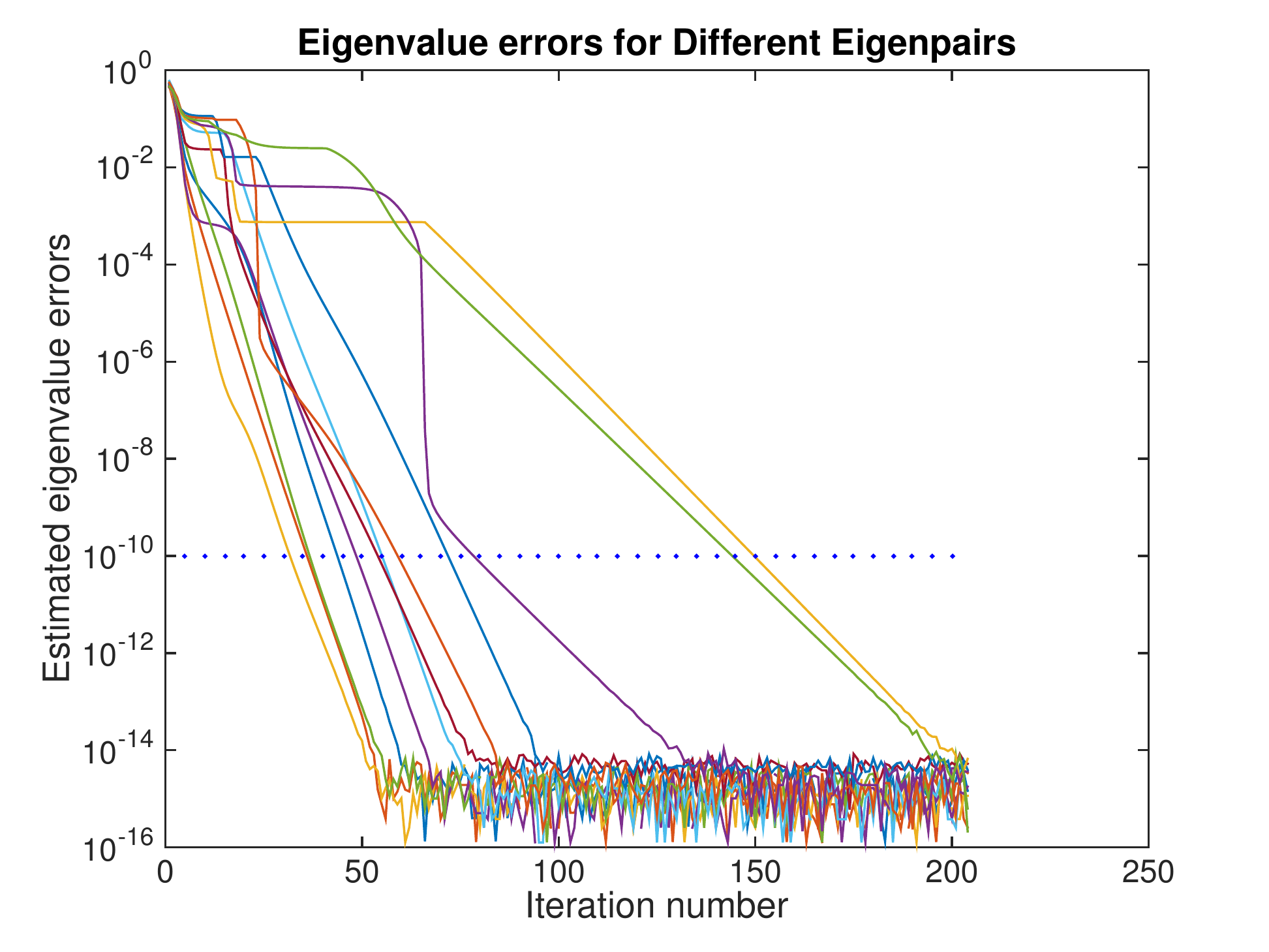}
\includegraphics[width = 0.3\textwidth]{./history_lobpcg_a1c5}
\includegraphics[width = 0.3\textwidth]{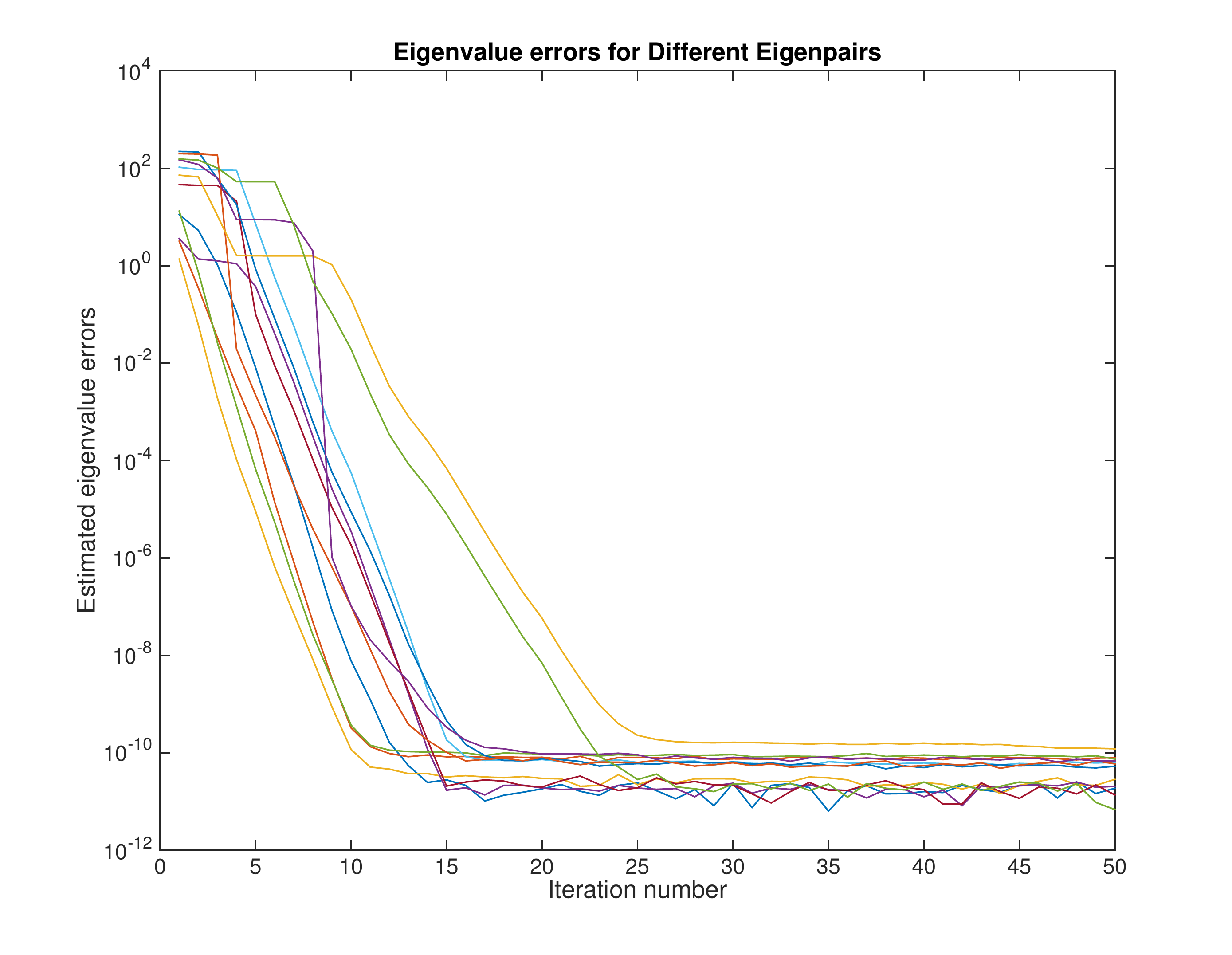}
\caption{Convergence history for first 12 eigenvalues: Left, using the gamblet based multilevel correction method; Middle: using the gamblet preconditioned LOBPCG method; Right, using the hybrid method, namely, generating the initial approximation by the gamblet based multilevel method, then preforming the gamblet preconditioned LOBPCG method until convergence. The iteration number corresponds to the number of correction steps, namely, the outer iteration number. The first a few iterations are on the coarse level $k = 3, \dots, q-1$, and the following iterations are on the finest level $k = q$.  }
\label{fig:historya1c5}
\end{figure}

\section*{Acknowledgement}
HX was partially supported by Science Challenge Project (No. TZ2016002),
National Natural Science Foundations of China (NSFC 11771434, 91330202),
the National Center for Mathematics and Interdisciplinary Science, CAS.
LZ was partially supported by National Natural Science Foundations of China (NSFC 11871339, 11861131004, 11571314). HO gratefully acknowledge support from  the Air Force Office of Scientific Research and the DARPA EQUiPS Program under award number FA9550-16-1-0054 (Computational Information Games) and  the Air Force Office of Scientific Research under award number
FA9550-18-1-0271 (Games for Computation and Learning). We thank Florian Schaefer for stimulating discussions. We thank two anonymous reviewers whose comments have greatly improved this manuscript.

\bibliographystyle{plain}
\bibliography{nh}

\end{document}